\author{
Fazia {\sc Bedouhene}\footnote{Mouloud Mammeri University of Tizi-Ouzou,
Laboratoire de Math\'ematiques Pures et Appliqu\'ees, Tizi-Ouzou, Algeria
E-Mail: fbedouhene@yahoo.fr
ORCID ID 0000-0002-2664-2445
},
Nouredine {\sc Challali}\footnote{
Mouloud Mammeri University of Tizi-Ouzou,
Laboratoire de Math\'ematiques Pures et Appliqu\'ees, Tizi-Ouzou, Algeria
E-Mail: challalin@yahoo.fr
ORCID ID 0000-0002-6965-5086
},
Omar {\sc Mellah}
\footnote{Mouloud Mammeri University of Tizi-Ouzou,
Laboratoire de Math\'ematiques Pures et Appliqu\'ees, Tizi-Ouzou, Algeria
E-Mail: omellah@yahoo.fr
ORCID ID 0000-0002-3042-9719
},\\
Paul {\sc Raynaud de Fitte}
\footnote{Normandie Univ, Laboratoire Rapha\"el Salem,
UMR CNRS 6085, Rouen, France
E-Mail: prf@univ-rouen.fr
ORCID ID 0000-0001-5527-9393},
and
Mannal {\sc Smaali}\footnote{
Mouloud Mammeri University of Tizi-Ouzou,
Laboratoire de Math\'ematiques Pures et Appliqu\'ees, Tizi-Ouzou, Algeria
E-Mail: smaali\_manel@yahoo.fr
ORCID ID 0000-0002-4556-8930},
}
\title{Almost periodic solution in distribution for stochastic differential equations with Stepanov
 almost periodic coefficients}
\newtheorem{theo}{Theorem}[section]
\newtheorem{prop}[theo]{Proposition}
\newtheorem{cor}[theo]{Corollary}
\newtheorem{lemma}[theo]{Lemma}
\newtheorem{definition}[theo]{Definition}
\newtheorem{remark}[theo]{Remark}
\newtheorem{example}[theo]{Example}
\def\cprime{$'$}
\renewcommand\epsilon{\varepsilon}
\newcommand\N{\mathbb{N}}
\newcommand\R{\mathbb{R}}
\newcommand\cS{\mathcal{S}}
\newcommand\un[1]{\,\rlap{{\rm 1}}\kern.22em \mbox{\rm l}_{#1}} 
\newcommand\meas{\mathop{\text{\rm meas}}  } 
\newcommand\proof{\noindent {\bf Proof}\ }
\newcommand\proofof[1]{\noindent{\bf Proof of {#1}}\ }
\renewcommand\square{\fbox{\rule{0em}{.3em}\rule{.3em}{0em}} \qquad}
\newcommand\finpr{\hfill$\square\qquad$\medskip\par}
\newcommand\tq{;\,} 
\newcommand\CCO[1]{\left( #1 \right)}
\newcommand\norm[1]{\left\Vert #1 \right\Vert}
\newcommand\abs[1]{\left\vert #1 \right\vert}
\newcommand\accol[1]{\left\{#1\right\}}
\newcommand\expect{\mathop{\text{\rm E}}\nolimits}
\newcommand\loi{\mathop{\text{\rm law}}}
\newcommand\law[1]{ \loi({#1}) } 
\newcommand\lr{\mbox{\tiny\rm L}}
\newcommand\ellp[1]{\mathop{\text{\rm L}}\nolimits^{#1}}
\newcommand\elp[1]{{\mathfrak{L}}^{#1}} 
\newcommand\trace{\mathop{\mbox{\rm tr}}}
\newcommand\Dom{\mathop{\mbox{\rm Dom}}}
\newcommand\esprob{\Omega}
\newcommand\tribu{\mathcal{F}}
\newcommand\prob{\mathop{\text{\rm P}}\nolimits}
\newcommand\laws[1]{{\mathcal M}^{1,+}\CCO{{#1}}}
\newcommand\transl[1]{\widetilde{#1}}
\newcommand\C{\mathcal{C}}
\newcommand\h{\mathbb{H}}
\newcommand\CUB{\mbox{\rm CUB}}
\newcommand\espmes{\mathbb{U}}
\newcommand\trmes{\Sigma}
\newcommand\elmes{u}
\newcommand\fgg{\tilde{F}}
\newcommand\ff{F'}
\newcommand\fg{F}
\newcommand\mdense{\mathcal{D}}
\newcommand\mnegl{\mathcal{N}}
\newcommand\espB{\mathbb{B}}
\newcounter{deplus}
\newcommand\fK{K'}
\newcommand\fKg{K}
\newcommand\elmess{v}
\newcommand\Mmes{A}
\newcommand\fgC{G}
\newcommand\fKgC{L}
\newcommand\D{\mathcal{D}} 
\newcommand\bl{\mbox{\tiny\rm BL}}
\newcommand\WASS{\mathop{\mbox{\textrm{Wass}}}\nolimits}
\newcommand\ER{\mathcal{E}}     
\newcommand\AP{\mbox{\rm{AP}}}
\newcommand\StER[2][]{\St^{#2}_{#1}\mathcal{E}}
\newcommand\PAP{\mbox{\textrm{PAP}}}
\newcommand\StPAP[2][]{{\St^{#2}_{#1}}\mbox{\textrm{PAP}}}
\newcommand\StAP[2][]{\St^{#2}_{#1}\!\mbox{\rm{AP}}} 
\newcommand\StAPUb[2][]{\St^{#2}\!\rm{APU}_b} 
\newcommand\StAPUc[2][]{\St^{#2}\!\rm{APU}_c} 
\newcommand\StPAPUc[2][]{\St^{#2}\rm{PAPU}_c} 
\newcommand\Cont{\text{\rm C}} 
\newcommand\BCont{\text{\rm BC}} 
\newcommand\StERUc[2][]{\St^{#2}\mathcal{E}\mbox{\textrm{U}}_c}  
\newcommand\PAPUc{\mbox{\textrm{PAPU}}_c}
\newcommand\APUc{\mbox{\textrm{APU}}_c}
\newcommand\APUb{\mbox{\textrm{APU}}_b} 
\newcommand\APD{\mbox{\textrm{APD}}} 
\newcommand\APOD{\mbox{\textrm{APD}}_1}
\newcommand\PAPD{\mbox{\textrm{PAPD}}}
\newcommand\St{\mathbb{S}} 
\newcommand\espE{{\mathbb E}}
\newcommand\espX{{\mathbb X}}
\newcommand\espY{{\mathbb Y}}
\newcommand\espL{\textrm{L}} 
\newcommand\espLH{\espL_2} 
\newcommand\lip{\rm{(Lip)}}
\newcommand{\verti}[1]{{\left\vert\kern-0.25ex\left\vert\kern-0.25ex\left\vert #1
    \right\vert\kern-0.25ex\right\vert\kern-0.25ex\right\vert}}
\newcommand\Trunc[2][1]{{\lfloor {#2},x_{0}\rfloor}} 
\newcommand\ii{\tau}
\begin{document}

\maketitle

\begin{abstract}
This paper deals with the existence and uniqueness of ($\mu$-pseudo) almost periodic mild solution to some evolution equations with Stepanov ($\mu$-pseudo) almost periodic coefficients, in both determinist and stochastic cases. After revisiting some known concepts and properties of Stepanov ($\mu$-pseudo) almost periodicity in  complete metric space, we consider a semilinear stochastic evolution equation  on a Hilbert separable space with  Stepanov ($\mu$-pseudo) almost periodic coefficients. We show existence and uniqueness of the mild solution which is ($\mu$-pseudo) almost periodic in $2$-distribution. We also generalize a result by Andres and Pennequin, according to which there is no purely Stepanov almost periodic solutions to differential equations with Stepanov almost periodic coefficients.
\end{abstract}

Keywords : Weighted pseudo almost periodic; Stepanov almost periodic; Stochastic evolution equations; Pseudo almost periodic in $2$-UI distribution


\section{Introduction}

The concept of Stepanov almost periodicity, which is the central issue in this paper, was first introduced in the literature by Stepanov \cite{Stepanoff1926}, and is a natural generalization of the concept of almost periodicity in Bohr's sense. Important contributions upon such concept where subsequently made by N. Wiener \cite{Wiener1926}, P. Franklin \cite{Franklin1929}, A. S. Besicovitch \cite{Besicovitch-1955-book}, B. M. Levitan and V. V. Zhikov \cite{Levitan53,Levitan_Zhikov82}, Amerio and Prouse \cite{Amerio-Prouse-1971-book}, S. Zaidman \cite{Zaidmann71_Existence_Stepanov}, A. S. Rao \cite{rao_almost_1999}, C. Corduneanu \cite{Corduneanu89book}, L. I. Danilov \cite{Danilov-1997,Danilov98-uniform-approximation}, S. Stoi{\'n}ski \cite{stoinski_remarks_1996,Stoinski_AP_Lebesgue_measure94}, J. Andres, A. M. Bersani, G. Grande, K. Lesniak \cite{Andres99,andres_almost-periodicity_2001,Andres-Bersani-Grande-06-Hierarchy,Andres_Bersani_Lesniak_01_APvarious_metric}.  Outside the field of harmonic analysis, a substantial application of Stepanov almost periodicity lies in the theory of differential equations \cite{Nemytskii_Stepanov_book_60}. In this context, the theory of dynamical systems is pertinent and, in particular, in the study of various kinds and extensions of almost periodic and (or) almost automorphic motions. This is due mainly to their importance and applications in physical sciences. One can mention e.g. T. Diagana \cite{diagana06weighted,diagana08weighted,diagana09stepanov,diagana_existence_2009_SAA}, J. Blot et al. \cite{blot-al09superposition,blot-cieutat_ezzinbi2012,blot-cieutat-ezzinbi2013,blot-mophou-nguerekata-pennequin2009}, G. M. N'Gu{\'e}r{\'e}kata et al. \cite{nguerekata05book,nguerekata-pankov08stepanov}, J. Andres and D. Pennequin \cite{Andres-Pennequin2012,andres_stepanov_2012}, Z. Hu and A. B. Mingarelli \cite{hu_boundeness_2005,Hu-Mingarelli08,hu_stepanov-like_2012}.

Though there has been a significant attention devoted to the theory of Stepanov almost periodicity in the deterministic case, there are few works related to the notion of Stepanov almost periodicity for stochastic processes. To our knowledge, the first work dedicated to Stepanov almost periodically correlated (APC) processes is due to L. H. Hurd and A. Russek \cite{hurd_stepanov_1996}, where Gladyshev's characterization of
APC correlation functions was extended to Stepanov APC processes. In the framework of Stochastic differential equation, Bezandry and Diagana \cite{Bezandry-Diagana2008_Step_quad} introduced the concept of Stepanov almost periodicity in mean-square. Their aim was to prove, under some conditions, existence and
uniqueness of Stepanov (quadratic-mean) almost periodic solution for a class
of nonautonomous stochastic evolution equations on a separable real Hilbert
space. This paper was the starting point of other works on stochastic differential equations with Stepanov-like ($\mu$-pseudo) almost periodic (automorphic) coefficients (see, e.g. \cite{chang_stepanov-like_2011,chang_stepanov-like_2016,yan_existence_2015,tang_stepanov-like_2014}). Unfortunately, the claimed results are erroneous \cite{bed-chal-mel-prf-sma2015,MRF13}.

The motivation of this paper has two sources. The first one comes from the papers by Andres and Pennequin \cite{Andres-Pennequin2012,andres_stepanov_2012}, who show the nonexistence of purely Stepanov-almost periodic solutions of ordinary
differential equations in uniformly convex Banach spaces. The second one comes from our paper \cite[Example~3.1]{bed-chal-mel-prf-sma2015}, where, with the simple counterexample of Ornstein-Uhlenbeck process, we have shown that even a one-dimensional linear equation with constant coefficients has no nontrivial solution which is Stepanov almost periodic in mean-square.

In this paper, we revisit the question of existence and uniqueness of Stepanov almost periodic solutions in both deterministic and stochastic cases. More precisely, we consider two semilinear stochastic evolution equations in a Hilbert space. The first one has  Stepanov almost periodic coefficients, and the second one has Stepanov $\mu$-pseudo almost periodic coefficients. We show that each equation has a unique mild solution which is almost periodic in $2$-UI distribution in the first case, and $\mu$-pseudo almost periodic in 2-UI distribution in the second case. Our results generalize and complete those of Da Prato and Tudor~\cite{DaPrato-Tudor95}, and those obtained recently by Kamenskii et al.~\cite{KMRF12averaging}, and Bedouhene et al.~\cite{bed-chal-mel-prf-sma2015}. We also show, by mean of a new superposition theorem in the deterministic case, the nonexistence of purely Stepanov $\mu$-pseudo almost periodic solutions to some evolution equations, generalizing a result of Andres and Pennequin \cite{Andres-Pennequin2012}.

The rest of the paper is organized as follows. In Section~\ref{sec:definition}, we investigate several notions of Stepanov almost periodicity in Lebesgue measure, and Stepanov ($\mu$-pseudo) almost periodicity for metric-valued functions. We see in particular that almost periodicity in Stepanov sense depends on the uniform structure of the state space. Special attention is paid to superposition operators between the spaces of Stepanov ($\mu$-pseudo) almost periodic metric-valued functions. Section~\ref{sec:SDE} is the main part of this paper. Therein, we study existence and uniqueness of bounded mild solutions to the abstract semilinear stochastic evolution equation on a Hilbert separable space
\begin{equation*}
dX(t)=AX(t)dt+F(t,X(t))dt+G(t,X(t))dW(t),
\end{equation*}%
where $F$ and $G$ are Stepanov ($\mu$-pseudo) almost periodic, satisfying Lipschitz and  growth conditions.  In the case of uniqueness, the solution can be ($\mu$-pseudo) almost periodic in $2$-UI distribution. Our approach is inspired from Kamenskii et al.~\cite{KMRF12averaging}, Da Prato and Tudor \cite{DaPrato-Tudor95}, and  Bedouhene et al.~\cite{bed-chal-mel-prf-sma2015}. The major difficulty is the treatment of the limits $F^\infty$ and $G^{\infty}$ provided by the Bochner criterion in Stepanov sense applied to $F$ and $G$  respectively. Thanks to an application of Komlós's theorem \cite{komlos67gene}, this difficulty dissipates by showing that  $F^\infty$ and $G^{\infty}$ inherit the same properties as $F$ and $G$  respectively.  Finally, Section~\ref{sec:conslusion} is devoted to some remarks and conclusions about the problem of existence of purely Stepanov almost periodic solutions. We show by a simple example in a one-dimensional setting, that one can obtain  bounded purely Stepanov almost periodic solutions when the forcing term is purely Stepanov almost periodic in Lebesgue measure.

\section{Stepanov Almost periodicity and its variants in metric space}\label{sec:definition}
In this section, we present the concept of Stepanov ($\mu$-pseudo) almost periodic function and related concepts like almost periodicity in Lebesgue measure. Moreover, we also recall some useful and key results. We begin with some notations.
\subsection{Notations}\label{sec:notations}
In what follows, $(\espE,d)$ is a complete metric space.
Unless otherwise stated, we keep the notation $d$ to designate the metric of any metric space $\espX$. When $\espX$ is a Banach space, its norm induced by $d$ will be denoted by $\norm{.}$.

Let $\espX$ and $\espY$ be two complete metric spaces, we denote some classical spaces as follows:

 \begin{itemize}
   \item $\Cont(\espX,\espY)$,   the space of continuous functions from $\espX$ to $\espY$;
   \item $\Cont_k(\espX,\espY)$,  the space $\Cont(\espX,\espY)$ endowed with the topology of
uniform convergence on compact subsets of $\espX$;
   \item $\BCont(\espX,\espY)$, the space of bounded continuous functions from $\espX$ to $\espY$;
   \item If $\espY$ is Banach space, we denote by $\CUB\bigl(\R,\espY\bigl)$ the space $\BCont(\R,\espY)$ endowed with the topology of uniform convergence on $\R$ whose norm is noted by $\norm{.}_\infty$.
 \end{itemize}

\subsection{Stepanov and Bohr almost periodicity}
Let us recall some definitions of (Stepanov) almost periodic functions and some key results.

\paragraph{Almost periodicity} Recall that a set $A\subset\R$ is {\em relatively dense} if there exists a real number $\ell>0$, such that $A\cap[a, a+\ell]\neq\emptyset$, for
all $a$ in $\R$.   We say that a continuous function $f:\R\rightarrow \espE$ is
{\em Bohr almost periodic} (or simply {\em  almost periodic}) if for all $\epsilon>0$, the set
 \begin{equation*}
  T(f,\epsilon):=\accol{\tau \in \R,\, \sup_{t\in \R} d \left( f(t), f(t+\tau) \right)<\varepsilon}
 \end{equation*}
 is relatively dense \cite{Amerio-Prouse-1971-book,Besicovitch-1955-book,Corduneanu89book}.
The numbers $\tau \in T(f,\epsilon) $ are called {\em $\epsilon$-almost periods}.
We denote by $\AP(\R,\espE)$ the space of Bohr almost periodic functions.
We have the following criteria for Bohr almost periodicity, established by Bochner \cite{bochner62new_approach} for complex-valued functions, see also \cite{DaPrato-Tudor95} for metric-valued functions:
\begin{theo} [\cite{bochner62new_approach,DaPrato-Tudor95}]Let $f:\R\rightarrow \espE$ be a continuous function. The following conditions are equivalent:
\begin{enumerate}
  \item $f\in \AP(\R,\espE)$.
  \item $f$ satisfies {\em Bochner criterion}, namely, the set $\{f(t+.),\,t\in \R\}$ is relatively compact in  $\BCont(\R,\espE)$ with respect to the uniform metric.
  \item \label{item:Bochner_double_criterion}For every pair of sequences $(\alpha'_n) \subset \R$  and $(\beta'_n) \subset \R$, one can extract common subsequences $ (\alpha_n) \subset (\alpha'_n)$ and $ (\beta_n) \subset (\beta'_n)$ such that \begin{equation}\label{def:Bochner double criterion}
 \lim_{n\rightarrow\infty}\lim_{m\rightarrow\infty}f(t+\alpha_n+\beta_m)=\lim_{n\rightarrow\infty}f(t+\alpha_n+\beta_n)
\end{equation}
pointwise.
\end{enumerate}
\end{theo}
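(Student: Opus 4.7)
My plan is to prove the cycle $(1)\Rightarrow(2)\Rightarrow(3)\Rightarrow(1)$.

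For $(1)\Rightarrow(2)$, I would first observe that any $f\in\AP(\R,\espE)$ is uniformly continuous: given $\varepsilon>0$, choose $\ell>0$ such that every interval of length $\ell$ meets $T(f,\varepsilon/3)$, then transfer continuity of $f$ on the compact set $[-\ell-1,\ell+1]$ to all of $\R$ via almost periods. Given a sequence $(s_n)\subset\R$, I decompose $s_n=\tau_n+r_n$ with $\tau_n\in T(f,\varepsilon)$ and $r_n\in[0,\ell]$; compactness of $[0,\ell]$ together with uniform continuity yields, via diagonalization as $\varepsilon\downarrow 0$, a uniformly Cauchy subsequence of $(f(\cdot+s_n))$ in $\BCont(\R,\espE)$, whence (2).

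For $(2)\Rightarrow(3)$, given $(\alpha'_n)$ and $(\beta'_n)$, I would extract a first subsequence of indices $(k_n)$ so that $f(\cdot+\beta'_{k_n})\to h$ uniformly on $\R$. Since $h$ lies in the compact closure of the hull of $f$, the hull of $h$ is itself precompact, and a further extraction (still relabelled $k_n$) produces $h(\cdot+\alpha'_{k_n})\to g$ uniformly. Setting $\alpha_n=\alpha'_{k_n}$ and $\beta_n=\beta'_{k_n}$, for each fixed $n$ the uniform convergence $f(\cdot+\beta_m)\to h$ evaluated at $t+\alpha_n$ gives $\lim_m f(t+\alpha_n+\beta_m)=h(t+\alpha_n)$, and then $\lim_n h(t+\alpha_n)=g(t)$; for the diagonal, $d(f(t+\alpha_n+\beta_n),h(t+\alpha_n))\leq\sup_s d(f(s+\beta_n),h(s))\to 0$, so $f(t+\alpha_n+\beta_n)\to g(t)$ as well.

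For $(3)\Rightarrow(1)$, I would argue by contradiction: assume $T(f,\varepsilon_0)$ is not relatively dense for some $\varepsilon_0>0$ and extract a sequence $(t_n)$ with $t_n-t_m\notin T(f,\varepsilon_0)$ whenever $n\neq m$, so that for each such pair there is a witness $s_{n,m}\in\R$ with $d(f(s_{n,m}+t_n),f(s_{n,m}+t_m))\geq\varepsilon_0$. Applying (3) with $\alpha'_n=t_n$ and $\beta'_n=-t_n$ collapses the diagonal to the constant $f(t)$, while (3) forces the iterated limit to agree. To convert this pointwise information into a contradiction with the uniform separation encoded by the witnesses, I would first deduce from (3), applied to sequences of small translates, the uniform continuity of $f$, and then show that the hull $\{f(\cdot+s):s\in\R\}$ is both pointwise precompact and equicontinuous, hence precompact in $\Cont_k(\R,\espE)$; an Arzel\`a--Ascoli-type argument combined with the almost-periodic character of the limit functions upgrades this to precompactness in $\BCont(\R,\espE)$, i.e. (2), from which the classical $\varepsilon_0/2$-net and pigeonhole construction yields relatively dense $\varepsilon_0$-periods and the sought contradiction. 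The main obstacle is precisely this last upgrade: the double-limit criterion (3) only delivers pointwise convergence, and bridging the gap to uniform-in-$t$ control over translates of $f$ requires a delicate bootstrap combining continuity of $f$ with a diagonal selection across the witnesses $s_{n,m}$.
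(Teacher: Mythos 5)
The paper does not prove this theorem (it is quoted from Bochner and Da~Prato--Tudor), so your proposal can only be measured against the classical argument. Your implications $(1)\Rightarrow(2)$ and $(2)\Rightarrow(3)$ are correct and standard: the decomposition $s_n=\tau_n+r_n$ with $\tau_n$ an almost period and $r_n\in[0,\ell]$, combined with uniform continuity, is the usual route to precompactness of the hull, and your two-stage extraction for $(2)\Rightarrow(3)$, with the diagonal term controlled by $\sup_s d(f(s+\beta_n),h(s))\to 0$, is sound.

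The genuine gap is in $(3)\Rightarrow(1)$, and you have located it yourself without closing it: everything reduces to upgrading the pointwise limits furnished by (3) to limits that are uniform in $t$, and you leave that upgrade as a ``delicate bootstrap''. The Arzel\`a--Ascoli route cannot supply it, because it only yields precompactness in $\Cont_k(\R,\espE)$, and passing from there to precompactness in $\BCont(\R,\espE)$ is essentially equivalent to the almost periodicity you are trying to prove, so that step is circular. The missing idea is Bochner's own device: condition (3) performs the upgrade if you feed it the points where uniformity fails. Concretely, given a sequence $(\gamma_n)$, apply (3) with $\alpha'_n\equiv 0$ and $\beta'_n=\gamma_n$ to extract a subsequence with $f(t+\gamma_n)\to g(t)$ pointwise. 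If this convergence is not uniformly Cauchy, there exist $\epsilon_0>0$, points $x_n$ and indices $p_n,q_n\to\infty$ with $d\bigl(f(x_n+\gamma_{p_n}),f(x_n+\gamma_{q_n})\bigr)\geq\epsilon_0$. Now apply (3) to the pairs of sequences $(x_n),(\gamma_{p_n})$ and $(x_n),(\gamma_{q_n})$, with a common further extraction; evaluating the equality of iterated and diagonal limits at $t=0$ gives $\lim_n f(x_n+\gamma_{p_n})=\lim_n\lim_m f(x_n+\gamma_{p_m})=\lim_n g(x_n)$, and likewise $\lim_n f(x_n+\gamma_{q_n})=\lim_n g(x_n)$, so the two diagonal limits coincide, contradicting the $\epsilon_0$-separation. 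This proves (2), and then the finite $\epsilon$-net in the hull together with the identity $\sup_t d(f(t+s),f(t+s_i))=\sup_u d\bigl(f(u+s-s_i),f(u)\bigr)$ yields relatively dense sets of $\epsilon$-almost periods, i.e.\ (1); your contradiction scheme with the witnesses $s_{n,m}$ then becomes unnecessary.
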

If condition~\ref{item:Bochner_double_criterion} holds,  we say that $f$ satisfies {\em Bochner's double sequence criterion}. This criterion turns out to be a very powerful tool in applications to differential equations, see for instance \cite{DaPrato-Tudor95}.

\paragraph{Stepanov almost periodicity}
In all the sequel, unless stated otherwise, $p$ denotes a real number,
with $p\geq 1$.
Following
\cite{danilov_measure-valued_1997,danilov_almost_2000}, let
$M(\R,\espE)$ be the set of measurable functions from $\R$ to $\espE$
(we do not distinguish between functions that coincide almost
everywhere for Lebesgue's measure).
We fix a point $x_0$ in $\espE$. We denote by $\elp{p}(\R,\espE)$,  the subset of $M(\R,\espE)$ of locally $p$-integrable functions, that is, $$\elp{p}(\R,\espE)=\Big\{f\in M(\R,\espE),\,\mbox{ for any }a,b\in \R ;\,\int\limits_{[a,b]}d^{p}\left(
f(t),x_{0}\right) dt<+\infty\Big\}.$$
Define $\ellp{p}(0,1;\espE)$ as the set
$$\ellp{p}(0,1;\espE)=\Big\{f\in M(\R,E),\, \int\limits_{[0,1]}d^{p}\left(f(t),x_{0}\right) dt<+\infty\Big\}$$
which is a complete metric space, when it is endowed with the metric $$\D_{\ellp{p}}(f,g)=\CCO{\int\limits_{[0,1]}d^{p}(f(t),g(t)) dt}^{1/p}.$$
We denote by $\ellp{\infty}(\R,\espE)$ the space of all $\espE$-valued essentially bounded functions,
endowed with essential supremum metric.
Obviously, all the previous spaces do not depend on the choice of the point $x_0 \in \espE$.

We say that a locally $p$-integrable function
$f :\,\R\rightarrow\espE$ is {\em Stepanov almost periodic of order $p$} or
{\em $\St^p$-almost periodic},
if, for all $\varepsilon >0$, the set
\begin{equation*}
 \St^pT(f,\epsilon):=\Big\{\tau \in \R, \, D_{\St^p}^{d}\CCO{f\left( .+\tau \right) ,f\left( .\right)}
\leq \epsilon \Big\}
\end{equation*}
is relatively dense, where, for any locally $p$-integrable functions $f,g :\,\R\rightarrow\espE$,
\begin{equation*}
 D_{\St^p}^{d}(f ,g)=\sup_{x\in\R}
                 \CCO{\int_{x}^{x+1}d^p(f(t),g(t))\,dt}^{1/p}.
\end{equation*}
The space of $\St^p$-almost periodic $\espE$-valued functions
is denoted by $\StAP{p}(\R,\espE)$.
Let $\StAP{\infty}(\R,\espE)$ be the space of functions $f\in \ellp{\infty}(\R,\espE)$ such that for any
 $\epsilon >0$, there exists a relatively dense $T_{{\ellp{\infty}}}^{d}(f,\epsilon)$ such that
 \begin{equation*}
 \D_{\ellp{\infty}}^{d}\CCO{f\left( .+\tau \right) ,f\left( .\right)}
\leq \epsilon,\, \mbox{for all}\,\tau \in T_{{\ellp{\infty}}}^{d}(f,\epsilon).
\end{equation*}
 The relation $\lim_{p\rightarrow \infty} D_{\St^p}^{d}(f ,g)=D_{\ellp{\infty}}^{d}(f ,g)$
 holds for any functions $f,g\in M(\R,\espE)$, see \cite{Andres-Pennequin2012} for the proof.

As in the case of Bohr-almost periodic functions, we have similar characterizations of Stepanov almost periodic

functions. More precisely, let $f \in \elp{p}(\R,\espE)$, $p\in [1,+\infty[$. Then, the
 following statements are equivalent (compare with \cite[Theorem~1 and Proposition~6]{Hu-Mingarelli08}):
\begin{itemize}
 \item $f$ is $\St^p$-almost periodic.
\item $f$ is {\em $\St^p$-almost periodic in Bochner sense}, that is, from every real sequence $(\alpha'_n) \subset \R$ one can extract a subsequence
$(\alpha_n)$ of $(\alpha'_n)$ and there exists a function $g\in \elp{p}(\R,\espE)$ such that
$$\lim_{n\rightarrow +\infty} D_{\St^{p}}^{d}\left( f(.+\alpha_n),g(.)\right) = 0.$$
\item $f$ satisfies {\em Bochner's type double sequence criterion in Stepanov sense},
that is, for every pair of sequences
$\{\alpha^{'}_n\} \subset \R$ and $\{ \beta^{'}_n\} \subset \R$, there exist common subsequences
$(\alpha_n)\subset (\alpha'_n)$ and
  $(\beta_n)\subset (\beta'_n)$  such
  that, for every $t\in \R$, the limits
\begin{equation}\label{def:Bochner double sequence}
 \lim_{n\rightarrow\infty}\lim_{m\rightarrow\infty}f(t+\alpha_n+\beta_m)
 \text{ and }\lim_{n\rightarrow\infty}f(t+\alpha_n+\beta_n),
\end{equation}
exist and are equal, in the sense of the $\ellp{p}$-metric $$\D_{\ellp{p}}^{d}(h(t+.),g(t+.))=\CCO{\int_{[0,1]}d^p(h(t+s),g(t+s))\,ds}^{1/p}$$ for $h,g\in \elp{p}(\R,\espE)$.
\end{itemize}
 The equivalence between these statements was originally established in \cite{Hu-Mingarelli08} in the context of Banach spaces. However, one can provide a simpler proof of the equivalence of the three previous items based on the following observation that the concept of Stepanov almost periodicity can be seen as Bohr almost periodicity of some function with values in the Lebesgue space $\ellp{p}(0,1;\espE)$. More precisely, let $f^b$ denote the {\em Bochner transform} \cite{bochner33abstrakte} of a function $f \in \elp{p}(\R,\espE)$:
$$f^b :\,
\left\{ \begin{array}{lcl}
\R&\rightarrow&\espE^{[0,1]}\\
t&\mapsto & f(t+.).
\end{array}\right.
$$
Then $f \in \StAP{p}(\R,\espE)$ if, and only if, $f^b \in \AP(\R,\ellp{p}(0,1;\espE)$, and the previous equivalences become a simple consequence of the fact that $f_n \rightarrow f$ if and only if $f^b_n\rightarrow f^b$ (see e.g. \cite{Amerio-Prouse-1971-book,Andres_Bersani_Lesniak_01_APvarious_metric,Pankov-1990-book,Levitan_Zhikov82}).


 Since functions in $\StAP{p}(\R,\espE)$ are bounded with respect to the Stepanov metric, one denotes by $\St^p(\R,\espE)$  (or $\St^p(\R)$ when $\espE=\R$, and $\St(\R,\espE)$ when $p=1$) the set of all $D_{\St^p}^{d}$-bounded functions, that is, for some (or any) fixed $ x_{0}\in\espE$,
$$\St^p(\R,\espE)=\{f\in M(\R,\espE); D_{\St^p}^{d}(f,x_0) <+\infty\}.$$
So, from now on, the space $\StAP{p}(\R,\espE)$ will be seen as a (closed) subset of the complete metric space $(\St^p(\R,\espE),D_{\St^p}^{d})$. We have the following inclusions:
$$\AP(\R,\espE)\subset \StAP{\infty}(\R,\espE) \subset \StAP{p}(\R,\espE) \subset \StAP{q}(\R,\espE)\subset \StAP{1}(\R,\espE)\subset \St(\R,\espE) $$
for $p\geq q \geq 1$ and
$\AP(\R,\espE)=\StAP{p}(\R,\espE) \cap \C_u(\R,\espE)$, where $\C_u(\R,\espE)$ denotes the set of $\espE$-valued uniformly continuous
functions on $\R$.

For more properties and details about real and Banach-valued Stepanov almost periodic functions, we refer the reader for instance to the papers and monographs \cite{Amerio-Prouse-1971-book,Andres-Bersani-Grande-06-Hierarchy,Pankov-1990-book,Besicovitch-1955-book,Corduneanu89book,Fink74book,Levitan53,Levitan_Zhikov82}.

Beside the previous characterizations of the class
$\StAP{p}(\R,\espE)$, there is an other one based on the concept of
Stepanov almost periodicity in (Lebesgue) measure, invented by
Stepanov \cite{Stepanoff1926}.
This concept plays a significant role in the proof of our
superposition theorem in $\StAP{p}(\R,\espE)$ (Theorem
\ref{thm:theorem-composition}).
\paragraph{Stepanov almost periodicity in Lebesgue measure : the space $\StAP{0}$}
For any measurable set $A \subset \R$, let
$$\varkappa(A)=\sup_{\xi \in \R}\meas\CCO{[\xi,\xi+1]\cap A},$$ where
$\meas $ is the Lebesgue measure.
 A measurable function $f:\R\rightarrow \espE$ is said to be {\em Stepanov almost periodic in Lebesgue measure} or {\em $\St^0$-almost periodic} if for any $\epsilon, \, \delta>0$,  the set
\begin{equation*}
T_\varkappa(f,\epsilon,\delta):=\accol{\tau \in \R,\, \sup_{\xi \in \R}\meas{\accol{ t \in [\xi, \xi +1], d\bigl(f(t+\tau),f(t)\bigr) \geq \epsilon }}<
\delta}
\end{equation*}
is relatively dense. It should be mentioned that this almost periodicity was introduced as {\em $\mu$-almost periodicity} \cite{Stoinski_AP_Lebesgue_measure94}.
We denote by $\StAP{0}(\R,\espE)$ ($\StAP{0}(\R)$ when $\espE=\R$) the space of such fonctions.
This space was studied in depth by several authors (in both normed and metric spaces). One can quote Stoinski's works \cite{Stoinski_AP_Lebesgue_measure94,Stoinski99_compatness}, where an approximation property and  some compactness criterion are given. Danilov \cite{danilov_measure-valued_1997,Danilov-1997,danilov_almost_2000} has explored this class in the framework of almost periodic measure-valued functions. The recently published paper \cite{KASPRZAK_16}, that we discovered at the time of writing this paper, completes the previous ones. The authors of this paper investigate some other properties, in particular, they show that in general the mean value of $\St^0$-almost periodic functions may not exist, furthermore, $\St^0$-almost periodic functions are generally not Stepanov-bounded.

As pointed out by Danilov \cite{Danilov-1997}, $\St^0$-almost periodicity coincides with classical Stepanov almost periodicity when replacing the metric $d$ by $d'=\min(d,1)$. In other words, we have the following characterization (see \cite{Danilov-1997,Danilov98-uniform-approximation})
\begin{equation}\label{eq:caracterization S_D}
\StAP{0}(\R,\espE)=\StAP{1}\big(\R,(\espE,d')\big)=\StAP{p}\big(\R,(\espE,d')\big), \forall p>0.
\end{equation}
More generally, Stepanov almost periodicity can be seen as $\St^0$-almost periodicity under a uniform integrability condition in Stepanov sense (see \cite{danilov_measure-valued_1997,Franklin1929,KASPRZAK_16,Stepanoff1926}). To be more precise, let $M_p'(\R,\espE)$ be the set of $D_{\St^p}^{d}$-bounded functions such that
\begin{equation}
\lim_{\delta\rightarrow 0^+}\sup_{\xi \in \R}\underset{\meas{T}\leq \delta} {\sup_{T \subset [\xi, \xi +1]}}
\int_T d^p\bigl(f(t), x_0\bigr)dt=0.
\end{equation}
The space $M_p'(\R,\espE)$, $p\geq 1$,  is a closed subset of $(\St^p(\R,\espE),D_{\St^p}^{d})$.
In \cite[page 1420]{Danilov-1997}, Danilov gives an elegant characterization of
Stepanov almost periodic functions in terms of $M_p'(\R,\espE)$ and $\StAP{0}(\R,\espE)$, more precisely:
\begin{equation}\label{eq:caracterisation-Danilov}
 \StAP{p}(\R,\espE)=\StAP{0}(\R,\espE) \cap M_p'(\R,\espE).
\end{equation}

A rather interesting result about the space $\StAP{0}(\R,\espE)$
is reported in the following theorem \cite[Theorem $3$]{Danilov98-uniform-approximation}, which  gives a uniform approximation of Stepanov almost periodic functions by Bohr almost periodic functions, in the context of normed space $\espE$. Before, let us denote by $\cS(\R)$ the collection of measurable sets $T \subset \R$ such that the indicator function of $T$, $\un{T}$, belongs to $\StAP{1}(\R)$, and by $T^c$ the complementary set of $T$.
\begin{theo}[Danilov~\cite{Danilov98-uniform-approximation}]\label{thm:Danilov-compacite}
Let $f\in \StAP{0}(\mathbb{R},\espE)$, then for any $\delta>0$, there exist a set $T_{\delta}\in \cS(\R)$
and a Bohr almost periodic function $F_{\delta}$ such that
$\varkappa(T_{\delta}^c)<\delta$ and $f(t) = F_{\delta}(t)$ for all
$t \in T_{\delta}$.
\end{theo}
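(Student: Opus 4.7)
The plan is to combine the characterization $\StAP{0}(\R,\espE) = \StAP{1}\bigl(\R,(\espE,d')\bigr)$ from (\ref{eq:caracterization S_D}), where $d' = \min(d,1)$, with a Luzin-type construction driven by the relatively dense sets of Stepanov-measure almost periods. First I would replace $d$ by $d'$, so that $f : \R \to (\espE,d')$ becomes $\St^1$-almost periodic and its Bochner transform $f^b : \R \to \ellp{1}\bigl(0,1;(\espE,d')\bigr)$ is Bohr almost periodic; this lets me work with a bounded metric without losing structural information about $f$.

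Next, I would fix a sequence $\epsilon_n \downarrow 0$ together with positive reals $\delta_n$ satisfying $\sum_n \delta_n < \delta$. For each $n$, the set $A_n := T_\varkappa(f,\epsilon_n,\delta_n)$ is relatively dense with some inclusion length $\ell_n$; I would choose a selection $\{\tau_{n,k}\}_{k\in\mathbb{Z}} \subset A_n$ with $\tau_{n,k}\in[k\ell_n,(k+1)\ell_n]$, and define the bad set
\[
B_n := \bigcup_{k\in\mathbb{Z}}\bigl\{t\in[k\ell_n,(k+1)\ell_n] : d\bigl(f(t+\tau_{n,k}),f(t)\bigr)\geq\epsilon_n\bigr\},
\]
so that $\varkappa(B_n) \leq \delta_n$ by the very definition of $T_\varkappa$. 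Setting $T_\delta := \R \setminus \bigcup_n B_n$ then yields $\varkappa(T_\delta^c)\leq \sum_n \delta_n < \delta$. Membership $T_\delta \in \cS(\R)$ should be verified by checking that each $\un{B_n}$ is $\St^1$-almost periodic, a fact reducible to the $\St^1$-almost periodicity of the real-valued maps $t\mapsto d(f(t+\tau),f(t))$ and the stability of $\cS(\R)$ under countable operations with summable defects.

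Finally, on $T_\delta$ the function $f$ is uniformly $\epsilon_n$-close, in the original metric $d$, to each local shift $f(\cdot + \tau_{n,k})$, with errors tending uniformly to zero as $n\to\infty$. I would use this uniform almost-periodicity of $f|_{T_\delta}$ to glue together the translates $f(\cdot + \tau_{n,k})$ into a continuous $F_\delta : \R \to \espE$ that coincides with $f$ on $T_\delta$, and then verify its Bohr almost periodicity through Bochner's double-sequence criterion applied to the double-indexed shifts. The principal obstacle lies in this final extension step: without any linear or convex structure on $\espE$ one cannot average, so $F_\delta$ must be built pointwise from carefully chosen shifts of $f$ itself, and the verification that the resulting glued function is almost periodic in the uniform $d$-metric (and not merely in the Stepanov metric) is the truly delicate point.
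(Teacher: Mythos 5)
A preliminary remark: the paper does not prove this statement at all --- it is quoted from Danilov \cite{Danilov98-uniform-approximation} --- so your proposal can only be measured against Danilov's own argument, which rests on uniform approximation of $f$ by \emph{elementary} almost periodic functions (countably-valued functions $\sum_i x_i \un{T_i}$ with $\{T_i\}\subset\cS(\R)$ a partition) and on his machinery of almost periodic selections, not on the translate-gluing scheme you describe.

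Your sketch has the right Luzin-type flavour, and the first half (taking $\epsilon_n\downarrow 0$, $\sum_n\delta_n<\delta$, and discarding bad sets $B_n$ of total $\varkappa$-measure $<\delta$) is unobjectionable up to a harmless factor coming from windows $[\xi,\xi+1]$ that straddle two blocks. But the two claims that carry the entire theorem are asserted rather than proved, and the mechanisms you offer for them fail. \emph{First}, membership $T_\delta\in\cS(\R)$: your $B_n$ is assembled block by block from the level sets $\{t: d(f(t+\tau_{n,k}),f(t))\geq\epsilon_n\}$, with a comparison shift $\tau_{n,k}$ that changes from one block $[k\ell_n,(k+1)\ell_n]$ to the next. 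The selection $k\mapsto\tau_{n,k}$ is an arbitrary choice from a relatively dense set and carries no almost periodic structure, so $\un{B_n}$ has no reason to lie in $\StAP{1}(\R)$; and even for a single fixed $\tau$, the indicator of a level set $\{g\geq\epsilon\}$ of a Stepanov almost periodic function $g$ need not be $\St^1$-almost periodic unless the level $\epsilon$ is chosen so that $\varkappa(\{\abs{g-\epsilon}\leq\eta\})\rightarrow 0$ as $\eta\rightarrow 0$; Danilov sidesteps this by building the sets $T_i$ directly through his elementary approximations, with levels adjusted so that the indicators are almost periodic. \emph{Second}, the construction of $F_\delta$: every candidate value you propose, $f(t+\tau_{n,k})$, is a value of the merely measurable function $f$, so no gluing of translates of $f$ can produce a continuous --- let alone Bohr almost periodic --- function. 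Moreover the estimate $d(f(t+\tau_{n,k}),f(t))<\epsilon_n$ on $T_\delta$ compares $f$ at $t$ with $f$ at the far-away point $t+\tau_{n,k}$; it gives no control on $d(f(s),f(t))$ for nearby $s,t\in T_\delta$, hence not even continuity of $f$ restricted to $T_\delta$, and the passage from there to a continuous extension on all of $\R$ together with its Bohr almost periodicity is precisely the content of Danilov's theorem. These are not finishing touches but the core of the proof, and the proposal does not supply them.
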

As consequence, we have the following corollary:
\begin{cor}\label{cor:Danilov-compacite}
Let $f\in \StAP{0}(\mathbb{R},\espE)$. Then, for all $\epsilon>0$, there exist a measurable set $T_{\epsilon} \in \cS(\R)$ and a compact subset $K_{\epsilon}$ of $\espE$ such that $\varkappa(T_{\epsilon}^c)<\epsilon$ and
$f(t)\in
K_{\epsilon}, \forall t\in T_{\epsilon}$.
\end{cor}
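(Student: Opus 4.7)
The plan is to obtain the corollary as a direct consequence of Danilov's uniform approximation theorem (Theorem~\ref{thm:Danilov-compacite}), combined with the classical fact that a Bohr almost periodic function with values in a complete metric space has relatively compact range.

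First, given $\epsilon>0$, I would apply Theorem~\ref{thm:Danilov-compacite} with $\delta=\epsilon$. This yields a set $T_{\epsilon}\in\cS(\R)$ and a Bohr almost periodic function $F_{\epsilon}:\R\to\espE$ such that $\varkappa(T_{\epsilon}^{c})<\epsilon$ and $f(t)=F_{\epsilon}(t)$ for every $t\in T_{\epsilon}$.

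Second, I would invoke the well-known property that any $F\in\AP(\R,\espE)$ has relatively compact range in $\espE$. This is an immediate consequence of the Bochner criterion recalled in the excerpt: since $\{F(\cdot+t)\tq t\in\R\}$ is relatively compact in $\BCont(\R,\espE)$ for the uniform metric, the evaluation map at $0$ shows that $\{F(t)\tq t\in\R\}$ is relatively compact in $\espE$. Setting $K_{\epsilon}:=\overline{\{F_{\epsilon}(t)\tq t\in\R\}}$ therefore produces a compact subset of $\espE$.

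Finally, for every $t\in T_{\epsilon}$ one has $f(t)=F_{\epsilon}(t)\in K_{\epsilon}$, which is exactly the desired conclusion. No genuine obstacle is expected; the only minor point to be careful about is the compactness-of-range statement for Bohr almost periodic functions with values in a complete metric space (rather than a Banach space), but this follows verbatim from the Bochner criterion already stated in the excerpt, so no additional hypothesis on $\espE$ is needed.
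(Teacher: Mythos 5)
Your proposal is correct and follows exactly the deduction the paper intends (the corollary is presented as an immediate consequence of Theorem~\ref{thm:Danilov-compacite} with no written proof): apply the uniform approximation theorem with $\delta=\epsilon$, then use the relative compactness of the range of a Bohr almost periodic function, obtained from Bochner's criterion by evaluating at $0$. The only caveat, which the paper itself acknowledges in the remark immediately following the corollary, is that Theorem~\ref{thm:Danilov-compacite} is stated there in the context of a normed space $\espE$, so for a general complete metric space one must invoke Danilov's metric-space version of the approximation result from \cite{danilov_almost_2000}.
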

Danilov has shown that this property remains valid even in the metric framework \cite{danilov_almost_2000}.
\begin{remark}\label{rem:metric property of S_D}{\em
\begin{enumerate}
  \item Unlike almost periodicity in Bohr sense and  almost periodicity in Lebesque measure for function with values in a metric space $(\espE, d)$,
  which depend only on the topological structure of $\espE$ and not on its metric (see e.g., \cite{bedouhene-mellah-prf2012} and \cite{Danilov-1997} respectively), Stepanov almost periodicity is a metric property. In fact, as the metrics $d$ and $d'=\min(d,1)$ are topologically equivalent on $\espE$, we only need to show that the inclusion  $\StAP{1}\big(\R,(\espE,d)\big) \subset \StAP{1}\big(\R,(\espE,d')\big)$ is strict, since in view of \eqref{eq:caracterization S_D}, we have $\StAP{1}\big(\R,(\espE,d')\big)=\StAP{0}(\mathbb{R},\espE)$. Consider the example given in \cite[Remark~3.3]{Andres-Pennequin2012}. As shown by the authors, the function $g=\exp\CCO{\sum_{n=2}^{+\infty}g_n}$, where $g_n$ is the $4n$-periodic function given by
  $$g_n(t)=\beta_n\CCO{1-\frac{2}{\alpha_n}\abs{t-n}}
  \un{[n-\frac{2}{\alpha_n},n+\frac{2}{\alpha_n}]}(t),\quad
  t\in[-2n,2n],
$$
with $\alpha_n=1/n^5$ and $\beta_n=n^3$, is not in $\StAP{1}(\R)$. Using Danilov's Corollary \cite{Danilov-1997}, we get that $g$ belongs to $\StAP{0}(\mathbb{R})$, as a superposition of a continuous function and a periodic,
continuous and bounded function.
  \item Still in the spirit of the link between the spaces $\StAP{1}(\R)$ and $\StAP{0}(\R)$, an interesting property established by Stoi\'nski says that the inverse of any trigonometric polynomial with constant sign is $\St^0$-almost periodic. In particular, the Levitan function $f:\R\rightarrow \R$ given by $f(t)=\dfrac{1}{2+ \cos(t)+\cos(2t)}$ is $\St^{0}$-almost periodic but not Stepanov almost periodic (see Example~\ref{exple:Bohl-Bohr-Thm} and  \cite{KASPRZAK_16} for the second statement).
  \item Uniform integrability in Stepanov sense is a metric property, that is, the space $M_p'(\R,(\espE,d))$ depends on the metric $d$.
\end{enumerate}}
\end{remark}

\paragraph{Bohr and Stepanov almost periodic functions depending on a parameter} Hereinafter, some definitions of Bohr and Stepanov almost periodicity for metric-valued parametric functions are presented. Such definitions are simple adaptation of the well-known ones in the literature, see in particular \cite{bed-chal-mel-prf-sma2015,blot-al09superposition,diagana_2008_SPA,Ding-Long-N'guerekata2011,li_stepanov-like_2011}.
\begin{enumerate}
\item We say that a parametric  function $f :\,\R\times\espX\rightarrow \espY$ is {\em almost periodic with respect to the
first variable, uniformly with respect to the second variable in bounded subsets of $\espX$} (respectively {\em
in compact subsets of $\espX$}) if, for every  bounded (respectively compact) subset $B$ of $\espX$, the mapping $f:\R \rightarrow C(B,\espY)$ is almost periodic.
 We denote by $\APUb(\R\times\espX,\espY)$ and
$\APUc(\R\times\espX,\espY) $ respectively the spaces of
such functions.
 \item We say that a function $f:\R\times \espX\rightarrow \espY$ is $\St^{p}$-almost periodic if,
 for every $x\in \espX$, the $\espY$-valued function $f(.,x)$ is $\St^{p}$-almost periodic. We denote by
 $\StAP{p}\CCO{ \R \times \espX,\espY}$ the space of such functions.
 \item Let $f\in\StAP{p}\CCO{ \R \times \espX,\espY}$. We say that $f$ is $\St^{p}$-almost periodic uniformly with respect to the second variable in compact (resp. bounded) subsets of $\espX$ if $f(., x)$ is $\St^{p}$-almost periodic uniformly with respect to
 $x\in K$ for any compact (resp. bounded) subset $K$ of $\espX$. The space of such functions is denoted by
  $\StAPUc{p} \CCO{ \R \times \espX,\espY }$ (resp. $\StAPUb{p} \CCO{ \R \times \espX,\espY }$).
\end{enumerate}
   Clearly, we have the following inclusions: $$\StAPUb{p}(\R \times \espX,\espY)
\subset \StAPUc{p}(\R \times \espX,\espY)  \subset \StAP{p}(\R \times \espX,\espY) \subset \elp{p}(\R \times \espX,\espY),$$
where $\elp{p}(\R \times \espX,\espY)$ denotes
the set of measurable functions $f:\R\times \espX\rightarrow \espY$ such that, for all
$x\in \espX$; $f(.,x) \in \elp{p}(\R ,\espY)$.

The following proposition will be very useful in the sequel.
\begin{prop}\label{prop:stepanov-ponctuel}
  Let $\espY$ be a complete metric space, and let $\espX$ be a complete separable metric space. Let $f\in \StAP{p}(\R\times \espX, \espY)$
  satisfying the following Lipschitz condition:
  \begin{equation}\label{eq:lipschitze}
   d\CCO{f(t,x) , f(t,y)}\leq K(t) d\CCO{x,y}, \,\forall t \in \R,\, x,y \in \espX,
  \end{equation}
 for some positive function $K(.)$ in $\St^{p}(\R)$.
  Then for every real sequence $(\alpha_n^{'})$, there exist a subsequence $(\alpha_n) \subset (\alpha_n^{'})$ (independent of $x$) and a function
  $f^\infty \in \StAP{p}(\R\times \espX, \espY)$ such that for every
  $t\in \R$ and $x\in \espX$, we have
  \begin{equation}\label{eq:unicseq}
   \lim_n\int_t^{t+1}d\CCO{f(s+\alpha_n,x),f^\infty(s,x)}^{p}ds = 0.
  \end{equation}
 \end{prop}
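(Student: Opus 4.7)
The approach is a classical diagonal extraction over a countable dense subset of $\espX$, with the Lipschitz bound~\eqref{eq:lipschitze} used to bootstrap convergence at the dense set into convergence at every point of $\espX$.

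First, pick a countable dense subset $\{x_k\}_{k\geq 1}$ of $\espX$ (using separability). For each $k$, $f(\cdot,x_k)\in\StAP{p}(\R,\espY)$, so by the Bochner criterion in Stepanov sense there is a subsequence $(\alpha_n^{(k)})\subset(\alpha_n^{(k-1)})\subset\cdots\subset(\alpha_n')$ along which $f(\cdot+\alpha_n^{(k)},x_k)$ converges in the complete metric space $(\St^p(\R,\espY),D_{\St^p}^{d})$ to some $g_k\in\elp{p}(\R,\espY)$. Setting $\alpha_n:=\alpha_n^{(n)}$ (Cantor's diagonal), $f(\cdot+\alpha_n,x_k)\to g_k=:f^\infty(\cdot,x_k)$ in $\St^p$ for every~$k$.

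For an arbitrary $x\in\espX$, the Lipschitz assumption~\eqref{eq:lipschitze} together with the translation invariance of the Stepanov seminorm of $K$ gives
\[
D_{\St^p}^{d}\CCO{f(\cdot+\alpha_n,x),f(\cdot+\alpha_n,x_k)}\leq\kappa\,d(x,x_k),\qquad \kappa:=\sup_{\xi\in\R}\CCO{\int_\xi^{\xi+1}K(s)^p\,ds}^{1/p}<\infty,
\]
uniformly in $n$ and $k$. Fix $\epsilon>0$, pick $x_k$ with $\kappa\,d(x,x_k)<\epsilon/4$, and then take $N$ so large that $D_{\St^p}^{d}\bigl(f(\cdot+\alpha_n,x_k),f(\cdot+\alpha_m,x_k)\bigr)<\epsilon/2$ whenever $n,m\geq N$. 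A triangle-inequality shows that $\bigl(f(\cdot+\alpha_n,x)\bigr)_n$ is Cauchy in $(\St^p(\R,\espY),D_{\St^p}^{d})$, hence converges to a limit $f^\infty(\cdot,x)$; this yields \eqref{eq:unicseq}, in fact uniformly in $t$.

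It remains to verify that $f^\infty(\cdot,x)\in\StAP{p}(\R,\espY)$ for every $x\in\espX$. Via the Bochner transform $h\mapsto h^b:\R\to\ellp{p}(0,1;\espY)$, $\St^p$-almost periodicity of $h$ is equivalent to Bohr almost periodicity of $h^b$, and the latter class is stable under uniform limits in $\BCont(\R,\ellp{p}(0,1;\espY))$; since the $\St^p$-convergence of $f(\cdot+\alpha_n,x)$ is exactly uniform convergence of $f^b(\cdot+\alpha_n,x)$, we conclude $f^\infty(\cdot,x)\in\StAP{p}(\R,\espY)$, hence $f^\infty\in\StAP{p}(\R\times\espX,\espY)$. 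The main delicacy is that $K$ itself is \emph{not} assumed almost periodic, so the translates $K(\cdot+\alpha_n)$ need not converge; however, only the translation-invariant Stepanov norm of $K$ enters the Cauchy estimate, which is enough to make the density-approximation step uniform in $n$.
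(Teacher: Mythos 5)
Your proof is correct and follows essentially the same route as the paper's: a diagonal extraction over a countable dense subset of $\espX$ via the Bochner criterion in Stepanov sense, followed by the Lipschitz bound (with the translation-invariant Stepanov norm of $K$) to propagate convergence to arbitrary $x$. Your packaging of the extension step as a Cauchy argument in the complete space $(\St^p(\R,\espY),D_{\St^p}^{d})$, with $f^\infty(\cdot,x)\in\StAP{p}$ obtained from closedness of $\StAP{p}$ under uniform Stepanov limits, is a slightly cleaner rendering of the paper's interchange-of-limits step, but it is the same argument in substance.
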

\begin{proof}
 Firstly, let us show that $f^\infty$ is Lipschitz with respect to the second variable in the Stepanov metric sense. Let $x,y \in \espX$.
 We consider a real sequence $(\alpha_n^{'}) \subset \R$. Since $f\in \StAP{p}(\R\times \espX, \espY)$, for every $x \in \espX$,
 we can find a subsequence $(\alpha_n) \subset (\alpha_n^{'})$ (depending on $x$) such that
 \begin{equation}\label{eq:SAPX}
  \lim_nD_{\St^{p}}^{d}\CCO{f(.+\alpha_n,x) - f^\infty(.,x)}= 0.
 \end{equation}
For the same reason, for every $y \in \espX$, there exists a subsequence of  $(\alpha_n)$, (depending on both $x$ and $y$ still noted $(\alpha_n)$ for simplicity) such that
\begin{equation}\label{eq:SAPY}
  \lim_nD_{\St^{p}}^{d}\CCO{f(.+\alpha_n,y) , f^\infty(.,y)} = 0.
 \end{equation}
 Then, by  (\ref{eq:lipschitze}), (\ref{eq:SAPX}) and (\ref{eq:SAPY}), we get
 \begin{multline}\label{eq:conti}
 D_{\St^{p}}^{d}\CCO{f^\infty(.,x),f^\infty(.,y)} \leq  \lim D_{\St^{p}}^{d}\CCO{f^\infty(.,x),f(.+\alpha_n,x)} + \lim D_{\St^{p}}^{d}\CCO{f(.+\alpha_n,x),f(.+\alpha_n,y)}\\
                               +\lim D_{\St^{p}}^{d}\CCO{f(.+\alpha_n,y),f^\infty(.,y)}\\
                             \leq \|K\|_{\St^p}d{(x,y)}.
 \end{multline}
 Secondly, let us show (\ref{eq:unicseq}). Let $(\alpha_n^{'})$ be a real sequence. Since $\espX$ is separable, let $D$ be a dense countable subset of $\espX$.
 Using (\ref{eq:SAPX}) and a diagonal procedure, we can find a subsequence $(\alpha_n)$ of $(\alpha_n^{'})$ such that for every
 $t\in \R$ and $x\in D$, we have
  \begin{equation}\label{eq:count}
   \lim_n\int_t^{t+1}\CCO{d\CCO{f(s+\alpha_n,x) , f^\infty(s,x)}}^pds = 0
  \end{equation}
  Let $x\in \espX$, there exists a sequence $(x_k)\subset D$ such that $\lim_k d(x_k , x) = 0$. From \eqref{eq:lipschitze}, we deduce
  \begin{equation}\label{eq:convunif}
   \lim_k\int_t^{t+1}\CCO{d\CCO{f(s+\alpha_n,x_k), f(s+\alpha_n,x)}}^pds = 0,
  \end{equation}
  uniformly with respect to $n\in \N$. Now from \eqref{eq:count}, we obtain, for every $t\in \R$ and $k\in \N$,
  \begin{equation}\label{eq:convsimple}
   \lim_n\int_t^{t+1}\CCO{d\CCO{f(s+\alpha_n,x_k),f^\infty(s,x_k)}}^pds = 0.
  \end{equation}
  Using \eqref{eq:convunif}, \eqref{eq:convsimple} and by a classical result on interchange of limits, we deduce
  $$\lim_n f(s+\alpha_n,x) = \lim_kf^\infty(s,x_k)= f^\infty(s,x)$$in Stepanov metric. The last equality follows from \eqref{eq:conti}.
 \end{proof}
\finpr
\subsection{Bohr and Stepanov weighted pseudo almost periodic functions with values in metric space}
   The notions of Stepanov-like weighted pseudo almost periodicity and Stepanov pseudo almost periodicity of functions, with values in
Banach space $\espX$, were introduced by T. Diagana \cite{diagana06weighted,diagana07book,diagana_2008_SPA} as natural generalizations of the pseudo almost periodicity invented by Zhang \cite{zhang94,zhang95}.

Here we give the definitions of these different notions for functions with values in a complete metric space $\espE$. C. and M. Tudor \cite{Tudor-Tudor99} have proposed an elegant definition of pseudo almost periodicity in the context of metric spaces, which is slightly restrictive, since it requires compactness  of the range of the function instead of its boundedness. A more general definition of weighted pseudo almost periodicity (automorphic) has been introduced in \cite{bed-chal-mel-prf-sma2015}, where it is shown that there is no need to assume that $\espE$ is a vector space, nor a metric space, and these notions depend only on the topological structure of $\espE$. The definition we propose here (see Definition~\ref{def:PAP_metric}) is an intermediate between that of C. and M. Tudor and that in the wide sense \cite[Proposition~2.5 (i)]{bed-chal-mel-prf-sma2015}. It coincides with the one existing in the literature when $\espE$ is a normed space \cite{blot-cieutat_ezzinbi2012,blot-cieutat-ezzinbi2013}.

 We begin by recalling the definition of $\mu$-ergodicity for vector-valued functions \cite{blot-cieutat_ezzinbi2012,blot-cieutat-ezzinbi2013}. Let $(\espX; \norm{.})$ be a Banach space. Let $\mu$ be a Borel measure on $\R$ satisfying
\begin{equation}\label{eq:mu}
\mu(\R)=\infty \text{ and }
\mu(I)<\infty \text{ for every bounded interval $I$}.
\end{equation}
A function $f\in\BCont(\R,\espX)$ is said to be {\em$\mu$-ergodic} if it satisfies
\begin{equation*}
\lim_{r\rightarrow\infty}\frac{1}{\mu([-r,r])}\int_{[-r,r]}\norm{f(t)}\,d\mu(t)=0.
\end{equation*}
We denote by $\ER(\R,\espX,\mu)$ the space of {\em$\mu$-ergodic} $\espX$-valued functions. When $\mu$ is the Lebesgue measure, we merely denote this space by $\ER(\R,\espX)$. For more details about $\mu$-ergodic functions, we refer the reader to \cite{blot-cieutat_ezzinbi2012,blot-cieutat-ezzinbi2013}.
\begin{definition} \label{def:PAP_metric}{\em
A continuous and bounded function $f :\,\R\rightarrow\espE$ is said to be {\em pseudo almost periodic} if there exists a function $g\in\AP(\R,\espE)$ such that
the mapping $t\rightarrow d(f(t),g(t))$ is in $\ER(\R,\R)$, and it is said to be {\em $\mu$-pseudo almost periodic} or
{\em weighted pseudo almost periodic} if there exists a function $g\in\AP(\R,\espE)$ such that the mapping $t\rightarrow d(f(t),g(t))$ is in $\ER(\R,\R,\mu)$.}
\end{definition}
We denote respectively by $\PAP(\R,\espE)$ and $\PAP(\R,\espE,\mu)$ the spaces of such functions.
Note that $g$ is uniquely determined by $f$ in the first case (see \cite{Tudor-Tudor99}). This is not necessarily the case when considering $f\in\PAP(\R,\espE,\mu)$. However, it is easy to see that a sufficient condition of uniqueness of $g$ is that $\ER(\R,\espE,\mu)$ is translation invariant.
This is the case in particular if
Condition (\textbf{H}) of \cite{blot-cieutat_ezzinbi2012} is satisfied:
\begin{itemize}
\item[(\textbf{H})] For every $\tau\in\R$, there exist $\beta>0$ and a
  bounded interval $I$ such that $\mu(A+\tau)\leq\beta \mu(A)$
  whenever $A$ is a Borel subset of $\R$ such that $A\cap I=\emptyset$.
\end{itemize}

Let $p\geq 1$.
We use Bochner's transform to define the Stepanov $\mu$-pseudo almost periodicity for metric-valued functions:
\begin{definition}{\em
\begin{enumerate}
  \item We say that a locally $p$-integrable function
$f :\,\R\rightarrow\espE$ is {\em Stepanov $\mu$-ergodic} if $f^b\in\ER(\R,\ellp{p}([0,1],dt,\espE),\mu)$. Set
$$\StER{p}(\R,\espE,\mu)
=\accol{f\in \elp{p}(\R,\espE) \tq f^b\in\ER(\R,\ellp{p}([0,1],dt,\espE),\mu)}.$$
  \item We say that $f :\,\R\rightarrow\espE$ is

{\em $\St^p$-weighted pseudo almost periodic}, or
{\em Stepanov $\mu$-pseudo almost periodic}
if $$ f^b\in\PAP(\R,\ellp{p}([0,1];\espE),\mu),$$ that is, if there exists $g \in \StAP{p} (\R,\espE)$ such that
\begin{equation} 
\lim_{r\rightarrow\infty}\frac{1}{\mu([-r,r])}\int_{[-r,r]}\D_{\ellp{p}}\Big(f^b(t),g^b(t)\Big)\,d\mu(t)=0,
\end{equation}
or, equivalently,
\begin{equation} \label{eq:ergodicity_Stepanov}
\lim_{r\rightarrow\infty}\frac{1}{\mu([-r,r])}\int_{[-r,r]}\CCO{\int\limits_{0}^{1}d^p(f(t+s),g(t+s))ds}^{1/p}\,d\mu(t)=0.
\end{equation}
\end{enumerate}}
\end{definition}
We denote by $\StPAP{p}(\R,\espE,\mu)$ the space of such functions.  Note that, in this case, the function $g$ is uniquely determined if $\mu$ satisfies  Condition (\textbf{H}). In fact, assume that $g_{1}, g_{2}\in \StAP{p} (\R,\espE)$  define the same function $f$. Then, the mapping $t\rightarrow \D_{\ellp{p}}(g^{b}_{1}(t),g^{b}_{2}(t))$ is in $\ER(\R,\R,\mu) \cap \AP(\R,\R)$. It follows that $\D_{\ellp{p}}(g^{b}_{1}(t),g^{b}_{2}(t))=0$, for all $t\in \R$. Consequently $g_{1}=g_{2}, \, a.e.$.

We have the following characterization of $\StPAP{p}(\R,\espE,\mu)$:
\begin{prop}\label{rem: Blot-cieuta}
Eq. \eqref{eq:ergodicity_Stepanov} is equivalent  to\begin{equation} \label{eq:ergodicity_Stepanov_eq}
\lim_{r\rightarrow\infty}\frac{1}{\mu([-r,r])}\int_{[-r,r]}\int\limits_{0}^{1}d^p(f(t+s),g(t+s))ds\,d\mu(t)=0.
\end{equation}
\end{prop}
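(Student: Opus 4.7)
Proof plan for Proposition \ref{rem: Blot-cieuta}:

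The strategy is to show both implications between the two ergodicity statements using elementary inequalities, after first noting that the integrand $\phi(t):=\int_0^1 d^p(f(t+s),g(t+s))\,ds$ is bounded on $\R$. Denote $M:=\sup_{t\in\R}\phi(t)^{1/p} = \sup_{t\in\R}\D_{\ellp{p}}\bigl(f^b(t),g^b(t)\bigr)$. This supremum is finite because, by definition of $\StPAP{p}(\R,\espE,\mu)$, the Bochner transform $f^b$ belongs to $\PAP(\R,\ellp{p}([0,1];\espE),\mu)$, and by Definition~\ref{def:PAP_metric} such a function must be bounded as a map into the metric space $\ellp{p}([0,1];\espE)$. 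Likewise $g^b\in\AP(\R,\ellp{p}([0,1];\espE))$ is bounded. A triangle inequality in $\ellp{p}([0,1];\espE)$ gives $M<\infty$.

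For the implication \eqref{eq:ergodicity_Stepanov_eq} $\Rightarrow$ \eqref{eq:ergodicity_Stepanov}, I apply Jensen's inequality (or equivalently H\"older with exponents $p$ and $p/(p-1)$) to the concave function $x\mapsto x^{1/p}$ with respect to the probability measure $\mu/\mu([-r,r])$ on $[-r,r]$:
\begin{equation*}
\frac{1}{\mu([-r,r])}\int_{[-r,r]}\phi(t)^{1/p}\,d\mu(t)
\leq\left(\frac{1}{\mu([-r,r])}\int_{[-r,r]}\phi(t)\,d\mu(t)\right)^{1/p}.
\end{equation*}
If \eqref{eq:ergodicity_Stepanov_eq} holds, the right-hand side tends to $0$, whence \eqref{eq:ergodicity_Stepanov} follows.

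For the converse direction \eqref{eq:ergodicity_Stepanov} $\Rightarrow$ \eqref{eq:ergodicity_Stepanov_eq}, I use the pointwise bound $\phi(t)=\phi(t)^{1/p}\cdot\phi(t)^{(p-1)/p}\leq M^{p-1}\phi(t)^{1/p}$, which holds because $\phi(t)\leq M^p$. Integrating against $\mu$ on $[-r,r]$ and dividing by $\mu([-r,r])$ yields
\begin{equation*}
\frac{1}{\mu([-r,r])}\int_{[-r,r]}\phi(t)\,d\mu(t)\leq M^{p-1}\cdot\frac{1}{\mu([-r,r])}\int_{[-r,r]}\phi(t)^{1/p}\,d\mu(t),
\end{equation*}
and the assumption \eqref{eq:ergodicity_Stepanov} forces the left-hand side to tend to $0$. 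The case $p=1$ is trivial since both statements then coincide. The only subtle point is justifying the boundedness $M<\infty$, but this is immediate from the definition of $\PAP$ in a metric space, which mandates that the underlying function be bounded; no other obstacle is expected.
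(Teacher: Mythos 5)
Your proof is correct and follows essentially the same route as the paper's: one direction via H\"older/Jensen applied to the normalized measure $\mu/\mu([-r,r])$, the other via the pointwise bound $\phi(t)\leq M^{p-1}\phi(t)^{1/p}$ using the Stepanov boundedness of $t\mapsto\D_{\ellp{p}}(f^b(t),g^b(t))$. The only difference is that you explicitly justify the finiteness of $M$ (which the paper takes for granted from the definition of $\PAP$), a worthwhile but minor addition.
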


Our proof is inspired from  J. Blot and P. Cieutat \cite[Proposition 6.6]{blot-Cieutat16Completness}.

\proof The case when $p=1$ is obvious.
Set, for simplicity,  $d(f(t+s),g(t+s)):=h(t+s)$ and
\begin{equation}\label{eq:Stepanov_norm}
    \abs{H}_{\St^p(t)}:=\CCO{\int\limits_{0}^{1}\norm{H(t+s)}^pds}^{1/p}
\end{equation}
 for any $p$-locally integrable Banach-space valued function, $H$. Let us assume that $t\mapsto \abs{h}_{\St^p(t)}^p \in \ER(\R,\R,\mu)$, $p>1$. Using Hölder's inequality, we have
\begin{equation*}
\int_{[-r,r]}\abs{h}_{\St^p(t)}\,d\mu(t) \leq \CCO{\mu([-r,r])}^{1/q}\accol{\int_{[-r,r]}\abs{h}_{\St^p(t)}^p\,d\mu(t)}^{1/p}.
\end{equation*}
Thus,\begin{equation*}
\frac{1}{\mu([-r,r])}\int_{[-r,r]}\abs{h}_{\St^p(t)}\,d\mu(t)
\leq  \accol{\frac{1}{\mu([-r,r])}\int_{[-r,r]}\abs{h}_{\St^p(t)}^p\,d\mu(t)}^{1/p}
\end{equation*}
which leads to $\abs{h}_{\St^p(.)} \in \ER(\R,\R,\mu)$.
Conversely, set $M=\sup \limits_{t\in \R}\abs{h}_{\St^p(t)}^{p-1}<\infty$. We have
\begin{align*}
\frac{1}{\mu([-r,r])}\int_{[-r,r]}\abs{h}_{\St^p(t)}^p\,d\mu(t)
\leq & \frac{1}{\mu([-r,r])}\int_{[-r,r]}\abs{h}_{\St^p(t)}\abs{h}_{\St^p(t)}^{p-1}\,d\mu(t)\\
\leq & M\frac{1}{\mu([-r,r])}\int_{[-r,r]}\abs{h}_{\St^p(t)}\,d\mu(t)
\end{align*}
which means that $\abs{h}_{\St^p(.)}^p \in \ER(\R,\R,\mu).$

\finpr
From now on, we only deal with the weighted pseudo almost periodicity since pseudo almost periodicity is a special case.
\paragraph{Weighted pseudo almost periodic and Stepanov-like weighted pseudo almost periodic functions depending on a parameter}
Let $\mu$ be a Borel measure on $\R$ satisfying \eqref{eq:mu}. 
The definition of $\mu$-pseudo almost periodicity for functions with parameter we present here is just an adaptation of that presented in \cite{bed-chal-mel-prf-sma2015}. We recall it for the convenience of the reader. Let $\espX$ and  $\espY$ be two metric spaces.
\begin{itemize}
  \item
We say that  $f \in \BCont(\R\times\espX, \espY)$ is {\em $\mu$-pseudo almost periodic with respect to the first variable, uniformly
with respect to the second variable in compact subsets of $\espX$} if,
\begin{enumerate}
  \item for every $x\in\espX$,
$f(.,x)$ is $\mu$-pseudo almost periodic (in this case, we write $f\in \PAP(\R\times\espX,\espY,\mu)$),
  \item there exists  $g\in\APUc(\R\times\espX,\espY)$ such that:
$$\lim_{r\rightarrow \infty}1/\mu{([-r,r])}\int^{r}_{-r} d(f(t,x),g(t,x))\,d\mu(t)=0$$
uniformly with respect to $x$ in compact subsets of $\espX$.
\end{enumerate}
Denote by $\PAPUc(\R\times\espX,\espY,\mu)$ the collection of such functions.
\item  If $(\espY; \norm{.})$ is a Banach space, we say that a function $f \in \elp{p}(\R\times\espX,\espY)$ is {\em Stepanov $\mu$-ergodic with respect to the first variable, uniformly
with respect to the second variable in compact subsets of $\espX$} if,
\begin{enumerate}
  \item for every $x\in\espX$,
$f^b(.,x)\in \ER(\R,\espY,\mu)$,
  \item $f$ satisfies $$\lim_{r\rightarrow \infty}1/\mu{([-r,r])}\int^{r}_{-r} \abs{f(.,x)}_{\St^{p}(t)}\,d\mu(t)=0$$
uniformly with respect to $x$ in compact subsets of $\espX$, where $\abs{.}_{\St^{p}(t)}$ is defined by \eqref{eq:Stepanov_norm}.
\end{enumerate}
 The collection of such functions is denoted by $\StERUc{p}(\R\times\espX,\espY,\mu)$.
  \item A function $f \in \elp{p}(\R\times\espX,\espY)$ is said to be {\em Stepanov-like $\mu$-pseudo almost periodic with respect to the first variable, uniformly
with respect to the second variable in compact subsets of $\espX$} if there exists  $g\in\StAPUc{p}(\R\times\espX,\espY)$ such that
\begin{equation*}
\left[(t,x)\mapsto d\Big(f(t,x),g(t,x)\Big)\right]\in \StERUc{p}(\R \times \espX,\R,\mu).
\end{equation*}
The space of such functions is denoted by $\StPAPUc{p}(\R\times\espX,\espY,\mu)$.
\end{itemize}
\subsection{A superposition theorem in $\StPAP{p}(\R,\espE,\mu)$}\label{subsec:superposition}
In this section, we study some properties of parametric functions, especially Nemytskii's operators $\mathcal{N}(f)(x):=[t\mapsto f(t,x(t))]$  built on $f:\R \times \espE\rightarrow \espE$ in the space of Stepanov ($\mu$-pseudo) almost periodic functions.
In the following, we assume that $\frac{1}{p}=\frac{1}{q}+\frac{1}{r}$ with $p$, $q$ and $r \geq 1$, and we consider the parametric function $f:\R \times \espE \rightarrow \espE$, which satisfies the Lipschitz condition:
\begin{itemize}
\item[(Lip)] There exists a nonnegative function $L\in \St^r(\R)$ such that
$$d(f(t,u),f(t,v))\leq L(t) d(u,v), \,\, \forall t\in \R, \,\,u,v\in \espE.$$
\end{itemize}
Using compactness property of Stepanov almost periodicity given by Danilov (see Theorem~\ref{thm:Danilov-compacite} and Corollary~\ref{cor:Danilov-compacite}), we improve the composition theorem of Stepanov almost periodic functions given in
\cite[Theorem 2.1]{Ding-Long-N'guerekata2011}. We show in particular that in order to obtain that $\mathcal{N}(f)$ maps $\StAP{q}(\R,\espE)$ into $\StAP{p}(\R,\espE)$, the following compactness condition:
\begin{itemize}
\item[(Com)] There exists a subset $A \subset \R$ with $\meas(A)=0$ such that
$K:=\overline{\{x(t): t \in \R\backslash A\}}$ is a compact subset of $\espE$,
\end{itemize}
is not necessary.
Let us mention that Andres and Pennequin \cite[Lemma~3.2]{Andres-Pennequin2012} have shown that, given a Banach space $\espX$,  a continuous function $f:\espX\rightarrow \espX$ satisfying, for some $a,b>0$ and $\,p,q\geq1$, the following growth condition:$$\forall x \in \espX,\, \norm{f(x)} \leq a \norm{x}^{p/q}+b,$$ and an $\StAP{p}$-function $g:\R\rightarrow \espX$, the composition  $f\circ g$ is an  $\StAP{q}$-function.

We begin by a lemma which identifies the spaces $\StAP{p}(\R\times \espE,\espE)$ and $\StAPUc{p}(\R\times \espE,\espE)$ under Condition (Lip).
\begin{lemma} \label{lem:SPP-uniform-compact}
Let
$f:\R \times \espE \rightarrow \espE$ be a parametric function satisfying Condition $\lip$. Then,
$f\in \StAP{p}(\R\times \espE,\espE)$ if, and only if, $f\in \StAPUc{p}(\R\times \espE,\espE)$.
\end{lemma}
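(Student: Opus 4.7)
The forward inclusion $\StAPUc{p}(\R\times\espE,\espE) \subset \StAP{p}(\R\times\espE,\espE)$ is immediate by specializing the compact set to a singleton and does not require $\lip$. My plan is therefore to focus on the converse inclusion $\StAP{p}(\R\times\espE,\espE) \subset \StAPUc{p}(\R\times\espE,\espE)$. The guiding idea is a two-step approximation: cover a given compact set $K\subset\espE$ by a finite $\delta$-net, form common $\epsilon/3$-Stepanov almost periods at the finitely many centers, and transport the estimate to all of $K$ using the Lipschitz hypothesis.

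First I would fix $K\subset\espE$ compact and $\epsilon>0$. A quick preliminary observation: since $\tfrac1p=\tfrac1q+\tfrac1r$ forces $r\geq p$, H\"older's inequality on each unit interval gives $\St^r(\R)\subset \St^p(\R)$, so $L\in\St^p(\R)$ with $\norm{L}_{\St^p}\leq\norm{L}_{\St^r}<\infty$. I choose $\delta>0$ small enough that $\delta\,\norm{L}_{\St^p}<\epsilon/3$ and, by compactness of $K$, extract a finite $\delta$-net $\{x_1,\dots,x_N\}$ of $K$.

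Next, using the hypothesis $f\in\StAP{p}(\R\times\espE,\espE)$ at the finitely many $x_i$, each $f(\cdot,x_i)$ is $\St^p$-almost periodic, equivalently each Bochner transform $f^b(\cdot,x_i)$ is Bohr almost periodic into $\ellp{p}(0,1;\espE)$. By the Bochner relative-compactness criterion applied coordinatewise, the tupled map $t\mapsto\bigl(f^b(t,x_1),\dots,f^b(t,x_N)\bigr)$ is Bohr almost periodic into the product space, which is the standard way to see that the common translation set
\begin{equation*}
T:=\bigcap_{i=1}^N \St^p T\CCO{f(\cdot,x_i),\epsilon/3}
\end{equation*}
is relatively dense in $\R$. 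For any $\tau\in T$ and any $x\in K$, I pick $i$ with $d(x,x_i)<\delta$ and split
\begin{equation*}
D_{\St^p}^d\bigl(f(\cdot+\tau,x),f(\cdot,x)\bigr) \leq D_{\St^p}^d\bigl(f(\cdot+\tau,x),f(\cdot+\tau,x_i)\bigr) + D_{\St^p}^d\bigl(f(\cdot+\tau,x_i),f(\cdot,x_i)\bigr) + D_{\St^p}^d\bigl(f(\cdot,x_i),f(\cdot,x)\bigr).
\end{equation*}
The middle term is $\leq\epsilon/3$ by the choice of $\tau$; each side term is bounded by $\delta\,\norm{L}_{\St^p}<\epsilon/3$ using $\lip$ together with translation invariance $\norm{L(\cdot+\tau)}_{\St^p}=\norm{L}_{\St^p}$. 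Since the bound is independent of $x\in K$, this delivers the uniform $\epsilon$-estimate.

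The main (and minor) obstacle is the finite-intersection density, which is not entirely trivial in Stepanov sense but reduces to the classical Bohr fact via the Bochner transform equivalence already recorded in Section~\ref{sec:definition}. No appeal to Danilov's approximation or compactness results is needed for this particular lemma; those will become relevant only when proving the composition theorem that follows.
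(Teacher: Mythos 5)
Your proposal is correct. Note, however, that the paper does not actually write out a proof of this lemma: it only remarks that the argument is ``very similar'' to Lemma~3.1 of Fan, Liang and Xiao, which treats the almost automorphic analogue by a sequential (subsequence-extraction) argument. What you give instead is a direct, self-contained almost-period argument: a finite $\delta$-net of the compact set $K$, a relatively dense set of common $\epsilon/3$-Stepanov almost periods at the net points (justified, as you say, by tupling the Bochner transforms and invoking Bochner's criterion in the product space), and transport to all of $K$ via Condition $\lip$. All the supporting steps check out: the relation $\tfrac1p=\tfrac1q+\tfrac1r$ does force $r\geq p$, so H\"older on unit intervals gives $\norm{L}_{\St^p}\leq\norm{L}_{\St^r}$; the estimate $D_{\St^p}^d\bigl(f(\cdot+\tau,x),f(\cdot+\tau,x_i)\bigr)\leq d(x,x_i)\,\norm{L}_{\St^p}$ uses only the translation invariance of the Stepanov seminorm of $L$; and the finite intersection of translation sets is indeed relatively dense. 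Your route is arguably the more natural one for the almost periodic (as opposed to almost automorphic) setting, since it never needs to pass to sequences, while the cited Fan--Liang--Xiao style proof would instead extract, for an arbitrary sequence of translates, a common subsequence converging at the net points and then upgrade to uniform convergence on $K$ by the same Lipschitz estimate; the two are essentially interchangeable here, and your version has the merit of being checkable line by line against the definitions actually given in Section~\ref{sec:definition}.
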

The proof of this lemma is very similar to that
of Fan et al. \cite[Lemma~3.1]{Fan-Liang-Xiao2011}
which is the analogous result
for almost automorphic case.

Now, before giving the superposition theorem in $\StAP{p}(\R\times\espE ,\espE)$, we need some more notations. Let $u:\R\rightarrow\espE$ be a mesurable function, let $A\subset \R$ be a measurable set. We  denote by $u^{\Trunc{A}}$ the "truncated" function  from $\R$ to $\espE$, defined by $$ u^{\Trunc{A}}(t)=
\left\{ \begin{array}{lcl}
u(t)& \mbox{if} & t\in A\\
x_{0}&\mbox{if} & t\notin A.
\end{array}\right.
$$

We are now ready to present the superposition theorem in $\StAP{p}(\R\times\espE ,\espE)$:
\begin{theo}\label{thm:theorem-composition}
Let  $f \in \StAP{p}(\R\times\espE ,\espE)$, and assume that $f$ satisfies Condition {\lip}.
Then, for every  $u\in \StAP{q}(\R ,\espE)$, we have $f(.,u(.))\in \StAP{p}(\R ,\espE)$.
\end{theo}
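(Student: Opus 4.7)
The plan is to verify Bochner's criterion in $\St^p$ for $\Phi := f(\cdot, u(\cdot))$. Condition~\lip\ together with Lemma~\ref{lem:SPP-uniform-compact} upgrades $f$ to $\StAPUc{p}(\R\times\espE, \espE)$. Starting from an arbitrary sequence $(\alpha'_n)$, I would apply Proposition~\ref{prop:stepanov-ponctuel} and a diagonal extraction over a countable dense subset $D\subset\espE$ to produce a common subsequence $(\alpha_n)$ and a limit $f^\infty\in\StAP{p}(\R\times\espE, \espE)$, still $L$-Lipschitz by the argument yielding \eqref{eq:conti}, such that $D_{\St^p}^{d}(f(\cdot+\alpha_n, x), f^\infty(\cdot, x))\to 0$ for every $x\in D$. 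A further extraction, using $u\in\StAP{q}(\R,\espE)$, furnishes $u^\infty\in\StAP{q}(\R,\espE)$ with $D_{\St^q}^{d}(u(\cdot+\alpha_n), u^\infty)\to 0$. It then suffices to establish $D_{\St^p}^{d}(f(\cdot+\alpha_n, u(\cdot+\alpha_n)), f^\infty(\cdot, u^\infty(\cdot)))\to 0$.

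Decomposing by the triangle inequality I introduce
\begin{equation*}
A_n(t) := d\bigl(f(t+\alpha_n, u(t+\alpha_n)),\, f(t+\alpha_n, u^\infty(t))\bigr)
\end{equation*}
and
\begin{equation*}
B_n(t) := d\bigl(f(t+\alpha_n, u^\infty(t)),\, f^\infty(t, u^\infty(t))\bigr).
\end{equation*}
The first is handled by Condition~\lip\ and H\"older's inequality with the conjugate exponents $r/p$ and $q/p$ (legitimate because $1/p=1/q+1/r$), yielding
\begin{equation*}
\CCO{\int_x^{x+1} A_n(t)^p\,dt}^{1/p} \leq \|L\|_{\St^r}\cdot D_{\St^q}^{d}(u(\cdot+\alpha_n), u^\infty),
\end{equation*}
which vanishes uniformly in $x\in\R$.

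The main obstacle is $B_n$, since $u^\infty(t)$ depends on $t$ while the extracted convergence is only pointwise in the space variable. To bridge this gap I would appeal to Corollary~\ref{cor:Danilov-compacite} applied to $u^\infty\in\StAP{q}(\R,\espE)\subset\StAP{0}(\R,\espE)$ (the inclusion coming from \eqref{eq:caracterisation-Danilov}): for each $\delta>0$ there exist $T_\delta\in\cS(\R)$ and a compact $K_\delta\subset\espE$ with $\varkappa(T_\delta^c)<\delta$ and $u^\infty(T_\delta)\subset K_\delta$. On the good part $[x,x+1]\cap T_\delta$, I would cover $K_\delta$ by a finite $\eta$-net $\{y_1,\dots,y_N\}\subset D$ and use \lip\ with the single-point convergences $D_{\St^p}^{d}(f(\cdot+\alpha_n, y_i), f^\infty(\cdot, y_i))\to 0$ to dominate the contribution by $C\eta + o_n(1)$. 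On the bad part $[x,x+1]\cap T_\delta^c$, whose Lebesgue measure is at most $\delta$ uniformly in $x$, I would insert the anchor $x_0$ and write $B_n(t)\leq L(t+\alpha_n)\,d(u^\infty(t), x_0) + d(f(t+\alpha_n, x_0), f^\infty(t, x_0)) + L^\infty(t)\,d(u^\infty(t), x_0)$; a H\"older estimate together with the uniform integrability memberships $L, L^\infty\in M_r'(\R,\R)$, $d(u^\infty(\cdot),x_0)\in M_q'(\R,\R)$ and $f(\cdot, x_0), f^\infty(\cdot, x_0)\in M_p'(\R,\espE)$ (all from \eqref{eq:caracterisation-Danilov}) makes this contribution arbitrarily small with $\delta$. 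Letting $n\to\infty$, then $\eta\to 0$, then $\delta\to 0$ closes the estimate and establishes Bochner's criterion for $\Phi$ in $\St^p$.
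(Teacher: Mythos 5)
Your overall strategy --- verifying Bochner's sequential criterion for $\Phi:=f(\cdot,u(\cdot))$ rather than exhibiting relatively dense sets of common almost periods as the paper does --- is legitimate, and your treatment of $A_n$ and of the bad set is essentially sound. The core combinatorial device (Corollary~\ref{cor:Danilov-compacite} plus a finite net of a compact set carrying most of the range) is the same as in the paper's proof; what differs is the wrapper, and that wrapper is where the trouble lies.

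The genuine gap is in the estimate of $B_n$. On the good set you must control the third leg of the triangle inequality,
\begin{equation*}
d\bigl(f^\infty(t,y_{i(t)}),\,f^\infty(t,u^\infty(t))\bigr),
\end{equation*}
and on the bad set you invoke $L^\infty(t)\,d(u^\infty(t),x_0)$. Both require a \emph{pointwise} Lipschitz bound $d(f^\infty(t,x),f^\infty(t,y))\leq L^\infty(t)\,d(x,y)$ with $L^\infty\in\St^r(\R)$. You justify this by ``the argument yielding \eqref{eq:conti}'', but that computation only delivers the Lipschitz property \emph{in the Stepanov metric}, $D_{\St^p}^{d}\bigl(f^\infty(\cdot,x),f^\infty(\cdot,y)\bigr)\leq\norm{L}_{\St^r}d(x,y)$, which is useless once the two space arguments $y_{i(t)}$ and $u^\infty(t)$ vary with $t$. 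Producing a pointwise-Lipschitz version of $f^\infty$ is exactly the content of Proposition~\ref{lem:limits_inversion}(2), which rests on Koml\'os's theorem and is moreover stated for Banach-space targets (its Ces\`aro averages need a linear structure; for a metric target one must replace them by a $\liminf$ argument on the $K_n$). So as written this step is unjustified. Two repairs are available: invoke and adapt Proposition~\ref{lem:limits_inversion}(2) to obtain $L^\infty$; or, more economically, bypass $f^\infty$ and $u^\infty$ altogether by showing that $\bigl(f(\cdot+\alpha_n,u(\cdot+\alpha_n))\bigr)_n$ is Cauchy in $D_{\St^p}^{d}$ --- the same net argument then only ever calls on the pointwise Lipschitz property of $f$ itself, and completeness of $(\St^p(\R,\espE),D_{\St^p}^{d})$ plus Bochner's criterion finish the job. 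A secondary slip: $L,L^\infty\in M_r'(\R,\R)$ does \emph{not} follow from \eqref{eq:caracterisation-Danilov}, since $L$ is only assumed $\St^r$-bounded, not $\St^r$-almost periodic; this one is harmless, because in the H\"older estimate on the bad set it suffices that the $u^\infty$-factor be uniformly integrable ($u^\infty\in M_q'$) while the $L$-factor is merely bounded by $\norm{L}_{\St^r}$. The paper's proof sidesteps all of this by never forming limit functions.
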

\proof{
Fix $\epsilon >0$ and $x_0 \in \espE$. Let $u\in \StAP{q}(\R ,\espE)$.
In view of \eqref{eq:caracterisation-Danilov}, we have $u\in M_p'(\R,\espE)\cap \StAP{0}(\R,\espE)$, thus, there exists $\eta >0$ such that
$D_{\St^q}^d \big(u^{\Trunc{A}}(.),x_0\big)\leq \epsilon
$ for all measurable set $A\subset \R$ satisfying $\varkappa(A) \leq \eta$. For such $\eta$, using Corollary \ref{cor:Danilov-compacite}, we deduce that there exists a compact subset $\mathcal{K}_{\eta (\epsilon ) }\subset \espE$ such that
\[
\varkappa \{ t\in \R,\,  \ x(t)\notin \mathcal{K}_{\eta(\epsilon)} \} <\eta
\]%
and
\begin{equation}\label{eq:UI x supperposition thm}
  D_{\St^q}^d \bigr(u^{\Trunc{T_{ \epsilon}^{c}}}(.),x_0\bigl)
        \leq \frac{\epsilon}{6 \norm{L}_{\St^r}},
      \end{equation}
      where $T_\epsilon:=T_{\eta (\epsilon)}$ is the subset of $\R$ on
      which $u(t)\in\mathcal{ K}_{\eta( \epsilon ) }$
      (we exclude for simplicity the trivial case when  $\norm{L}_{\St^r}=0$).
The compactness of $\mathcal{K}_{\eta(\epsilon )}$ implies that there
exists a finite sequence $(x_{1}, x_{2},...,x_{n})$ in $\mathcal{K}_{\eta(\epsilon )}$ such that
\begin{equation}\label{eq:compactness supperposition thm_SAP}
\mathcal{K}_{\eta(\epsilon )} \subset \bigcup \limits_{1\leq i\leq
n}B\CCO{ x_{i},\frac{\epsilon }{6\norm{L}_{\St^r}}}.
\end{equation}
For $i=1,\dots,n$ and $t\in \R$, let
$$\ii(t)=
\begin{cases}
0&\text{ if }t \not\in T_\epsilon\\
\min\left\{i\in\{1,\dots,n\} \tq
  d(u(t),x_i)\leq\frac{\epsilon}{ 6\norm{L}_{\St^r} }\right\}
        &\text{ if }t \in T_\epsilon,
\end{cases}$$
and, for $i=0,\dots,n$ and $\xi\in\R$, let
$$A_{i,\xi}=\{t\in[\xi,\xi+1] \tq \ii(t)=i\}.$$

By Lemma \ref{lem:SPP-uniform-compact}, we have $f \in
\StAPUc{p}(\R\times \espE,\espE)$. Since $u\in \StAP{q}(\R ,\espE)$,
we can choose a common relatively dense set
$\mathcal{T}(f,u,\epsilon)\subset \R$ such that,
for $\tau \in \mathcal{T}(f,u,\epsilon)$,
\begin{equation} \label{eq:thm-com-SPP-x}
 D_{\St^q}^d (u(.+\tau),u(.)) \leq \frac{\epsilon}{3\norm{L}_{\St^r}}
\end{equation}
and
\begin{equation} \label{eq:thm-com-SPP-f}
 \sum_{i=0}^{n}D_{\St^p}^d \Big( f(.+\tau ,x_i),f(. ,x_{i})\Big)
    \leq \frac{\epsilon}{3}
\end{equation}
for all $\tau \in \mathcal{T}(f,u,\epsilon)$.
Let $\tau \in \mathcal{T}(f,u,\epsilon)$. We have
\begin{multline*}
D_{\St^p}^d \Bigl(f\CCO{.+\tau,u(.+\tau)},f(.,u(.))\Bigr)\\
\begin{aligned}
  \leq & D_{\St^p}^d \Bigl(f(.+\tau,u(.+\tau)),f(.+\tau,u(.))\Bigr)
  +D_{\St^p}^d \Bigl(f(.+\tau,u(.)),f(.,u(.))\Big) \\
  \leq & \norm{L}_{\St^r}D_{\St^q}^d \Big(u(.+\tau),u(.)\Bigr)
         + D_{\St^p}^d \Bigl(f(.+\tau,u(.)),f(.,u(.))\Bigr) \\
\leq & \frac{\epsilon}{3}+D_{\St^p}^d \Bigl(f(.+\tau,u(.)),f(.,u(.))\Bigr).
\end{aligned}
\end{multline*}
Now,
\begin{align*}
 & D_{\St^p}^d \Big(f(.+\tau,u(.)),f(.,u(.))\Big)
  =\sup_{\xi\in\R}\CCO{\int_\xi^{\xi+1}d^p(f(t+\tau,u(t)),f(t,u(t)))\,dt}^{1/p}\\
  \leq& \sup_{\xi\in\R}\CCO{\int_\xi^{\xi+1}
    \sum_{i=0}^{n}\un{A_i,\xi}(t)d^p(f(t+\tau,u(t)),f(t+\tau,x_i))\,dt}^{1/p}\\
      &+\sup_{\xi\in\R}\CCO{\int_\xi^{\xi+1}
      \sum_{i=0}^{n}\un{A_i,\xi}(t)d^p(f(t+\tau,x_i),f(t,x_i))\,dt}^{1/p}\\
        &+\sup_{\xi\in\R}\CCO{\int_\xi^{\xi+1}
      \sum_{i=0}^{n}\un{A_i,\xi}(t)d^p(f(t,x_i),f(t,u(t)))\,dt}^{1/p}\\
  \leq & \sup_{\xi\in\R}\CCO{\int_\xi^{\xi+1}
         L(t+\tau)\sum_{i=0}^{n}\un{A_i,\xi}(t)d^p(u(t),x_i)\,dt}^{1/p}\\
   &+\sum_{i=0}^{n}D_{\St^p}^d \Big( f(.+\tau ,x_i),f(. ,x_{i})\Big)\\
   &+\sup_{\xi\in\R}\CCO{\int_\xi^{\xi+1}
         L(t)\sum_{i=0}^{n}\un{A_i,\xi}(t)d^p(u(t),x_i)\,dt}^{1/p}\\
\leq &\sup_{\xi\in\R}\CCO{\int_\xi^{\xi+1}L^r(t+\tau)\,dt}^{1/r}
 \Biggl\lgroup
            D_{\St^q}^d \bigr(u^{\Trunc{T_{
                  \epsilon}^{c}}}(.),x_0\bigl)
            +\CCO{\int_\xi^{\xi+1}
              \sum_{i=1}^{n}\un{A_i,\xi}(t)d^q(u(t),x_i)\,dt}^{1/q}
          \Biggr\rgroup\\
       &+ \frac{\epsilon}{3}
        +\sup_{\xi\in\R}\CCO{\int_\xi^{\xi+1}L^r(t)\,dt}^{1/r}
        \Biggl\lgroup
        D_{\St^q}^d \bigr(u^{\Trunc{T_{ \epsilon}^{c}}}(.),x_0\bigl)
      + \CCO{ \int_\xi^{\xi+1}
         \sum_{i=1}^{n}\un{A_{i,\xi}}(t)d^q(u(t),x_i)\,dt }^{1/q}
                  \Biggr\rgroup\\
 \leq &2\norm{L}_{\St^r} \frac{\epsilon}{ 6\norm{L}_{\St^r}}+\frac{\epsilon}{3}
 =\frac{2\epsilon}{3}.
\end{align*}
Combining the preceding inequalities, we deduce that
$$D_{\St^p}^d \Bigl(f(.+\tau,u(.+\tau)),f(.,u(.))\Bigr)
\leq \frac{\epsilon}{3}+\frac{2\epsilon}{3}=\epsilon.$$
\finpr

\begin{theo}\label{thm:superposition_SPAP}
Let $\mu$ be a Borel measure on $\R$ satisfying \eqref{eq:mu} and Condition $(\rm{H})$, and let $p\geq 1$. Assume that $F\in  \StPAPUc{p}(\R \times \espE,\espE,\mu)$. Let $G$ be in $\StAPUc{p}(\R \times \espE,\espE)$ such that
\begin{equation}
\left[(t,x)\mapsto H(t,x)=d\Big(F(t,x),G(t,x)\Big)\right]\in \StERUc{p}(\R \times \espE,\R,\mu).
\end{equation}
Assume that Condition {\lip}  holds for $F$ and $G$. If $X$ is $\St^q$-weighted pseudo almost periodic, then {\em $F(.,X(.)) \in \StPAP{p}(\R,\espE,\mu)$}.
\end{theo}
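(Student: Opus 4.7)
The plan is to extract a Stepanov almost periodic skeleton of $X$ and use the given decomposition of $F$ to identify the almost periodic part of $F(\cdot,X(\cdot))$. Since $X\in\StPAP{q}(\R,\espE,\mu)$, there exists $Y\in\StAP{q}(\R,\espE)$ with $t\mapsto d(X(t),Y(t))\in\StER{q}(\R,\R,\mu)$. I take $\Phi(t):=G(t,Y(t))$ as the candidate. As $G\in\StAPUc{p}\subset\StAP{p}$ satisfies $\lip$ and $Y\in\StAP{q}(\R,\espE)$, Theorem~\ref{thm:theorem-composition} gives $\Phi\in\StAP{p}(\R,\espE)$.

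The remaining task is to prove $t\mapsto\abs{d(F(\cdot,X(\cdot)),\Phi(\cdot))}_{\St^p(t)}\in\ER(\R,\R,\mu)$. The triangle inequality in $\espE$ together with $\lip$ for $F$ yields the pointwise majoration
\begin{equation*}
d(F(t,X(t)),G(t,Y(t)))\leq L_F(t)\,d(X(t),Y(t))+H(t,Y(t)),
\end{equation*}
so by Minkowski on $[t,t+1]$ the Stepanov seminorm splits into two pieces. H\"older's inequality with $1/p=1/q+1/r$ gives $\abs{L_F\cdot d(X,Y)}_{\St^p(t)}\leq\norm{L_F}_{\St^r}\abs{d(X,Y)}_{\St^q(t)}$, which is $\mu$-ergodic by the choice of $Y$, so the first piece averages to zero.

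The delicate step, and the main obstacle, is showing the ergodicity of $t\mapsto\abs{H(\cdot,Y(\cdot))}_{\St^p(t)}$, because the uniform ergodicity of $H$ is only available on compact subsets of $\espE$ whereas $Y$ need not have compact range. I would mimic the compactness trick from the proof of Theorem~\ref{thm:theorem-composition}: given $\epsilon>0$, apply Corollary~\ref{cor:Danilov-compacite} (valid since $\StAP{q}\subset\StAP{0}$) to produce a set $T_\epsilon\in\cS(\R)$ with $\varkappa(T_\epsilon^c)<\epsilon$ and a compact $K_\epsilon\subset\espE$ containing $Y(T_\epsilon)$, cover $K_\epsilon$ by finitely many $\epsilon$-balls centered at $y_1,\dots,y_n$, and assign to each $s\in T_\epsilon$ the index $\ii(s)$ of a ball containing $Y(s)$. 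The Lipschitz inequalities for $F$ and $G$ then give $H(s,Y(s))\leq(L_F(s)+L_G(s))\epsilon+H(s,y_{\ii(s)})$ on $T_\epsilon$ and $H(s,Y(s))\leq(L_F(s)+L_G(s))\,d(Y(s),x_0)+H(s,x_0)$ on $T_\epsilon^c$.

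Taking Stepanov $p$-seminorms and $\mu$-averaging over $[-r,r]$: the $\epsilon(L_F+L_G)$ contribution is $O(\epsilon)$; each of the finitely many $\abs{H(\cdot,y_i)}_{\St^p(t)}$ and $\abs{H(\cdot,x_0)}_{\St^p(t)}$ vanishes in the limit by the uniform-on-compacts ergodicity of $H$ applied to $K_\epsilon\cup\{x_0\}$; and the $T_\epsilon^c$-contribution is controlled by H\"older together with the uniform $\St^q$-integrability $Y\in M_q'(\R,\espE)$ furnished by Danilov's characterization~\eqref{eq:caracterisation-Danilov}, making it uniformly small because $\varkappa(T_\epsilon^c)<\epsilon$. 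Letting $\epsilon\to 0$ concludes the argument; the coupling of Danilov's compactness corollary with the uniform-on-compacts ergodicity of $H$ is precisely the Stepanov substitute for the pointwise compactness argument in the classical Bohr superposition theorem for pseudo almost periodic functions.
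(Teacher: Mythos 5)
Your proposal is correct and follows essentially the same route as the paper: reduce to the skeleton $Y\in\StAP{q}(\R,\espE)$ of $X$, take $G(\cdot,Y(\cdot))$ as the almost periodic part via Theorem~\ref{thm:theorem-composition}, split the remainder by the triangle inequality into a Lipschitz-controlled term (killed by the ergodicity of $d(X,Y)$ and H\"older) and the term $H(\cdot,Y(\cdot))$, and handle the latter exactly as the paper does, by combining Corollary~\ref{cor:Danilov-compacite}, a finite $\epsilon$-net of the compact set, the uniform-on-compacts ergodicity of $H$, and the uniform Stepanov integrability of $Y$ on the small exceptional set. The only cosmetic difference is that you fold the intermediate point $F(t,Y(t))$ into a single Lipschitz estimate rather than displaying it explicitly.
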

\proof Since  $X \in \StPAP{q}(\R,\espE,\mu)$, there exists  $Y\in \StAP{q}(\R ,\espE)$ such that
\begin{equation}\label{eq:thm super pseudo Z}
 Z(.)=d\Big(X(.),Y(.)\Big) \in \StER{p}(\R,\R,\mu).
\end{equation}
To show that $F(.,X(.)) \in \StPAP{p}(\R,\espE,\mu)$, it is enough to have $$d\Big(F(.,X(.)),G(.,Y(.))\Big)\in \StER{p}(\R,\R,\mu),$$ since by Theorem~\ref{thm:theorem-composition}, the function $G(.,Y(.)) \in \StAP{p}(\R,\espE)$.
We have $$d\Big(F(t,X(t)),G(t,Y(t))\Big)\leq d\Big(F(t,Y(t)),G(t,Y(t))\Big)+d\Big(F(t,Y(t)),F(t,X(t))\Big).$$ Clearly, $d\Big(F(.,Y(.)),F(.,X(.)\Big) \in \StER{p}(\R,\R,\mu)$. Indeed, for every $r>0$,
\begin{equation*}
\frac{1}{\mu{([-r,r])}}\int^{r}_{-r}
  \D_{\ellp{p}}^d\Big(F^b(t,Y^b(t)),F^b(t,X^b(t)\Big)\,d\mu(t)
\leq  \frac{1}{\mu{([-r,r])}}\int^{r}_{-r}\norm{L}_{\St^r}\D_{\ellp{q}}^d \Big(Y^b(t),X^b(t)\Big)\,d\mu(t)\\
\end{equation*}
and hence, using \eqref{eq:thm super pseudo Z}, it follows that
$$\lim_{r\rightarrow \infty}\frac{1}{\mu{([-r,r])}}\int^{r}_{-r}
  \D_{\ellp{p}}^d\Big(F^b(t,Y^b(t)),F^b(t,X^b(t)\Big)\,d\mu(t)=0.$$
  Now, we claim that $d\Big(F(.,Y(.)),G(.,Y(.))\Big)\in \StER{p}(\R,\R,\mu)$. In fact, let $\epsilon>0$ and $x_0 \in \espE$. Since $Y\in \StAP{p}(\R,\espE)\subset M_p'(\R,\espE)$,  there exists $\delta:=\delta(\epsilon) >0$ such that
$D_{\St^q}^d (Y^{\Trunc{A}}( .) ,x_0)\leq \epsilon
$ for all measurable set $A\subset \R$ satisfying $\varkappa(A) \leq \delta$. Thus, using Corollary \ref{cor:Danilov-compacite}, we deduce that there exists a compact subset $\mathcal{K}_{\delta } \subset \espE$ such that
\[
\varkappa \{ t\in \R,\,  \ Y(t)\notin \mathcal{K}_{\delta} \} <\delta
\]%
and
\begin{equation}\label{eq:UI x supperposition pseudo thm}
D_{\St^q}^d (Y^{\Trunc{T_{\delta}^c}}( .) ,x_0)\leq \frac{\epsilon}{4 \norm{L}_{\St^r}}
\end{equation}
where $T_\delta:=T_{\delta (\epsilon)}$ is the subset of $\R$ on which $Y(t)\in\mathcal{ K}_{\delta }$.
The compactness of $\mathcal{K}_{\delta(\epsilon )}$ implies that there exist $y_{1}, y_{2},...,y_{m} \in \mathcal{K}_{\epsilon}$ such that
\begin{equation}\label{eq:compactness supperposition thm}
\mathcal{K}_{\epsilon } \subset \bigcup \limits_{1\leq i\leq
m}B\CCO{ y_{i},\frac{\epsilon }{4\norm{L}_{\St^r}}}.
\end{equation}
Hence,
  \begin{multline*}
\frac{1}{\mu{([-r,r])}}\int^{r}_{-r}
  \D_{\ellp{p}}^d\Big(F^b(t,Y^b(t)),G^b(t,Y^b(t)\Big)\,d\mu(t)\\
\begin{aligned}
\leq & \max_{1\leq i\leq m}\frac{1}{\mu{([-r,r])}}\int^{r}_{-r}\D_{\ellp{p}}^d\Big(F^b(t,Y^b(t)),F^b(t,y_i)\Big)\,d\mu(t)\\
+ & \max_{1\leq i\leq m}\frac{1}{\mu{([-r,r])}}\int^{r}_{-r}\D_{\ellp{p}}^d\Big(F^b(t,y_i),G^b(t,y_i)\Big)\,d\mu(t)\\
+ & \max_{1\leq i\leq m}\frac{1}{\mu{([-r,r])}}\int^{r}_{-r}\D_{\ellp{p}}^d\Big(G^b(t,y_i),G^b(t,Y^b(t))\Big)\,d\mu(t)\\
:=&J_1(r)+J_2(r)+J_3(r).
\end{aligned}
\end{multline*}
We have from \eqref{eq:UI x supperposition pseudo thm} and \eqref{eq:compactness supperposition thm}:
\begin{align*}
J_1(r)+J_3(r)\leq& 2\norm{L}_{\St^r} \max_{1\leq i\leq m} D_{\St^q}^d\Big(Y(.),y_i\Big)\\
\leq & 2 \norm{L}_{\St^r}\max_{1\leq i\leq m}\accol{D_{\St^q}^d \Big(Y^{\lfloor T_{\delta},y_{i} \rfloor}( .),y_i\Big)+D_{\St^q}^d \Big(Y^{\lfloor T_{\delta}^{c},y_{i}\rfloor}( .),y_i\Big)}\leq \epsilon.
\end{align*}
Let us estimate $J_2(r)$. Using the fact that the parametric function $H$ is in $\StERUc{p}(\R\times \espE,\R,\mu)$, we have
$$J_2(r)\leq \sup_{y\in \mathcal{K}_\epsilon}\frac{1}{\mu{([-r,r])}}\int^{r}_{-r}\D_{\ellp{p}}^d\Big(F^b(t,y),G^b(t,y)\Big)\,d\mu(t),$$ from which we deduce that $\lim_{r\rightarrow \infty}J_2(r)=0$.
Finally, since $\epsilon$ is arbitrary, we obtain that
$$\lim_{r\rightarrow \infty}\frac{1}{\mu{([-r,r])}}\int^{r}_{-r}
  \D_{\ellp{p}}^d\Big(F^b(t,Y^b(t)),G^b(t,Y^b(t)\Big)\,d\mu(t)=0.$$
  This proves that $F(.,X(.))\in\StPAP{p}(\R,\espE,\mu)$.
\finpr

\subsection{Weighted pseudo almost periodicity
for stochastic processes}
\label{sec:stochproc}
This subsection is devoted to some definitions related to $\mu$-pseudo almost periodicity for a stochastic process. As observed in \cite{bedouhene-mellah-prf2012}, \cite{bed-chal-mel-prf-sma2015} and \cite{Tudor95ap_processes}, there are various modes and extension of almost periodicity for a stochastic process. We recall here the more relevant for applications to stochastic differential equations, namely, the $\mu$-pseudo almost periodicity in $p$-UI distribution, $p\geq 0$, proposed in \cite{bed-chal-mel-prf-sma2015}.  For the convenience of the reader, we repeat some notations and definitions from \cite{bed-chal-mel-prf-sma2015} and \cite{DaPrato-Tudor95}, thus making our exposition self-contained. Let $(\espX,\norm{.})$ be a separable Banach space, and let $(\esprob,\tribu,\prob)$ be a probability space. For a random variable $X : (\esprob,\tribu,\prob)\rightarrow \espX$, we denote by $\law{{X}}$ its law (or distribution) and by $\expect (X)$ its expectation. The space of all random variables from $\esprob$ to $\espX$ is denoted by $\ellp{0}(\esprob,\prob,\espX)$.  Note that $\CCO{\ellp{0}(\esprob,\prob,\espX),d_{\text{\tiny Prob}}}$, where $d_{\text{\tiny Prob}}$ is the distance that generates the topology of
convergence in probability,
$$d_{\text{\tiny Prob}}(U,V)=\expect \CCO{\norm{U-V}\wedge1},$$
for $U,\,V \in \ellp{0}(\esprob,\prob,\espX)$, is complete. For $p\geq 1$, let $\ellp{p}(\esprob,\prob,\espX)$ stand for the space of all $\espX$-valued random variables, $X$,  such that $\expect \norm{X}^p:=\int_\Omega \norm{X}^p d\prob <+\infty$. We equip this space with its natural norm that we denote by $\norm{.}_p:=\CCO{\expect \norm{X}^p}^{1/p}$.

Let $\laws{\espX}$  be the set of Borel probability measures on $\espX$. We endowed $\laws{\espX}$ with tow topologies. The first one is that }of narrow (or weak) convergence. For a given $\varphi \in \BCont(\espX,\R)$ and $\upsilon,\nu \in \laws{\espX}$, we define
\begin{align*}
\norm{\varphi}_{\lr}
&= \sup\Bigl\{\dfrac{\varphi(x)-\varphi(y)}{d(x,y)} \tq x \neq y\Bigl\}\\
\norm{\varphi}_{\bl} &= \max\{\norm{\varphi}_\infty,\norm{\varphi}_{\lr}\}\\
d_{\bl}(\upsilon, \nu) &= \sup_{\norm{\varphi}_{\bl}\leq 1}
\int_\espX \varphi \,d(\upsilon-\nu).
\end{align*}
The space $\CCO{\laws{\espX},d_{\bl}}$ is a Polish space and $d_{\bl}$ generates the weak topology on $\laws{\espX}$.  An other useful topology related to the metric space $\laws{\espX}$ is that induced by the convergence in Wasserstein distance. Let $p\geq 1$. For any $\upsilon, \nu \in \laws{\espX}$, the Wasserstein distance of order $p$, $\WASS^p$,
is defined by
\begin{equation}\label{eq:wass2}
\WASS^p(\upsilon, \nu)
=\inf\accol{ \CCO{\expect \bigl(\norm{X-Y}^p\bigr)}^{1/p}, \law{X}=\upsilon, \, \law{Y}=\nu}
\end{equation}
where the notation means that the infimum (actually, a minimum)
is taken over all pairs
$(X,Y)$ of $\espX$-valued random variables defined on some probability
space (not necessarily $(\esprob,\tribu,\prob)$) such that
$\law{X}=\upsilon$ and $\law{Y}=\nu$. For any interval $[a,b]$, we denote by $\WASS^p_{[a,b]}$ the Wasserstein distance between the distributions of two continuous $\espX$-valued stochastic processes $X=(X_t)_{t\in\R}$ and $Y=(Y_t)_{t\in\R}$, viewed as $\Cont([a, b],\espX)$-valued random variables. Let $(X_n)\subset \ellp{p}(\esprob,\prob,\espX)$ be a sequence of random vectors of $\espX$ and let $X \in \ellp{p}(\esprob,\prob,\espX)$. The convergence in Wasserstein distance can be characterized as follows (see e.g.~\cite{villani09oldnew}): The sequence $\CCO{\law{X_n}}$ converges to $\law{X}$ for $\WASS^p$ if and only if
\begin{enumerate}
  \item $(X_n)$ converges to $X$ in distribution, i.e. $d_{\bl}(\law{X_n}, \law{X})\rightarrow 0$ as $n\rightarrow \infty$;
  \item the family $(\norm{X_n}^p)$ is uniformly integrable ($p$-UI).
\end{enumerate}
 Let us mention that $\CCO{\laws{\espX},\WASS^p}$ is also a Polish space.

Now, in connection with the results of Section~\ref{sec:SDE}, let us recall the following definitions for a stochastic process  $X=(X_t)_{t\in\R}$ with
values in $\espX$, defined on the probability space $(\esprob,\tribu,\prob)$.
 \begin{definition}[\cite{Tudor95ap_processes}]{\em
\begin{enumerate}
  \item We say that $X$ is {\em almost periodic in one-dimensional
  distributions},  and write $X\in \APOD(\R,\espX)$, if the mapping $$\law{{X}(.)}:\left\{
                                    \begin{array}{lll}
                                      \R & \rightarrow & \laws{\espX} \\
                                      t & \mapsto & \law{{X}(t)}
                                    \end{array}
                                  \right.$$ is almost periodic.
  \item If $X$ has continuous trajectories, we say that $X$ is {\em almost periodic in distribution}, and write $X\in \APD(\R,\espX)$,
if the mapping  $$\law{\transl{X}(.)}:\left\{
                                    \begin{array}{lll}
                                      \R & \rightarrow & \laws{\Cont_k(\R,\espX)} \\
                                      t & \mapsto & \law{\transl{X}(t)}
                                    \end{array}
                                  \right.$$
 is almost periodic, where $\transl{X}(t)$ stands for the random variable $X(t+.)$ with values in $\Cont(\R,\espX)$.
\end{enumerate}}
\end{definition}
One can define, in a same manner, the almost periodicity in $\CCO{\laws{\espX},\WASS^p}$ and $\CCO{\laws{\Cont_k(\R,\espX)},\WASS^p_{[a,b]}}$. We denote by $\WASS^p\!\APOD(\R,\espX)$ and $\WASS^p_{[a,b]}\!\APD(\R,\espX)$ the classes obtained respectively.

Let us also recall the definition of  almost periodicity in
$p$-UI distribution\footnote{ This notion was introduced in \cite{bed-chal-mel-prf-sma2015} under the name "almost periodic (automorphic) in $p$-distribution". We have modified the terminology, and introduced the symbol "$p$-UI" defined above to bring out from the notion "$p$-distribution" the condition of "$p$-uniform integrability". This seems clearer and more evocative.}  introduced in
  \cite{bed-chal-mel-prf-sma2015} under a different name, which requires uniform integrability property:
\begin{definition}[\cite{bed-chal-mel-prf-sma2015}]{\em
Let $p\geq 0$. An $\espX$-valued stochastic process, $X$, is called
 {\em almost periodic in $p$-UI one-dimensional distribution} (resp. {\em almost periodic in $p$-UI distribution, if $X$ is continuous}) if
\begin{enumerate}[(i)]
\item $X\in\APOD(\R,\espX)$ (resp. $X\in\APD(\R,\espX)$);
\item if $p>0$, the family $(\norm{X(t)}^p)_{t\in\R}$ is uniformly integrable ($p$-UI).
\end{enumerate}}
\end{definition}
We denote by  $\APOD^p(\R,\espX)$ and $\APD^p(\R,\espX)$ the set of $\espX$-valued processes
which are almost periodic in $p$-UI one-dimensional distribution and in $p$-UI distribution, respectively. Note that when $p=0$, one obtains $\APOD^0(\R,\espX)=\APOD(\R,\espX)$ and $\APD^0(\R,\espX)=\APD(\R,\espX)$.
It should be mentioned that if $X\in \APD^p(\R,\espX)$, the mapping $t\mapsto X(t)$, $\R\rightarrow\ellp{p}(\esprob,\prob,\espX)$,
is continuous.
  \begin{remark}{\em
Let us clarify the relation between the classes $\APOD^p(\R,\espX)$, $\APD^p(\R,\espX)$ and the corresponding almost periodicity in the Wasserstein distance sense. We use Bochner's double sequences criterion. Let  $(\alpha'_n)\subset \R$ and $(\beta'_n)\subset \R$ be arbitrary sequences. Let $(\alpha_n) \subset (\alpha'_n)$ and $(\beta_n)\subset (\beta'_n)$ be the subsequences provided by Bochner's double sequences criterion. We just need to compare the corresponding UI. Our reasoning is based on the following observation that a family is UI if, and only if, from any sequence extracted from this family, one can extract a subsequence that is UI.
\begin{enumerate}
  \item Observe that if $X :\R\rightarrow \ellp{p}(\esprob,\prob,\espX)$, $p>0$, is bounded, we have $$X\in \APOD^p(\R,\espX) \Leftrightarrow X\in \WASS^p\!\APOD(\R,\espX).$$  Indeed, as the sequences $(\alpha'_n)$ and $(\beta'_n)$ are arbitrary, it is easy to check that the family $(\norm{X(t)}^p)_{t\in\R}$ is UI if, and only if, for any $t\in \R$, the sequences $(\norm{X(t+\alpha_n+\beta_m)}^p)_{n,m}$ and $(\norm{X(t+\alpha_n+\beta_n)}^p)_{n}$ are too.
  \item Let $X \in \CUB\bigl(\R,\ellp{p}(\esprob,\prob,\espX)\bigl)$. Then, $$ X \in \WASS^p_{[a,b]}\!\APD(\R,\espX)\Rightarrow X\in  \APD^p(\R,\espX).$$
      For, let $(s'_n)\subset \R$ be any sequence. Since $X \in \WASS^p_{[a,b]}\!\APD(\R,\espX)$, one can extract a subsequence $(s_n)\subset (s'_n)$ such that for any interval $[a,b]$, the sequence $\CCO{\sup_{t\in[a,b]}\norm{X(t+s'_n)}^p}_{n}$ is UI. In particular, the sequence  $\norm{X(s'_n)}^p$ is UI.  It follows that the family $(\norm{X(t)}^p)_{t\in\R}$ is UI.
\end{enumerate}
}
\end{remark}
\begin{definition}[\cite{bed-chal-mel-prf-sma2015}]{\em
Let $p\geq 0$.
We say that $X$ is
{\em $\mu$-pseudo almost periodic in  $p$-UI distribution}
if $X$ can be written
$$X=Y+Z, \text{ where }Y\in \APD^p(\R,\espX) \text{ and }
Z\in\ER(\R,\ellp{p}(\esprob,\prob,\espX),\mu).$$}
\end{definition}
The set of $\espX$-valued processes which are $\mu$-pseudo almost periodic in $p$-UI distribution is denoted by
$\PAPD^p(\R,\espX)$.

Of course, thanks to the Bochner transform, it is not difficult to extend these definitions in  Stepanov sense for a stochastic process. This will not be useful in this article, since our aim is to improve and generalize the results obtained by Da Prato and Tudor \cite{DaPrato-Tudor95}, see also \cite{KMRF12averaging} and \cite{bed-chal-mel-prf-sma2015}, and to show the absence of purely Stepanov almost periodic in distribution to stochastic differential equations. This is the purpose of Section~\ref{sec:SDE} below.


\section{Almost periodic solutions to stochastic differential
  equations with Stepanov almost periodic coefficients}
\label{sec:SDE}

Let
 $(\h_1, \|.\|_{\h_1})$ and $(\h_2, \|.\|_{\h_2})$ be separable
Hilbert spaces, and let us denote by
 $\espL(\h_1, \h_2)$ (or $\espL(\h_1)$ if
 $\h_1= \h_2$) the space of all bounded linear operators from
 $\h_1$ to $\h_2$, and by $\espLH(\h_1, \h_2)$ the space of Hilbert-Schmidt
operators from $\h_1$ to $\h_2$. For a symmetric nonnegative operator $Q\in  \espLH(\h_1, \h_2)$ with finite trace, we
assume that $W(t)$, $t\in \R$, is a $Q$-Wiener process with values on $\h_1$  defined on a stochastic basis
 $(\esprob,\tribu,(\tribu_t)_{t\in \R},\prob)$. We denote by $\trace Q$ the trace of $Q$.

Throughout this section, we consider the following abstract semilinear stochastic differential equation
\begin{equation}\label{eq:SDE}
 dX_t = AX(t)\,dt + F(t, X(t))\,dt + G(t, X(t))\,dW(t), \ t\in\R
\end{equation}
where $A: \Dom(A)\subset\h_2\rightarrow \h_2$ is a densely defined closed
(possibly unbounded) linear operator, and
$F : \R\times\h_2 \rightarrow\h_2$,
and $G : \R\times\h_2\rightarrow \espLH(\h_1, \h_2)$
are measurable functions (not necessarily continuous).
To discuss the existence and uniqueness of
($\mu$-pseudo) almost periodic in $2-$UI distribution solutions to equation \eqref{eq:SDE}, we consider the following requirements.
\begin{enumerate}
 \item[$\rm(H1)$] \label{cond:semigrp} The operator $A : \Dom(A)\rightarrow \h_2$ generates a contraction $C_0$-semigroup $(S(t))_{t\geq0}$ \cite{dapratozabczyk14book}, that is, there exists $\delta>0$ such that
$$\|S(t)\|_{\espL(\h_2)}\leq e^{-\delta t},\, \text{ for all } t\geq0.$$
  \item[$\rm(H2)$] \label{cond:croissance}The mappings $F$ and $G$ satisfy a sublinear growth condition: there exists a positive constant  $M$ such that
$$\|F(t,x)\|_{\h_{2}} + \|G(t,x)\|_{\espLH(\h_1, \h_2)} \leq M (1+\|x\|_{\h_2})$$for all $t\in \R$ and $x\in \h_2$.
\item[$\rm(H3)$] \label{cond:lipschitz} The mappings $F$ and $G$ are Lipschitz in the sense that there exists a positive function $K(.) \in \St^{p}(\R)$, with $p>2$,  such that
$$\|F(t,x) - F(t,y)\|_{\h_2} + \|G(t,x) - G(t,y)\|_{\espLH(\h_1, \h_2)}\leq
K(t)\|x - y\|_{\h_2}$$ for all $t\in \R$
and $x, y \in \h_2$.
\item[$\rm(H4)$] \label{cond:SAP}
$F \in \StAP{2}(\R\times \h_2, \h_2) $ and
$G \in \StAP{2}(\R\times \h_2, \espLH(\h_1, \h_2))$.
\end{enumerate}
\subsection{Almost periodic solutions in $2$-UI distribution}

In order to study the
$\mu$-pseudo almost periodicity of
solutions to \eqref{eq:SDE}, we need a result on almost periodicity. In what follows, let $q>0$ with $\frac{1}{2}=\frac{1}{q}+\frac{1}{p}$. Recall that $\CUB\bigl(\R,\ellp{2}(\prob, \h_2)\bigl)$, the Banach space of square-mean continuous and $\ellp{2}$-bounded
stochastic processes, is endowed with the norm
$$\|X\|_\infty^2 = \sup_t\expect\|X(t)\|_{\h_2}^2.$$

The main result concerning the existence (and uniqueness) of an almost periodic in $2$-UI distribution solution to \eqref{eq:SDE}, under Stepanov almost periodicity condition on the coefficients, is established by the next theorem.
\begin{theo}\label{theo:main}
\begin{enumerate}
  \item Under assumptions $\rm{(H1)-(H3)}$, Eq. \eqref{eq:SDE} admits a unique mild solution $X$ to \eqref{eq:SDE} in $\CUB\bigl(\R,\ellp{2}(\prob, \h_2)\bigl)$ provided that
 $$\theta_{\St} :=\CCO{\dfrac{2\norm{K}^2_{\St^2}}{\delta(1-e^{-\delta})}+\dfrac{2\norm{K}^2_{\St^2} \trace Q}{1-e^{-2\delta}}}<1.$$
Moreover, $X$ has a.e.~continuous
trajectories and can be written as,
\begin{equation}\label{eq:mildsol}
X(t) = \int^{t}_{-\infty}S(t-s)F\bigl(s, X(s)\bigl)ds +
\int^{t}_{-\infty}S(t-s)G\bigl(s, X(s)\bigl)dW(s), \,t\in \R.
\end{equation}
  \item Suppose, in addition, that $\rm{(H4)}$ is fulfilled and
$$\theta'_{\St}:= \frac{4}{3q\delta}\Bigl(\bigl(3\beta_1\bigr)^{\frac{q }{2}}+\bigl(3\beta_2\bigr)^{\frac{q }{2}}\Bigr)<1,$$ with $$\beta_1: = \frac{4}{\delta}\CCO{\frac{\norm{K}_{\St^{p}}^{p}}{1-e^{-\frac{p\delta}{4}}}}^{\frac{2}{p}},\,
\beta_2:= 4\trace Q\CCO{\frac{\norm{K}_{\St^{p}}^{p}}{1-e^{-\frac{p\delta}{2}}}}^{\frac{2}{p}}$$
then $X$ is almost
periodic in $2$-UI distribution.
\end{enumerate}
\end{theo}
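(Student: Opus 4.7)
The plan splits naturally along the two claims of the theorem.

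For Part 1 (existence and uniqueness), I would run a Banach fixed-point argument on the complete metric space $\CUB\bigl(\R,\ellp{2}(\prob,\h_2)\bigr)$. Define
\begin{equation*}
\Lambda X(t) := \int_{-\infty}^{t} S(t-s) F(s, X(s))\,ds
+ \int_{-\infty}^{t} S(t-s) G(s, X(s))\,dW(s).
\end{equation*}
Using the exponential decay of $S(t)$ from (H1) and the sublinear growth (H2), one checks that $\Lambda$ sends $\CUB\bigl(\R,\ellp{2}(\prob,\h_2)\bigr)$ into itself and admits a continuous modification. For contraction, bound $\expect\norm{\Lambda X(t) - \Lambda Y(t)}_{\h_2}^{2} \leq 2 J_1(t) + 2 J_2(t)$, where $J_1$ is estimated by Cauchy--Schwarz with the weight $e^{-\delta(t-s)}$ and $J_2$ by It\^o's isometry with weight $e^{-2\delta(t-s)}$. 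The integrals $\int_{-\infty}^{t} e^{-\delta(t-s)} K^2(s)\,ds$ and $\int_{-\infty}^{t} e^{-2\delta(t-s)} K^2(s)\,ds$ are bounded by $\norm{K}_{\St^2}^{2}/(1-e^{-\delta})$ and $\norm{K}_{\St^2}^{2}/(1-e^{-2\delta})$ respectively, by splitting over unit intervals $[t-n-1,t-n]$ and using Stepanov boundedness. Summing produces exactly $\theta_{\St}$, so $\theta_{\St} < 1$ delivers a unique fixed point, which by construction has the form \eqref{eq:mildsol}.

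For Part 2 (almost periodicity in $2$-UI distribution), the two defining conditions must be checked separately: Bohr almost periodicity of the trajectory distributions and uniform integrability of $(\norm{X(t)}_{\h_2}^{2})_{t\in\R}$. For the first, I use Bochner's double sequence criterion on $t \mapsto \law{\widetilde{X}(t)}$. Given arbitrary sequences $(\alpha_n'), (\beta_n') \subset \R$, applying Proposition~\ref{prop:stepanov-ponctuel} to $F$ and $G$ along with a diagonal extraction over a countable dense subset of $\h_2$ yields common subsequences $(\alpha_n), (\beta_n)$ and limits $F^{\infty}, G^{\infty}, F^{\infty\infty}, G^{\infty\infty}$ and the diagonal limits $F^{\ast}, G^{\ast}$, all inheriting the same Lipschitz constant $K$ and the Stepanov almost periodicity of $F$ and $G$, with $F^{\infty\infty}$ and $F^{\ast}$ (respectively $G^{\infty\infty}$ and $G^{\ast}$) coinciding almost everywhere by Bochner's criterion. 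The shifted processes $X(t+\alpha_n+\beta_m)$ and $X(t+\alpha_n+\beta_n)$ satisfy mild equations with shifted coefficients; invoking the contraction estimate of Part 1 on the limiting equations then forces convergence in $\ellp{2}$ to the common solution $Y$ of the limit equation, which yields the double sequence criterion for $\law{\widetilde{X}(t)}$.

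The hard step is justifying the passage to the limit in the random integrals $\int_{-\infty}^{t} S(t-s) F(s+\alpha_n, X(s+\alpha_n))\,ds$ and its stochastic analogue, since the Stepanov convergence $F(\cdot+\alpha_n, x) \to F^{\infty}(\cdot, x)$ holds only for each fixed $x \in \h_2$, while in the mild formula the argument is itself a random process that depends on $n$. The resolution uses Koml\'os's theorem \cite{komlos67gene}: after a further subsequence extraction, the random fields $F(\cdot+\alpha_n, X(\cdot+\alpha_n))$ converge in the Ces\`aro sense almost surely on compact intervals, which combined with the Lipschitz control allows identification of the limit as $F^{\infty}(\cdot, Y(\cdot))$, and analogously for $G$. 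The $2$-UI property then follows from a higher-moment bound $\sup_t \expect \norm{X(t)}_{\h_2}^{q} < \infty$ with $\tfrac{1}{q} = \tfrac{1}{2} - \tfrac{1}{p}$; this is obtained by applying the Burkholder--Davis--Gundy inequality to the stochastic integral and H\"older with the weight $e^{-\delta(t-s)}$ to the drift in \eqref{eq:mildsol}, producing exactly the constants $\beta_1$ and $\beta_2$, so the self-consistent estimate closes under $\theta'_{\St} < 1$. De la Vall\'ee Poussin's criterion then upgrades this moment bound to $2$-uniform integrability, which together with $\widetilde{X} \in \APD(\R,\h_2)$ gives $X \in \APD^{2}(\R,\h_2)$.
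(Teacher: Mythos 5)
Your Part~1 is correct and follows the paper's argument essentially verbatim: a fixed point for $\Gamma$ on $\CUB\bigl(\R,\ellp{2}(\prob,\h_2)\bigr)$, Cauchy--Schwarz for the drift, It\^o's isometry for the diffusion, and splitting the convolutions over unit intervals to exploit the Stepanov boundedness of $K$, which yields exactly $\theta_{\St}$. The gap is in Part~2, and it is concentrated in the role you assign to $\theta'_{\St}$. In the paper, $\beta_1$, $\beta_2$ and the condition $\theta'_{\St}<1$ do \emph{not} come from a $q$-th moment bound on $X$ obtained via Burkholder--Davis--Gundy. They arise in the proof that the shifted solutions $X^n$ (fixed points of the operators with coefficients $F(\cdot+\gamma_n,\cdot)$ and $G(\cdot+\gamma_n,\cdot)$) converge to $X^\infty$ in quadratic mean: one bounds $\expect\norm{X^n(t)-X^\infty(t)}^2$ by error terms $I_3^n(t)+I_4^n(t)$ plus two convolution terms containing $K^2(s+\gamma_n)\,\expect\norm{X^n(s)-X^\infty(s)}^2$; since $K$ is only $\St^p$-bounded and not essentially bounded, these must be split by H\"older's inequality with exponents $p/2$ and $q/2$ (where $1/2=1/p+1/q$), which is what produces $\beta_1$ and $\beta_2$, and the inequality is then closed by convexity of $u\mapsto u^{q/2}$ and a Gronwall lemma applied to $\bigl(\expect\norm{X^n(t)-X^\infty(t)}^2\bigr)^{q/2}$ --- this is precisely where $\theta'_{\St}<1$ is needed. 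Your substitute, ``invoking the contraction estimate of Part~1 on the limiting equations forces convergence in $\ellp{2}$'', does not work: $X^n$ and $X^\infty$ are fixed points of \emph{different} operators, so the contraction constant $\theta_{\St}$ gives no control on their difference, and without the H\"older/Gronwall step the convergence of the laws (hence the double-sequence criterion) is unproved.

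Two further points. First, the paper applies Koml\'os's theorem (Proposition~\ref{lem:limits_inversion}) to the \emph{deterministic} parametric functions $F(\cdot+\alpha_n,x)$ and to the Lipschitz coefficients $K(\cdot+\alpha_n)$, obtaining along a subsequence pointwise a.e.\ convergence $F(s+\gamma_n,x)\to F^\infty(s,x)$ simultaneously for all $x$, together with the transfer of (H2)--(H3) to $F^\infty,G^\infty$; the error terms $I_3^n,I_4^n$ are then killed by a uniform-integrability argument (converse of Vitali's theorem plus La Vall\'ee Poussin) applied to $\norm{F(s+\gamma_n,X^\infty(s))-F^\infty(s,X^\infty(s))}^2$. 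Applying Koml\'os directly to the random fields $F(\cdot+\alpha_n,X(\cdot+\alpha_n))$, as you propose, is neither justified nor needed. Second, the passage from one-dimensional distributions to distributions of paths, and the $2$-UI property, are obtained in the paper from the $\ellp{2}$ convergence of the shifted processes via Proposition~\ref{prop:conv en dist} (a Stepanov version of Da Prato--Tudor), not from a $q$-th moment estimate; your BDG route could in principle deliver uniform integrability, but it would require a separate a priori estimate in $\ellp{q}$ that you have not carried out, and it would not ``produce exactly the constants $\beta_1$ and $\beta_2$''.
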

Before giving the proof of Theorem \ref{theo:main}, let us state two results:
\begin{lemma}\label{lem:int-Step-vs-Bohr}
Let $K:\R\rightarrow \R$ be a nonnegative  $\St^p$-bounded (resp. Stepanov almost periodic) function, then the
function $$ \kappa(t)=\int_{-\infty}^{t}e^{-\delta(t-s)}K^p(s)ds$$ is uniformly bounded (resp. Bohr almost
periodic), and we have $$\sup_{t\in \R} \kappa(t)\leq \frac{\norm{K}_{\St^p}^{p}}{1-e^{-\delta}}.$$
\end{lemma}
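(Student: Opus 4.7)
The plan is to split the argument into the uniform bound and the Bohr almost-periodicity, reducing both to estimates on intervals of length one. For the bound I would decompose $\kappa(t)$ into the unit-block sum
\begin{equation*}
\kappa(t)=\sum_{n=0}^{\infty}\int_{t-n-1}^{t-n}e^{-\delta(t-s)}K^{p}(s)\,ds.
\end{equation*}
On each block the exponential factor is dominated by $e^{-\delta n}$, and the integral of $K^{p}$ over an interval of length one is bounded by $\norm{K}_{\St^{p}}^{p}$ by the very definition of the Stepanov norm. Summing the resulting geometric series yields $\sup_{t\in\R}\kappa(t)\leq \norm{K}_{\St^{p}}^{p}/(1-e^{-\delta})$.

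For the almost-periodicity assertion I would first note that $\kappa$ is continuous: writing $\kappa(t)=e^{-\delta t}\int_{-\infty}^{t}e^{\delta s}K^{p}(s)\,ds$ exhibits it as a continuous function times an integral which is absolutely continuous in $t$ as the antiderivative of the locally integrable function $e^{\delta s}K^{p}(s)$. The substantive step is the Bohr property, and my plan is to reduce it to $\St^{1}$-almost periodicity of $K^{p}$: once that reduction is available, the same unit-block decomposition used above gives, for any $\tau\in\R$,
\begin{equation*}
\abs{\kappa(t+\tau)-\kappa(t)}\leq \sum_{n=0}^{\infty}e^{-\delta n}\int_{t-n-1}^{t-n}\abs{K^{p}(s+\tau)-K^{p}(s)}\,ds\leq \frac{D_{\St^{1}}^{d}\bigl(K^{p}(.+\tau),K^{p}(.)\bigr)}{1-e^{-\delta}}
\end{equation*}
uniformly in $t$, so that a relatively dense set of $\St^{1}$-almost periods of $K^{p}$ becomes a relatively dense set of Bohr almost periods of $\kappa$.

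The main obstacle is therefore the passage from $\St^{p}$-almost periodicity of $K$ to $\St^{1}$-almost periodicity of the pointwise $p$-th power $K^{p}$. Here I would use the elementary mean-value inequality $\abs{a^{p}-b^{p}}\leq p\max(a,b)^{p-1}\abs{a-b}$, valid for $a,b\geq 0$ and $p\geq 1$, together with H\"older's inequality with conjugate exponents $p/(p-1)$ and $p$ on each interval $[\xi,\xi+1]$. Given a Stepanov $\epsilon$-almost period $\tau$ of $K$, this produces
\begin{equation*}
\int_{\xi}^{\xi+1}\abs{K^{p}(s+\tau)-K^{p}(s)}\,ds\leq p\,\bigl(2\norm{K}_{\St^{p}}^{p}\bigr)^{(p-1)/p}\,\epsilon
\end{equation*}
uniformly in $\xi\in\R$, after using $\max(K(s+\tau),K(s))^{p}\leq K^{p}(s+\tau)+K^{p}(s)$ to control the $\max$. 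Dilating the tolerance $\epsilon$ then shows $K^{p}\in\StAP{1}(\R,\R)$, and everything beyond this H\"older estimate is a routine interchange of sum and integral with the geometric weight $e^{-\delta n}$.
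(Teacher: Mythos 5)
Your proof is correct and complete. The paper itself does not actually prove this lemma --- it simply declares the proof ``very simple'' and defers to Rao's papers --- so there is no argument of the authors' to compare against; but the route you take is the standard one that those references and the rest of Section~3 implicitly rely on: the unit-block decomposition $\kappa(t)=\sum_{n\geq 0}\int_{t-n-1}^{t-n}e^{-\delta(t-s)}K^p(s)\,ds$ with the geometric weight $e^{-\delta n}$ gives both the bound $\norm{K}_{\St^p}^p/(1-e^{-\delta})$ and the transfer of Stepanov almost periods of $K^p$ into Bohr almost periods of $\kappa$. The one point that genuinely needs an argument --- that $K\in\StAP{p}(\R,\R)$ implies $K^p\in\StAP{1}(\R,\R)$ --- you handle correctly via $\abs{a^p-b^p}\leq p\max(a,b)^{p-1}\abs{a-b}$ and H\"older with exponents $p/(p-1)$ and $p$, the bound $\max(a,b)^p\leq a^p+b^p$ keeping the constant uniform in $\xi$; this step degenerates harmlessly when $p=1$. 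Your explicit check that $\kappa$ is continuous is also needed for Bohr almost periodicity and is often glossed over. In short, you have supplied the proof the paper omits, and it is sound.
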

\proof The proof is very simple, see for instance \cite{rao_almost_1999,rao_higher-order_2004}. 
\finpr
The following proposition is based on the application of Koml\'os's theorem \cite{komlos67gene}.
\begin{prop}\label{lem:limits_inversion}
1.
Let $(\espmes,\trmes,\lambda)$ be a $\sigma$-finite measure space,
and let $\espB$ be a
separable Banach space.
Let $(\ff_n)$ be a sequence of mappings from $\espmes\times\espB$ to
$\espB$ satisfying
\begin{enumerate}[(i)]
\item each $\ff_n$ is measurable, 

\item for every $\elmes\in\espmes$, the sequence $(\ff_n(\elmes,.))$ is
  equicontinuous,

\item there exists a measurable mapping $\fg :\,\espmes\times\espB\rightarrow
  \espB $, 
such that
\begin{equation*}
(\forall x\in\espB)\
  \lim_{n\rightarrow\infty}
    \int_\espmes \norm{\ff_n(\elmes,x)-\fg(\elmes,x)}\,d\lambda(\elmes)=0.
\end{equation*}
\setcounter{deplus}{\value{enumi}}
\end{enumerate}
Then there exists a subsequence $(\fg_{n})$ of $(\ff_n)$,
a modification $\fgg$ of $\fg$,
and a $\lambda$-negligible subset $\mnegl$ of $\espmes$ such that
\begin{equation*}
(\forall x\in\espB)\
(\forall \elmes\in\espmes\setminus\mnegl)\
  \lim_{n\rightarrow\infty}
    \fg_n(\elmes,x)=\fgg(\elmes,x),
\end{equation*}
and such that, for every $\elmes\in\espmes\setminus\mnegl$,
the mapping $\fg(\elmes,.)$ is
  continuous.

2. With the same hypothesis as in 1.,
assume now that $\espmes=\R$ is the set of real numbers,
$\trmes$ its Borel $\sigma$-algebra, and $\lambda$ the
Lebesgue-measure.
Assume furthermore that there exists a sequence $(\fK_n)$ of measurable mappings
from $\R$ to $\R^{+}$, and a number $p\geq 1$,
satisfying
\begin{enumerate}[(i)]
\setcounter{enumi}{\value{deplus}}
\item
\begin{equation*}
(\forall n\geq 1)\
(\forall x,y\in\espB)\
(\forall \elmes\in\R)\
\norm{\ff_n(\elmes,x)-\ff_n(\elmes,y)}\leq \fK_n(\elmes)\norm{x-y},
\end{equation*}
\item
\begin{equation*}
\Mmes:=\sup_{n\geq 1}\sup_{\elmes\in\R} \int_\elmes^{\elmes+1}{(\fK_{n})}^{p}(\elmess)\,d\elmess<\infty.
\end{equation*}
\end{enumerate}
Then we can extract the subsequence $(\fg_{n})$ in such a way that
there exists a measurable mapping $\fKg
:\,\R\rightarrow\R^{+}$
and a
$\lambda$-negligible subset $\mnegl$ of $\R$
such that
\begin{equation}\label{eq:limlip}
(\forall x,y\in\espB)\
(\forall \elmes\in\espmes\setminus\mnegl)\
\norm{\fg(\elmes,x)-\fg(\elmes,y)}\leq \fKg(\elmes)\norm{x-y},
\end{equation}
with
\begin{equation*}
\sup_{\elmes\in\R} \int_\elmes^{\elmes+1}\fKg^p(\elmess)\,d\elmess\leq \Mmes.
\end{equation*}
\end{prop}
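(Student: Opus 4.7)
The plan is a Cantor-diagonal extraction paired with the equicontinuity hypothesis to propagate $\lambda$-a.e.~convergence from a countable dense set to all of $\espB$, with Fatou's lemma handling the limiting Lipschitz constant in Part~2. Komlós's theorem is available to yield, from the $L^1$ bounds in (iii) and the local $L^p$ bounds in (v), a single subsequence along which we gain pointwise control; but the extraction can equivalently be obtained from the classical fact that $L^1$-convergence passes to a.e.~convergence along a subsequence, combined with a diagonal argument, so Komlós is invoked only as a source of a.e.~convergence from integral convergence.

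\emph{Part~1.} Fix a countable dense subset $D=\{x_k\}_{k\in\N}$ of $\espB$. For each $k$, (iii) gives $\ff_n(\cdot,x_k)\to \fg(\cdot,x_k)$ in $L^1(\lambda)$, hence a subsequence converges a.e.~off a $\lambda$-null set $\mnegl_k$. Extracting diagonally over $k$ produces a single subsequence $(\fg_n)$ of $(\ff_n)$ and a null set $\mnegl_0:=\bigcup_k\mnegl_k$ such that $\fg_n(\elmes,x_k)\to \fg(\elmes,x_k)$ for every $k$ and every $\elmes\notin\mnegl_0$. Fix now $\elmes\notin\mnegl_0$: by (ii), the family $(\fg_n(\elmes,\cdot))_n$ is equicontinuous on $\espB$, and since it converges on the dense set $D$, a standard $\epsilon$-$\delta$ argument using the common modulus of equicontinuity shows it converges at every $x\in\espB$. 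Set $\fgg(\elmes,x):=\lim_n\fg_n(\elmes,x)$ for $\elmes\notin\mnegl_0$, and extend arbitrarily otherwise; the limit inherits the equicontinuity modulus, so $\fgg(\elmes,\cdot)$ is continuous for every $\elmes\notin\mnegl_0$. For each fixed $x\in\espB$, the $L^1$-convergence $\fg_n(\cdot,x)\to\fg(\cdot,x)$ forces a further subsequence to converge a.e.~to $\fg(\cdot,x)$; by construction that subsequence also converges a.e.~to $\fgg(\cdot,x)$, yielding $\fgg(\cdot,x)=\fg(\cdot,x)$ $\lambda$-a.e., so $\fgg$ is a modification of $\fg$. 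Take $\mnegl:=\mnegl_0$.

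\emph{Part~2.} Keep the subsequence $(\fg_n)$ from Part~1 and define $\fKg(\elmes):=\liminf_n\fK_n(\elmes)$, which is measurable and nonnegative. Fatou's lemma on each bounded interval, combined with hypothesis (v), gives
$$\int_\xi^{\xi+1}\fKg^p(\elmess)\,d\elmess \;\leq\; \liminf_n\int_\xi^{\xi+1}\fK_n^p(\elmess)\,d\elmess \;\leq\;\Mmes$$
for every $\xi\in\R$, so $\fKg$ lies in the required Stepanov class and is finite outside a $\lambda$-null set $\mnegl_1$. Enlarge $\mnegl$ to contain $\mnegl_1$. For $\elmes\notin\mnegl$ and any $x,y\in\espB$, select (depending on $\elmes$) a subsequence $n_j\to\infty$ with $\fK_{n_j}(\elmes)\to\fKg(\elmes)$; applying (iv) along $n_j$ and passing to the limit using $\fg_{n_j}(\elmes,\cdot)\to\fgg(\elmes,\cdot)$ from Part~1 yields $\norm{\fgg(\elmes,x)-\fgg(\elmes,y)}\leq \fKg(\elmes)\norm{x-y}$, which is \eqref{eq:limlip} with $\fg$ understood as its continuous modification $\fgg$.

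\emph{Main obstacle.} The delicate point is obtaining an exceptional set $\mnegl$ that is independent of $x$, since (iii) on its own produces a null set that may depend on $x$; this independence is the exact service rendered by coupling the diagonal extraction on $D$ with the equicontinuity in (ii), which alone lets $\fgg(\elmes,\cdot)$ be defined pointwise and continuously on $\espB$ off a single common null set. The remainder of the argument is standard subsequence bookkeeping plus Fatou's inequality.
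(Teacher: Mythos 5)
Your argument is correct, and it splits naturally into a part that mirrors the paper and a part that does not. Part~1 is essentially the paper's proof: extract a.e.-convergent subsequences on a countable dense set via the standard fact that $L^1$-convergence yields a.e.\ convergence along a subsequence, diagonalize to get one subsequence and one null set, use equicontinuity to propagate convergence and continuity from the dense set to all of $\espB$, and identify the pointwise limit $\fgg(\cdot,x)$ with $\fg(\cdot,x)$ a.e.\ by uniqueness of a.e.\ limits (the paper does this last identification through an integral estimate, to the same effect). Part~2 takes a genuinely different route. The paper applies Koml\'os's theorem to $(\fK_n^p)$ on the intervals $[k,k+1]$ to extract a subsequence whose Ces\`aro means converge a.e.\ to some $\fKg^p$, stably under further extraction; it then works with the Ces\`aro averages $\fgC_n=\frac1n\sum_{j\le n}\fg_j$, which still converge pointwise and are Lipschitz with constant $\frac1n\sum_{j\le n}\fKg_j\leq\fKgC_n$, and finishes with Fatou. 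You instead set $\fKg=\liminf_n\fK_n$ along the already-extracted indices and pass to the liminf directly in the Lipschitz inequality, with Fatou supplying the Stepanov bound. Your route is more elementary: it dispenses with Koml\'os entirely and with the synchronization of indices between $(\fg_n)$ and $(\fKg_n)$, and it produces a pointwise smaller Lipschitz function; indeed no subsequence selection in $\elmes$ is even needed, since $\liminf$ of both sides of hypothesis (iv) already gives \eqref{eq:limlip}. What the Koml\'os construction buys in exchange is a $\fKg$ arising as an honest a.e.\ Ces\`aro limit that persists along any further subsequence (a liminf can only increase under further extraction); the paper records this stability, but it is not needed for the statement itself. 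Two minor points of bookkeeping: the liminf must be taken over the Lipschitz constants re-indexed to match the extracted subsequence $(\fg_n)$, so that each $\fg_n$ is paired with its own $\fK$; and, as you note, \eqref{eq:limlip} is really obtained for the modification $\fgg$ rather than for $\fg$ itself --- the paper's proof has exactly the same feature.
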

\proof

1. For every $x\in\espB$, we can find a subsequence $(\fg^{(x)}_n)$ of
$(\ff_n)$ and a $\lambda$-negligible set $\mnegl_x$ such that
\begin{equation*}
(\forall \elmes\in\espmes\setminus\mnegl_x)\
  \lim_{n\rightarrow\infty}
    \fg^{(x)}_n(\elmes,x)=\fg(\elmes,x).
\end{equation*}
Let $\mdense$ be a dense countable subset of $\espB$.
Using a diagonal procedure,
we can find a subsequence $(\fg_{n})$ of $(\ff_n)$ and a
$\lambda$-negligible subset $\mnegl$ of $\espmes$ such that
\begin{equation*}
(\forall y\in\mdense)\
(\forall \elmes\in\espmes\setminus\mnegl)\
  \lim_{n\rightarrow\infty}
    \fg_n(\elmes,y)=\fg(\elmes,y).
\end{equation*}
On the other hand,
for every $\elmes\in\espmes$ and every $x\in\espB$, we have,
by equicontinuity of $\fg_n(\elmes,.)$,
\begin{equation}\label{eq:equicont}
  \lim_{y\rightarrow x}\sup_{n}
      \norm{\fg_n(\elmes,y)-\fg_n(\elmes,x)}=0.
\end{equation}
Let $\elmes\in\espmes\setminus\mnegl$, and let $x\in\espB$.
Using the uniformity in \eqref{eq:equicont},
we deduce, by a classical result on interchange of limits, that, for any
$x\in\espB$,
\begin{equation}\label{eq:interversion}
\lim_{n\rightarrow\infty}
    \fg_n(\elmes,x)
=\lim_{n\rightarrow\infty}
  \lim_{\substack{y\rightarrow x\\y\in\mdense}}
      \fg_n(\elmes,y)
=  \lim_{\substack{y\rightarrow x\\y\in\mdense}}
 \lim_{n\rightarrow\infty}
      \fg_n(\elmes,y)
= \lim_{\substack{y\rightarrow x\\y\in\mdense}}\fg(\elmes,y).
\end{equation}
Note that, for $\elmes\in\espmes\setminus\mnegl$, the calculation
\eqref{eq:interversion} shows that $f(u,.)$
is continuous on $\mdense$.
Let us define $\fgg :\,\espmes\times\espB\rightarrow\espB$ by
$$\fgg(\elmes,x)=\left\{\begin{array}{ll}
\displaystyle
\lim_{n\rightarrow\infty}
    \fg_n(\elmes,x)
=\lim_{\substack{y\rightarrow
    x\\y\in\mdense}}\fg(\elmes,y)
 &\text{ for }
\elmes\in\espmes\setminus\mnegl\text{ and }x\in\espB,\\
0&\text{ for }
\elmes\in\mnegl\text{ and }x\in\espB.
\end{array}\right.$$
This definition is consistent, thanks to
\eqref{eq:interversion}. Furthermore, $\fgg(\elmes,.)$ is continuous
on $\espB$ for every $\elmes\in\espmes$.
Finally,
since $\fgg(\elmes,y)=\fg(\elmes,y)$
for all $(\elmes,y)\in(\espmes\setminus\mnegl)\times\mdense$,
we have, for any $x\in\espB$,
\begin{multline*}
\int_\espmes \norm{\fgg(\elmes,x)-\fg(\elmes,x)}\,d\lambda(\elmes)\\
\leq \lim_{\substack{y\rightarrow
    x\\y\in\mdense}}
\big\lgroup
   \int_\espmes \norm{\fg_n(\elmes,y)-\fgg(\elmes,x)}\,d\lambda(\elmes)
+
   \int_\espmes \norm{\fg_n(\elmes,y)-\fg(\elmes,x)}\,d\lambda(\elmes)
\big\rgroup=0,
\end{multline*}
which proves that $\fgg(\elmes,x)=\fg(\elmes,x)$ for $\lambda$-almost
every $\elmes\in\espmes$.

\medskip
2. By an application of Koml\'os's theorem \cite{komlos67gene} on each
interval $[k,k+1]$, where $k$ is an integer,
and using a diagonal procedure, we can extract
a subsequence $(\fKg_n)$ of
$(\fK_n)$ and a mapping $\fKg :\,\R\rightarrow\R^{+}$
such that
\begin{equation*}
  \lim_{n\rightarrow\infty}
          \frac{1}{n}\sum_{j=1}^{n}\fKg_j^p(\elmes)=\fKg^p(\elmes)\quad
          \text{for $\lambda$-a.e. }\elmes\in\R,
\end{equation*}
and such that this almost sure Ces\`aro convergence holds true for any
further
subsequence of $(\fKg_n)$
(the negligible set on which the convergence does not hold depends on
the subsequence).
We can thus ask for the sequences $(\fg_n)$ and $(\fKg_n)$ to have
the same indices.
Denote, for $n\geq 1$,
\begin{equation*}
\fgC_n=\frac{1}{n}\sum_{j=1}^{n}\fg_j,\quad
\fKgC_n=\CCO{\frac{1}{n}\sum_{j=1}^{n}\fKg_j^p}^{1/p}.
\end{equation*}
There exists a $\lambda$-negligible subset $\mnegl$ of $\R$ such that
\begin{equation}\label{eq:1lim}
(\forall x\in\espB)\
(\forall \elmes\in\R\setminus\mnegl)\
  \lim_{n\rightarrow\infty}
    \fgC_n(\elmes,x)=\fg(\elmes,x) \text{ and }
  \lim_{n\rightarrow\infty}
    \fKgC_n(\elmes)=\fKg(\elmes).
\end{equation}
On the other hand, by the triangle inequality, we have also:
\begin{equation}\label{eq:2lip}
(\forall n\geq 1)\
(\forall x,y\in\espB)\
(\forall \elmes\in\R)\
\norm{\fgC_n(\elmes,x)-\fgC_n(\elmes,y)}\leq \fKgC_n(\elmes)\norm{x-y}.
\end{equation}
We deduce \eqref{eq:limlip} from \eqref{eq:1lim} and \eqref{eq:2lip}.
Furthermore, by Fatou's lemma, we have
\begin{equation*}
\sup_{\elmes\in\R} \int_\elmes^{\elmes+1}\fKg^p(\elmess)\,d\elmess
\leq \sup_{\elmes\in\R} \liminf_{n\rightarrow\infty}\int_\elmes^{\elmes+1}\fKgC_n^p(\elmess)\,d\elmess
\leq\Mmes.
\end{equation*}

\finpr

 \proofof{Theorem \ref{theo:main}.} Clearly, the process
$$X(t) = \int^{t}_{-\infty}T(t-s)F\bigl(s, X(s)\bigl)ds +
\int^{t}_{-\infty}T(t-s)G\bigl(s, X(s)\bigl)dW(s)$$ satisfies
 $$X(t) = T(t-a)X(a)+ \int^{t}_{a}T(t-s)F\bigl(s, X(s)\bigl)ds
+ \int^{t}_{a}T(t-s)G\bigl(s, X(s)\bigl)dW(s)$$ for all $t\geq a$ for each $a\in \R$ , and hence $X$ is a mild
solution to \eqref{eq:SDE}.
We introduce an operator $\Gamma$ by
$$\Gamma X(t) = \int^{t}_{-\infty}T(t-s)F\bigl(s, X(s)\bigl)ds +
\int^{t}_{-\infty}T(t-s)G\bigl(s, X(s)\bigl)dW(s).$$
The rest of the proof is shared naturally into three steps.

\medskip
\noindent{\em \textbf{Step~1: existence and uniqueness of a mild solution in $\CUB\bigl(\R,\ellp{2}(\prob, \h_2)\bigl)$.}} Let us show that $\Gamma$ has a unique fixed point. For this purpose we need to show that $\Gamma$
maps $\St^2\bigl(\R,\ellp{2}(\prob,
\h_2)\bigl)$ into $\CUB\bigl(\R,\ellp{2}(\prob, \h_2)\bigl)$. Let $X \in \St^2\bigl(\R,\ellp{2}(\prob,
\h_2)\bigl)$.
Put $\Gamma=\Gamma_1+\Gamma_2$, where
$$(\Gamma_1X)(t)=\int^{t}_{-\infty}T(t-s)F\bigl(s, X(s)\bigl)ds$$ and $$(\Gamma_2X)(t)=\int^{t}_{-\infty}T(t-s)G\bigl(s,
X(s)\bigl)dW(s).$$ Using conditions (H1) and (H3),  the functions $F$ and $G$ satisfy the properties $f(.):=F(.,X(.)) \in
\St^2\bigl(\R,\ellp{2}(\prob, \h_2)\bigl)$ and $g(.):=G(.,X(.)) \in \St^2\bigl(\R,\ellp{2}(\prob,\espLH(\h_1,\h_2)\bigl)$.
Let us introduce the following processes, for each $n\geq 1$,
$$(\Gamma_{1,n}X)(t)=\int_{t-n}^{t-n+1}T(t-s)f(s)ds$$ and $$(\Gamma_{2,n} X)(t)=\int_{t-n}^{t-n+1}T(t-s)g(s)dW(s).$$
Clearly, for each $n$, $\Gamma_{1,n}X \in \Cont(\R,\ellp{2}(\prob, \h_2))$. Likewise for $\Gamma_{2,n}X$, for which the continuity is a property of the
stochastic integral. To show the boundedness of $\Gamma_{1,n}X $ and $\Gamma_{2,n}X $ for each fixed $n\geq 1$, we use standard arguments.
By
Hölder's inequality, we have, for any $t\in\R$, and $n \geq 1$
\begin{align*}
\expect\norm{(\Gamma_{1,n} X)(t)}_{\h_2}^2&\leq \expect\CCO{ \int^{t-n+1}_{t-n}\norm{T(t-s)}\norm{f(s)}_{\h_2} ds}^2\\
&\leq \delta^{-1}\int^{t-n+1}_{t-n}e^{-\delta (t-s)}\expect \norm{f(s)}^2_{\h_2} ds\\
&\leq \delta^{-1} e^{-\delta (n-1)}\int^{t-n+1}_{t-n}\expect \norm{f(s)}^2_{\h_2} ds,
\end{align*}
 which leads to
\begin{equation*}
\norm{\Gamma_{1,n} X}_{\infty}^2\leq \delta^{-1}e^{-\delta (n-1)}\norm{f}_{\St^2}^2.
\end{equation*}
Since the series $\sum_{n=1}^\infty e^{-2\delta (n-1)}\norm{f}_{\St^2}^2$ is
convergent, it follows that
\begin{equation}\label{eq:CUB}
 \Gamma_1X:=\sum_{n=1}^\infty \Gamma_{1,n} X \in \CUB\bigl(\R,\ellp{2}(\prob, \h_2)\bigl).
\end{equation}
By Itô's isometry, we have for every $t\in\R$ and $n \geq 1$,
\begin{multline*}
\begin{aligned}
\expect\norm{(\Gamma_{2,n} X)(t)}_{\h_2}^2&= \trace Q \int^{t-n+1}_{t-n}\expect\norm{T(t-s)}^2\norm{g(s)}_{\espLH(\h_1, \h_2)} ^2ds\\
& \leq \trace Q \int^{t-n+1}_{t-n}e^{-2\delta(t-s)}\expect \norm{g(s)}^2_{\espLH(\h_1, \h_2)} ds\\
&\leq \trace Qe^{-2\delta (n-1)}\norm{g}_{\St^2}^2.
\end{aligned}
\end{multline*}
This shows that $\Gamma_{2,n} X \in \CUB\bigl(\R,\ellp{2}(\prob, \h_2)\bigl)$ for each $n\geq 1$. Since
$\sum_{n=1}^\infty e^{-2\delta (n-1)}<+\infty$, the series $\sum_{n=1}^\infty (\Gamma_{2,n}
X)(t)$ is uniformly convergent on $\R$. Thus
\begin{equation}\label{eq:CUBW}
 \Gamma_2X:=\sum_{n=1}^\infty \Gamma_{2,n} X \in
\CUB\bigl(\R,\ellp{2}(\prob, \h_2)\bigl).
\end{equation}
From (\ref{eq:CUB}) and (\ref{eq:CUBW}), we deduce that $\Gamma$
maps $\St^2\bigl(\R,\ellp{2}(\prob,\h_2)\bigl)$ into $\CUB\bigl(\R,\ellp{2}(\prob, \h_2)\bigl)$.

Let us show that $\Gamma$ is a contraction operator.
We have, for any $t\in\R$ and $X,Y \in \CUB\bigl(\R,\ellp{2}(\prob, \h_2)\bigl)$,
\begin{align*}
 \expect\norm{ (\Gamma X)(t) -  (\Gamma Y)(t)}_{\h_2}^2  \leq &2 \expect\CCO{\int_{-\infty}^{t}e^{-\delta(t-s)}\|F(s, X(s))- F(s,Y(s))\|_{\h_2}ds}^2 \\
  & + 2
\expect\norm{\int_{-\infty}^{t}T(t-s)[G(s, X(s)) - G(s,Y(s))]dW(s)}_{\h_2}^2\\
  = &  I_1(t) + I_2(t).
\end{align*}
Let us estimate $I_1(t)$. Using $\rm(H3)$, Cauchy-Schwartz inequality, and Lemma~\ref{lem:int-Step-vs-Bohr}, we obtain:
\begin{align*}
I_1(t)  & \leq 2\CCO{\int_{-\infty}^{t}e^{-\delta(t-s)}ds}\CCO{\int_{-\infty}^{t}e^{-\delta(t-s)}\expect\|F(s, X(s))-F(s,Y(s))\|_{\h_2}^2ds}\\
&\leq \frac{2}{ \delta}\CCO{\int_{-\infty}^{t}e^{-\delta (t-s)}K^2(s)\expect\|X(s)- Y(s)\|_{\h_2}^2ds}\\
&\leq\frac{2}{ \delta}\CCO{\sup_{s\in\R} \expect\|X(s)- Y(s)\|_{\h_2}^2}\CCO{\int_{-\infty}^{t}e^{-\delta(t-s)}K^2(s)ds}\\
&\leq\dfrac{2\norm{K}^2_{\St^2}}{\delta(1-\exp(-\delta))}\sup_{s\in\R} \expect\|X(s))- Y(s))\|_{\h_2}^2.
\end{align*}
For $I_2(t)$, using again $\rm(H3)$, Lemma~\ref{lem:int-Step-vs-Bohr}, and Itô's isometry we get:
\begin{align*}
I_2(t) &\leq  2\trace Q
   \int_{-\infty}^{t}e^{-2\delta(t-s)}
         \expect\|G(s, X(s)) - G(s,Y(s))\|^{2}_{\espLH(\h_1, \h_2)}ds\\
&\leq  2\trace Q
    \int_{-\infty}^{t}e^{-2\delta(t-s)}
      K^{2}(s)\expect\|X(s) - Y(s)\|^{2}_{\h_2}ds\\
&\leq 2 \trace Q\CCO{\sup_{s\in\R} \expect\|X(s)- Y(s)\|_{\h_2}^2}\CCO{\int_{-\infty}^{t}e^{-2\delta(t-s)}K^2(s)ds}\\
&\leq\dfrac{2\norm{K}^2_{\St^2} \trace Q}{1-\exp(-2\delta)}\CCO{\sup_{s\in\R} \expect\|X(s)- Y(s)\|_{\h_2}^2}.
\end{align*}
We thus have
$$\|\Gamma X -  \Gamma Y\|_{\infty}^2 \leq \CCO{\dfrac{2\norm{K}^2_{\St^2}}{\delta(1-\exp(-\delta))}+\dfrac{2\norm{K}^2_{\St^2} \trace Q}{1-\exp(-2\delta)}} \|X - Y\|_{\infty}^2=\theta _{\St}\|X - Y\|_{\infty}^2.$$
Consequently, as $\theta _{\St}< 1$, we deduce that $\Gamma$ is a contraction operator, hence there exists a unique mild
solution to \eqref{eq:SDE} in  $\CUB\bigl(\R,\ellp{2}(\prob, \h_2)\bigl)$. By \cite[Theorem 7.2]{dapratozabczyk14book}), almost all trajectories of this solution are continuous.

\medskip
\noindent{\em \textbf{Step~2: almost periodicity in one-dimensional distribution of the solution.}}
 We use Bochner's double sequences criterion in Stepanov sense.
Since $F \in \StAP{2}(\R\times \h_2, \h_2) $ and
$G \in \StAP{2}(\R\times \h_2, \espLH(\h_1, \h_2))$, we deduce, by Proposition~\ref{prop:stepanov-ponctuel}, that there exist
subsequences  $(\alpha_n^{'}) \subset (\alpha^{''}_n)$ and $(\beta_n^{'}) \subset (\beta^{''}_n)$ with same indexes (and independent of $x$), and
functions $F^{\infty}\in \StAP{2}(\R\times \h_2, \h_2)$ and  $G^{\infty} \in \StAP{2}(\R\times \h_2, \espLH(\h_1, \h_2))$ such that for every $t\in \R$ and $x\in \h_2$
\begin{multline}\label{eq:SAPF}
 \lim_{n\rightarrow\infty}\int_t^{t+1}\norm{F(s+\alpha_n^{'}+\beta_n^{'},
 x)-F^{\infty}(s,x)}_{\h_2}^2ds \\
 =\lim_{n\rightarrow\infty}\lim_{m\rightarrow\infty}\int_t^{t+1}\norm{F(s+\alpha_n^{'}+\beta_m^{'},
 x)-F^{\infty}(s,x)}_{\h_2}^2ds =0,
\end{multline}
\begin{multline}\label{eq:SAPG}
 \lim_{n\rightarrow\infty}\int_t^{t+1}\norm{G(s+\alpha_n^{'}+\beta_n^{'},
 x)-G^{\infty}(s,x)}_{\espLH(\h_1,\h_2)}^2ds \\
= \lim_{n\rightarrow\infty}\lim_{m\rightarrow\infty}\int_t^{t+1}\norm{G(s+\alpha_n^{'}+\beta_n^{'},
 x)-G^{\infty}(s,x)}_{\espLH(\h_1,\h_2)}^2ds =0.
\end{multline}
These limits exist also uniformly with respect to $t \in \R$.

Thanks to Proposition~\ref{lem:limits_inversion}, we obtain the following interesting properties:
\begin{itemize}
 \item The functions $F^{\infty}$ and $G^{\infty}$ satisfy similar conditions as (H2) and (H3).
 \item There are subsequences of  $(\alpha_n)$ (resp. $(\beta_n))$, still noted (for simplicity) by $(\alpha_n)$ (resp. $(\beta_n))$ and
Lebesgue-negligible subset $\mnegl$ of $\R$ such that for all $s \in \R \setminus \mnegl$ and every $ x \in \h_2$
\begin{eqnarray}\label{eq:APF}
 \lim_{n\rightarrow\infty}F(s+\alpha_n+\beta_n,
 x) = \lim_{n\rightarrow\infty}\lim_{m\rightarrow\infty}F(s+\alpha_n+\beta_m,
 x) = F^{\infty}(s,x),\\
 \lim_{n\rightarrow\infty}G(s+\alpha_n+\beta_n,
 x) = \lim_{n\rightarrow\infty}\lim_{m\rightarrow\infty}G(s+\alpha_n+\beta_m,
 x) = G^{\infty}(s,x).\label{eq:APG}
\end{eqnarray}
\end{itemize}
We now set $\gamma_n = \alpha_n+\beta_n$ and
consider the sequence of operators, defined, for each $n\geq 1$, by
$$(\Gamma^nX)(t) = \int^t_{-\infty}T(t-s)F(s+\gamma_n, X(s))ds
     + \int^t_{-\infty}T(t-s)G(s+\gamma_n, X(s))dW(s).$$ Let $\Gamma^\infty$ be the operator defined by $$(\Gamma^\infty X)(t) = \int^t_{-\infty}T(t-s)F^\infty(s, X(s))ds
+ \int^t_{-\infty}T(t-s)G^\infty(s, X(s))dW(s).$$
Using the same reasoning as in the first step, we deduce that, for each $n\geq 1$, $\Gamma^n$ maps $\St^2\bigl(\R,\ellp{2}(\prob,
\h_2)\bigl)$ into $\CUB\bigl(\R,\ellp{2}(\prob, \h_2)\bigl)$ and it is a contraction operator, with  contraction constant equal to $\theta _{\St}$.
It follows, by an application of the fixed point theorem,  that there exists a process
$$X^n(t) = \int^t_{-\infty}T(t-s)F(s+\gamma_n, X^n(s))ds
     + \int^t_{-\infty}T(t-s)G(s+\gamma_n, X^n(s))dW(s)$$
which is the fixed point of $\Gamma^n$ and also the mild solution to
\begin{equation*}
 dX(t) = A X(t)dt + F(t+\gamma_n, X(t))dt + G(t+\gamma_n, X(t))dW(t).
\end{equation*}
Moreover, thanks to Proposition~\ref{lem:limits_inversion}, the mappings $F^\infty$ and $G^\infty$ satisfy similar conditions as (H2) and (H3). Hence, the fixed point theorem applied on  $\Gamma^\infty$ ensures the existence of a process $X^\infty$, satisfying the integral equation $$X^\infty(t) = \int^t_{-\infty}T(t-s)F^\infty(s, X^\infty(s))ds + \int^t_{-\infty}T(t-s)G^\infty(s, X^\infty(s))dW(s),$$
that is, $X^\infty$ is a mild solution to
\begin{equation*}
 dX(t) = A X(t)dt + F^\infty(t, X(t))dt + G^\infty(t, X(t))dW(t).
\end{equation*}

Make the change of variable $\sigma + \gamma_n = s$, the process
\begin{equation*}
X(t+ \gamma_n)
= \int^{t+\gamma_n}_{-\infty}T(t+\gamma_n-\sigma)F(\sigma, X(\sigma))d\sigma
     + \int^{t+\gamma_n}_{-\infty}T(t+\gamma_n-\sigma)G(\sigma, X(\sigma))dW(\sigma)
\end{equation*}
becomes
\begin{equation*}
X(t+ \gamma_n)= \int^t_{-\infty}T(t-s)F(s+\gamma_n, X(s+\gamma_n))ds
     + \int^t_{-\infty}T(t-s)G(s+\gamma_n,X(s+\gamma_n))d\tilde{W}_n(s),
\end{equation*}
where  $\tilde{W}_n(s) = W(s+\gamma_n) - W(\gamma_n)$ is a Brownian motion with the same distribution as $W(s)$.
We deduce that the process $X(t+\gamma_n)$ has the same
distribution as $X^n(t)$.

Let us show that $X^n(t)$ converges in quadratic mean to $X^\infty(t)$ for each
fixed $t\in \R$.  We have, by the triangular inequality,
\begin{align*}
 \expect\lVert X^n(t) - X^\infty(t)\rVert^2 =& \expect\lVert \int^t_{-\infty}T(t-s)[F(s+\gamma_n, X^n(s)) -F^\infty(s, X^\infty(s))]ds\\
&+\int^t_{-\infty}T(t-s)[G(s+\gamma_n, X^n(s)) - G^\infty(s, X^\infty(s))]dW(s)\rVert^2\\
\leq&4 \expect\lVert \int^t_{-\infty}T(t-s)[F(s+\gamma_n, X^n(s)) -F(s+\gamma_n, X^\infty(s))]ds\rVert^2\\
&+4 \expect\lVert\int^t_{-\infty}T(t-s)[G(s+\gamma_n, X^n(s)) - G(s+\gamma_n, X^\infty(s))]dW(s)\rVert^2\\
&+4\expect\lVert \int^t_{-\infty}T(t-s)[F(s+\gamma_n, X^\infty(s)) -F^\infty(s, X^\infty(s))]ds\rVert^2\\
&+4\expect\lVert\int^t_{-\infty}T(t-s)[G(s+\gamma_n, X^\infty(s)) - G^\infty(s, X^\infty(s))]dW(s)\rVert^2 \\
\leq& I_1^n(t) + I_2^n(t) + I_3^n(t)+ I_4^n(t).
\end{align*}

Now, using (H1), (H3), Hölder's inequality, and Lemma~\ref{lem:int-Step-vs-Bohr}, we obtain
\begin{align*}
I_1^n(t)&= 4\expect\lVert \int^t_{-\infty}T(t-s)
      [F(s+\gamma_n, X^n(s)) -F(s+\gamma_n, X^\infty(s))]ds\rVert^2\\
&\leq4\expect\CCO{\int^t_{-\infty}
  e^{-\delta(t-s)} \lVert F(s+\gamma_n, X^n(s)) -F(s+\gamma_n, X^\infty(s))
  \rVert ds}^2\\
&\leq\frac{4}{\delta}\int^t_{-\infty}e^{-\delta(t-s)}K^2(s+\gamma_n)\expect\lVert  X^n(s)
               - X^\infty(s)\rVert^2 ds\\
&\leq \frac{4}{\delta}\Bigl(\int^t_{-\infty}e^{-\frac{p\delta}{4}(t-s)}K^{p}(s+\gamma_n)ds\Bigr)^{\frac{2}{p}}
\Bigl(\int^t_{-\infty}e^{-\frac{q\delta}{4}(t-s)}
\bigl(\expect\lVert  X^n(s) - X^\infty(s)\rVert^2\bigr)^{\frac{q}{2}} ds\Bigr)^{\frac{2}{q}}\\
&\leq \frac{4}{\delta}\Biggl(\frac{\norm{K}_{\St^{p}}^{p}}{1-e^{-\frac{p\delta}{2}}}\Biggr)^{\frac{2}{p}}
\Biggl(\int^t_{-\infty}e^{-\frac{q\delta}{2}(t-s)}
\bigl(\expect\lVert  X^n(s) - X^\infty(s)\rVert^2\bigr)^{\frac{q}{2}} ds\Biggr)^{\frac{2}{q}}.
\end{align*}
For $I_2^n(t)$, using Itô's isometry,  Hölder's inequality, and Lemma~\ref{lem:int-Step-vs-Bohr}, we get
\begin{align*}
 I_2^n(t)& = 4\expect\lVert\int^t_{-\infty}T(t-s)
    [G(s+\gamma_n, X^n(s)) -G(s+\gamma_n, X^\infty(s))]dW(s)\rVert^2\\
& \leq 4\trace Q\expect\int^t_{-\infty}\lVert T(t-s)\rVert^2
      \lVert G(s+\gamma_n, X^n(s)) -G(s+\gamma_n, X^\infty(s))\rVert^2 ds\\
&\leq 4\trace Q\int^t_{-\infty}e^{-2\delta(t-s)}K^2(s+\gamma_n)
      \expect\lVert X^n(s) - X^\infty(s)\rVert^2ds\\
  &\leq 4\trace Q \CCO{\int^t_{-\infty}e^{-\frac{p\delta}{2}(t-s)}K^{p}(s+\gamma_n)ds}^{\frac{2}{p}}
\CCO{\int^t_{-\infty}e^{-\frac{q\delta }{2}(t-s)}
\bigl(\expect\lVert  X^n(s) - X^\infty(s)\rVert^2\bigr)^{\frac{q}{2}} ds}^{\frac{2}{q}}\\
&\leq 4\trace Q\Biggl(\frac{\norm{K}_{\St^{p}}^{p}}{1-e^{-\frac{p\delta}{2}}}\Biggr)^{\frac{2}{p}}
\Biggl(\int^t_{-\infty}e^{-\frac{q\delta}{2}(t-s)}
\bigl(\expect\lVert  X^n(s) - X^\infty(s)\rVert^2\bigr)^{\frac{q}{2}} ds\Biggr)^{\frac{2}{q}}.
\end{align*}
Let us show that $I_3^n(t)$ and $I_4^n(t)$ go to $0$ as $n$ goes to infinity.

For any $r\in\R$, since $X^\infty\in\CUB\Bigl(\R,\ellp{2}(\prob, \h_2)\Bigr)$,
the family
$$\Bigl(\norm{X^\infty(s)}^2\Bigr)_{r\leq s\leq r+1}$$
is uniformly integrable, by the converse to Vitali's theorem.
By the growth condition satisfied by $F$ and $F^\infty$, this shows that the
family
$$(U_{s,n}):=\Bigl(
         \lVert F(s+\gamma_n, X^\infty(s)) -F^\infty(s, X^\infty(s))\rVert^2
                                                  \Bigr)_{r\leq s\leq
                                                    r+1,\, n\geq 1}$$
is uniformly integrable.
By La Vall\'ee Poussin's criterion, there
exists a non-negative increasing convex function $\Phi :
\R\rightarrow\R$
such that
$\lim_{t\rightarrow\infty}\frac{\Phi(t)}{t}=+\infty$ and
$\sup_{s,n}\expect(\Phi(U_{s,n}))<+\infty$.
We thus have
$$\sup_{n}\expect \int^{r+1}_{r}
     \Phi\bigl(U_{s,n}\bigr)\,ds <+\infty,$$
which prove that the family
$(U_{.,n})_{n\geq 1}$
is uniformly integrable with respect to the probability measure
$\prob\otimes \lambda$ on
$\esprob\times[r,r+1]$, where $\lambda$ denotes Lebesgue's measure.
This proves that, for any $r\in\R$,
\begin{equation}\label{eq:UIconv}
\lim_{n\rightarrow+\infty}\Bigg\lgroup\expect\CCO{ \int^{r+1}_{r}
     \lVert F(s+\gamma_n, X^\infty(s))- F^\infty(s, X^\infty(s))\rVert^2
      ds}      \Bigg\rgroup^{1/2} =0.
\end{equation}

Let $t\geq 0$. Since $X^\infty\in\CUB\bigl(\R,\ellp{2}(\prob,
\h_2)\bigl)$, and thanks to the growth condition satisfied by $F$, the
sequence
$$\Bigl(\expect\int^{t-k+1}_{t-k}\lVert
  F(s+\gamma_n, X^\infty(s))- F^\infty(s, X^\infty(s))\rVert^{2}
  ds\Bigr)_{k\geq 1, n\geq 0}$$
is bounded. We can thus find an integer $N(t,\eta)$ such that, for any
$n\geq 0$,
\begin{equation}\label{eq:Nteta-}
\bigg\lgroup\sum_{k>N(t,\eta)}e^{-\delta(k-1)}\expect \int^{t-k+1}_{t-k}\lVert
  F(s+\gamma_n, X^\infty(s))- F^\infty(s, X^\infty(s))\rVert^{2} ds
\bigg\rgroup^{1/2}
\leq\eta.
\end{equation}
Using \eqref{eq:Nteta-}, we get
\begin{align}
  \sqrt{ I^n_3 (t)}
\leq&2\bigg\lgroup\expect\CCO{\int^t_{-\infty}\lVert T(t-s)\rVert
     \lVert F(s+\gamma_n, X^\infty(s))- F^\infty(s, X^\infty(s))\rVert
      ds}^2
     \bigg\rgroup^{1/2} \notag\\
\leq&2\sum^{N(t,\eta)}_{k=1}
     \Bigg\lgroup\expect\CCO{ \int^{t-k+1}_{t-k}e^{-\delta(t-s)}
     \lVert F(s+\gamma_n, X^\infty(s))- F^\infty(s, X^\infty(s))\rVert
      ds}^2      \Bigg\rgroup^{1/2} \notag\\
&+2\sum_{N(t,\eta)+1}^{\infty}\Bigg\lgroup\expect\CCO{ \int^{t-k+1}_{t-k}e^{-\delta(t-s)}
     \lVert F(s+\gamma_n, X^\infty(s))- F^\infty(s, X^\infty(s))\rVert
      ds}^2      \Bigg\rgroup^{1/2} \notag\\
\leq&2\sum^{N(t,\eta)}_{k=1} e^{-\delta(k-1)}
     \Bigg\lgroup\expect\CCO{ \int^{t-k+1}_{t-k}
     \lVert F(s+\gamma_n, X^\infty(s))- F^\infty(s, X^\infty(s))\rVert
      ds}^2      \Bigg\rgroup^{1/2} +2\eta.\label{eq:In}
\end{align}
Since the sum in \eqref{eq:In} is finite and $\eta$ is arbitrary,
we deduce from \eqref{eq:UIconv} that
$$\lim_{n\rightarrow+\infty}I^n_3 (t)=0.$$

For $I_4^n(t)$, applying Itô's isometry, we obtain
 \begin{align*}
 I_4^n(t)& = 4\expect\lVert\int^t_{-\infty}T(t-s)
     [G(s+\gamma_n, X^\infty(s))-G^\infty(s, X^\infty(s))]dW(s)\rVert^2\\
&\leq 4\trace Q\expect\CCO{\int^t_{-\infty}e^{-2\delta(t-s)}
      \lVert G(s+\gamma_n, X^\infty(s))-G^\infty(s, X^\infty(s))\rVert^2ds}.
\end{align*}
For the same reason as for $I_3^n(t)$ and by \eqref{eq:APG}, $I_4^n(t)$ goes to $0$ as $n\rightarrow \infty$.
Now, let us define the following quantities:
\begin{align*}
\alpha_n(t):= I_3^n(t) + I_4^n(t),\,\,
\beta_1:=  \frac{4}{\delta}\CCO{\frac{\norm{K}_{\St^{p}}^{p}}{1-e^{-\frac{p\delta}{4}}}}^{\frac{2}{p}},\,\,
\beta_2:= 4\trace Q\CCO{\frac{\norm{K}_{\St^{p}}^{p}}{1-e^{-\frac{p\delta}{2}}}}^{\frac{2}{p}}.
\end{align*}
From the above, we have
\begin{multline*}
\expect \lVert X^n(t) - X^\infty(t)\rVert^2 \leq \alpha_n(t) + \beta_1\Biggl(\int^t_{-\infty}e^{-\frac{q\delta}{4}(t-s)}
\bigl(\expect\lVert  X^n(s) - X^\infty(s)\rVert^2\bigr)^{\frac{q}{2}} ds\Biggr)^{\frac{2}{q}}\\
\begin{aligned}
 +
\beta_2\Biggl(\int^t_{-\infty}e^{-\frac{q\delta}{2}(t-s)}
\bigl(\expect\lVert  X^n(s) - X^\infty(s)\rVert^2\bigr)^{\frac{q}{2}} ds\Biggr)^{\frac{2}{q}}.
\end{aligned}
\end{multline*}
By convexity of the mapping $u\mapsto u^{\frac{q}{2}}$ defined on $\R^+$, we get
\begin{multline*}
 \bigl(\expect\lVert X^n(t) - X^\infty(t)\rVert^2\bigr)^{\frac{q}{2}}
 \leq \frac{1}{3}\bigl(3\alpha_n(t)\bigr)^{\frac{q}{2}} + \frac{1}{3}\bigl(3\beta_1\bigr)^{\frac{q}{2}}
 \Biggl(\int^t_{-\infty}e^{-\frac{q\delta}{4}(t-s)}
\bigl(\expect\lVert  X^n(s) - X^\infty(s)\rVert^2\bigr)^{\frac{q}{2}} ds\Biggr)\\
\begin{aligned}
+
\frac{1}{3}\bigl(3\beta_2\bigr)^{\frac{q}{2}}\Biggl(\int^t_{-\infty}e^{-\frac{q\delta}{2}(t-s)}
\bigl(\expect\lVert  X^n(s) - X^\infty(s)\rVert^2\bigr)^{\frac{q}{2}} ds\Biggr).
\end{aligned}
\end{multline*}
Since $\theta'_{\St}= \frac{4}{3q\delta}\Bigl(\bigl(3\beta_1\bigr)^{\frac{q}{2}}+\bigl(3\beta_2\bigr)^{\frac{q}{2}}\Bigr)<1$,
we obtain, by Gronwall's Lemma as in \cite[Lemma~3.3]{KMRF12averaging},
$$\CCO{\expect\lVert X^n(t) - X^\infty(t)\rVert^2}^{\frac{q}{2}}\leq \frac{1}{3}\CCO{3\alpha_n(t)}^{\frac{q}{2}} +
\frac{1}{3}\int_{-\infty}^te^{-\gamma(t-s)}\bigl(3\alpha_n(s)\bigr)^{\frac{q}{2}}ds.$$
Using the dominated convergence theorem and the convergence of $\alpha_n(t)$ to $0$ as $n\rightarrow \infty$, we obtain the convergence
of $X^n(t)$ to $X^\infty(t)$  in quadratic mean. Hence  $X^n(t)$ converges in distribution to $X^\infty(t)$. But, since the
 distribution of  $X^n(t)$ is the same as that of $X(t+\gamma_n)$, we deduce that, for every $t\in \R$
$$\lim_{n\rightarrow\infty}\law{X(t+\alpha_n+\beta_n )} = \law{X^\infty(t)}.$$
Using the same reasoning as above and taking into account \eqref{eq:APF} and \eqref{eq:APG}, we can easily deduce that
$$\lim_{m\rightarrow\infty}\lim_{n\rightarrow\infty}\law{X(t+\alpha_m+\beta_n )} = \law{X^\infty(t)}.$$
Thus the solution $X$ is almost periodic in one-dimensional distribution.

To prove almost periodicity in $2$-UI distribution of the solution to \eqref{eq:SDE}, we need a generalization of \cite[Proposition~3.1]{DaPrato-Tudor95} to the Stepanov context. This allows us to obtain the convergence of the solutions by assuming only the convergence in mean (in Stepanov sense) of the coefficients. Let us mention that a similar result is obtained by  Ivo Vrko{\v{c}} \cite{Vrkoc_95}, but in another context.

\begin{prop}{\label{prop:conv en dist}}
Let $\tau\in\R$.
Let $(\xi_n)_n \subset \ellp{2}(\prob,
\h_2)$ be a sequence of random variables.
Let $F_n:\R\times\h_2\rightarrow \h_2$ and $G_n:\R\times\h_2\rightarrow \espLH(\h_1, \h_2)$, $n\in \N$, be two $\St^{2}$-bounded sequences. Assume that for each $n\in\N$,  the mappings $F_n$ and $G_n$ satisfy
${\rm(H2)}-{\rm(H3)}$, such that the constant $M$  is
independent of $n$ and the set of mappings $\{K_{n}, n\in\N\}$ is $\St^{2}$-bounded, that is, $\sup_{n\in \N}\norm{K_n}_{\St^2}<+\infty$.
Let $X_n$ be the unique mild solution to
\begin{equation*}
X_n(t)=T(t-\tau)\xi_n
+\int^{t}_{\tau}T(t-s)F_n\bigl(s, X_n(s)\bigl)ds +
\int^{t}_{\tau}T(t-s)G_n\bigl(s, X_n(s)\bigl)dW(s),\, t\geq\tau
\end{equation*}
in the space $\CUB\bigl(\R,\ellp{2}(\prob,
\h_2)\bigl)$.
Assume that for each $x$ in $\h_{2}$, the sequences $(F_n(.,x))_n$ and $(G_n(.,x))_n$ converge in Stepanov sense to $F_{\infty}(.,x)$ and $G_{\infty}(.,x)$ respectively, that is,
\begin{gather*}
\lim_{n\rightarrow\infty}\lVert F_n(.,x)-F_{\infty}(.,x)\rVert_{\St^{2}}=0,\
\lim_{n\rightarrow\infty}\lVert G_n(.,x)-G_{\infty}(.,x)\rVert_{\St^{2}}=0,
\end{gather*}
for each $x$ in $\h_{2}$.
It follows that
\begin{itemize}
  \item [a)] There exists a unique mild solution $X_\infty$ to
  \begin{equation}\label{eq:Xinfty}
X_\infty(t)=T(t-\tau)\xi_\infty
+\int^{t}_{\tau}T(t-s)F_\infty\bigl(s, X_\infty(s)\bigl)ds +
\int^{t}_{\tau}T(t-s)G_\infty\bigl(s, X_\infty(s)\bigl)dW(s),\, t\geq\tau.
  \end{equation}
  \item [b)]If\ $\lim_{n\rightarrow\infty}\expect\lVert \xi_n -
    \xi_\infty\rVert^2=0$, then, for all $\sigma\geq \tau$,
  \begin{equation}\label{eq:conv_Prop_APD_Step}
  \lim_{n\rightarrow\infty}\expect \CCO{\sup_{\tau\leq t\leq
      \sigma}\lVert X_n(t) - X_\infty(t)\rVert^2}=0.
  \end{equation}
  \item [c)] If\begin{gather*}
\lim_{n\rightarrow\infty}d_{\bl}(\law{\xi_n},\law{\xi_\infty})=0,
\end{gather*} then we have in $\Cont([\tau,\sigma]; \h_2)$,
for all $\sigma\geq\tau$,
\begin{equation}\label{eq:lim_dBL}
\lim_{n\rightarrow\infty}d_{\bl}(\law{X_n},\law{X_\infty})=0.
\end{equation}
\end{itemize}
\end{prop}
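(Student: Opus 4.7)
For part (a), I first show that the limit coefficients $F_\infty$ and $G_\infty$ inherit conditions analogous to $\rm(H2)$ and $\rm(H3)$. The growth bound $M(1+\|x\|_{\h_2})$ passes to the limit in $\St^2$-mean, and Proposition~\ref{lem:limits_inversion} (applied exactly as in Step~2 of the proof of Theorem~\ref{theo:main}) provides a measurable function $K_\infty\in\St^2(\R)$ with $\|K_\infty\|_{\St^2}\leq \sup_n\|K_n\|_{\St^2}<\infty$ such that both $F_\infty$ and $G_\infty$ are $K_\infty$-Lipschitz in the second variable, almost everywhere in $t$. Existence and uniqueness of the mild solution $X_\infty$ to \eqref{eq:Xinfty} then follow from a standard Picard fixed point argument on $\CUB([\tau,\sigma],\ellp{2}(\prob,\h_2))$ for every $\sigma>\tau$, exactly as in Step~1 of the proof of Theorem~\ref{theo:main}; the pieces are patched together on $[\tau,+\infty)$ by uniqueness.

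For part (b), the plan is to decompose, for $t\in[\tau,\sigma]$,
\begin{align*}
X_n(t)-X_\infty(t)=\ & T(t-\tau)(\xi_n-\xi_\infty)\\
&+\int_\tau^t T(t-s)\bigl[F_n(s,X_n(s))-F_n(s,X_\infty(s))\bigr]\,ds\\
&+\int_\tau^t T(t-s)\bigl[F_n(s,X_\infty(s))-F_\infty(s,X_\infty(s))\bigr]\,ds\\
&+\int_\tau^t T(t-s)\bigl[G_n(s,X_n(s))-G_n(s,X_\infty(s))\bigr]\,dW(s)\\
&+\int_\tau^t T(t-s)\bigl[G_n(s,X_\infty(s))-G_\infty(s,X_\infty(s))\bigr]\,dW(s),
\end{align*}
and to control the last four terms via H\"older's inequality, It\^o's isometry, the Burkholder--Davis--Gundy inequality, and Lemma~\ref{lem:int-Step-vs-Bohr}. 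The Lipschitz pieces generate a Gronwall-type self-term involving $\expect\|X_n(s)-X_\infty(s)\|^2$, while the initial-condition term vanishes by hypothesis. The decisive step is to show that
\begin{equation*}
\lim_{n\to\infty}\expect\int_\tau^\sigma \|F_n(s,X_\infty(s))-F_\infty(s,X_\infty(s))\|_{\h_2}^2\,ds=0,
\end{equation*}
together with the analogous limit for $G_n$. Since the Stepanov convergence $\|F_n(\cdot,x)-F_\infty(\cdot,x)\|_{\St^2}\to 0$ is only known for each fixed $x$, I would evaluate this compositional limit along the continuous trajectory $X_\infty$ by approximating $X_\infty$ on $[\tau,\sigma]\times\esprob$ by a finite-valued process, using separability of $\h_2$, continuity of sample paths, and the uniform integrability of $\|X_\infty(s)\|^2$ afforded by $\rm(H2)$. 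The discretisation error is absorbed by the equi-Lipschitz bound $\sup_n\|K_n\|_{\St^2}<\infty$, and the remaining piece vanishes by Stepanov convergence at each of the finitely many sample values. Gronwall's lemma, combined with Doob's inequality and the factorisation formula for the stochastic convolution, then upgrades the pointwise-in-$t$ convergence into the uniform bound \eqref{eq:conv_Prop_APD_Step}.

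For part (c), I would apply Skorohod's representation theorem to produce, on an auxiliary probability space $(\widetilde{\esprob},\widetilde{\tribu},\widetilde{\prob})$, random variables $\tilde\xi_n\stackrel{d}{=}\xi_n$ with $\tilde\xi_n\to\tilde\xi_\infty$ almost surely. After enlarging this space to support a $Q$-Wiener process $\widetilde W$ independent of $(\tilde\xi_n)_n$, denote by $\widetilde X_n^\xi$ the mild solution of the $n$-th equation driven by $\widetilde W$ with initial value $\xi$. Pathwise uniqueness gives $\widetilde X_n^{\tilde\xi_n}\stackrel{d}{=}X_n$ and $\widetilde X_\infty^{\tilde\xi_\infty}\stackrel{d}{=}X_\infty$. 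Conditioning on the initial data, a standard Gronwall estimate yields
\begin{equation*}
\expect\Bigl[\sup_{t\in[\tau,\sigma]}\|\widetilde X_n^{\tilde\xi_n}(t)-\widetilde X_n^{\tilde\xi_\infty}(t)\|^2\,\Big|\,\tilde\xi_n,\tilde\xi_\infty\Bigr]\leq C\|\tilde\xi_n-\tilde\xi_\infty\|^2
\end{equation*}
with $C$ independent of $n$, whence the difference converges to $0$ in $\widetilde{\prob}$-probability after truncating and applying dominated convergence. Part (b) applied on $(\widetilde{\esprob},\widetilde{\tribu},\widetilde{\prob})$ to the fixed initial datum $\tilde\xi_\infty$ gives $\widetilde X_n^{\tilde\xi_\infty}\to\widetilde X_\infty^{\tilde\xi_\infty}$ in $\ellp{2}$ uniformly on $[\tau,\sigma]$. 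Combining these produces convergence in probability, and hence in distribution, of $\widetilde X_n^{\tilde\xi_n}$ to $\widetilde X_\infty^{\tilde\xi_\infty}$ in $\Cont([\tau,\sigma],\h_2)$, which is the content of \eqref{eq:lim_dBL}. The principal obstacle throughout is the compositional Stepanov limit at the random trajectory $X_\infty$ in part (b); once this is controlled, the remaining arguments are routine stochastic analysis.
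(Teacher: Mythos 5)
Your proposal is correct in outline, but for parts b) and c) it takes a genuinely different route from the paper. For a) both arguments coincide: Proposition~\ref{lem:limits_inversion} transfers $\rm(H2)$--$\rm(H3)$ to $F_\infty$ and $G_\infty$ and the fixed point argument of Step~1 of Theorem~\ref{theo:main} applies. For b) and c), the paper does not redo any stochastic estimates at all: it uses the subsequence principle together with Proposition~\ref{lem:limits_inversion} (Koml\'os) to extract, from any subsequence, a further subsequence along which $F_n(t,x)\rightarrow F_\infty(t,x)$ and $G_n(t,x)\rightarrow G_\infty(t,x)$ for every $x$ and almost every $t$ (after passing to versions of $F_\infty,G_\infty$ that leave the integrals in \eqref{eq:Xinfty} unchanged), and then invokes \cite[Proposition~3.1]{DaPrato-Tudor95}, which is exactly the statement of b) and c) under pointwise convergence of the coefficients. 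Your argument is instead self-contained: a direct Gronwall scheme for b), with the compositional limit $\expect\int_\tau^\sigma\norm{F_n(s,X_\infty(s))-F_\infty(s,X_\infty(s))}^2ds\rightarrow 0$ handled by tightness of $\law{X_\infty}$ in $\Cont([\tau,\sigma],\h_2)$, a finite $\epsilon$-net, the equi-Lipschitz bound $\sup_n\norm{K_n}_{\St^2}<\infty$ and uniform integrability of $(\norm{X_\infty(s)}^2)_s$; and Skorohod representation plus continuous dependence on the initial datum for c). This buys independence from the external reference (and in fact reproves the relevant part of it), at the cost of length; the paper's route buys brevity at the cost of leaning on Koml\'os and on \cite{DaPrato-Tudor95}. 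Two points in your write-up deserve tightening. First, to put the supremum inside the expectation in \eqref{eq:conv_Prop_APD_Step} you should invoke the maximal inequality for stochastic convolutions with contraction semigroups (available under $\rm(H1)$, e.g.\ via unitary dilation as in \cite{dapratozabczyk14book}), rather than ``Doob's inequality and the factorisation formula'': the convolution is not a martingale, and the factorisation method at the level of second moments does not give $\expect\sup\norm{\cdot}^2$ directly. Second, in c) the identity in law $\widetilde X_n^{\tilde\xi_n}\stackrel{d}{=}X_n$ requires a word on uniqueness in law (measurability of the solution as a functional of $(\xi_n,W)$ from the Picard iteration, or Yamada--Watanabe), and on the $\tribu_\tau$-measurability of $\xi_n$; both are implicit in the notion of mild solution used here but should be stated.
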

\begin{proof}
a) By Proposition \ref{lem:limits_inversion}, $F_\infty$ and
$G_\infty$ satisfy Conditions ${\rm(H2)}$, ${\rm(H3)}$, and
${\rm(H4)}$. We deduce a) as in the first step of the proof of Theorem
\ref{theo:main}.

\medskip\noindent
b) For any subsequence $(X'_n)$ of $(X_n)$, we can find, by
Proposition \ref{lem:limits_inversion}, a subsequence $(X''_n)$ of
$(X'_n)$ and versions $F''_\infty$ and $G''_\infty$ of $F_\infty$ and
$G_\infty$ respectively (i.e., $F''_\infty(t,.)=F_\infty(t,.)$ and
$G''_\infty(t,.)=G_\infty(t,.)$ for almost every $t$), such that the
corresponding subsequences
$(F''_n)$ and $(G''_n)$ converge pointwise to $F''_\infty$ and
$G''_\infty$ respectively.
Since the integrals in \eqref{eq:Xinfty} remain unchanged if we
replace $F_\infty$ by  $F''_\infty$ and $G_\infty$ by $G''_\infty$,
we deduce by \cite[Proposition~3.1]{DaPrato-Tudor95} that
\begin{equation}\label{eq:conv_Prop_APD_Step-subs}
  \lim_{n\rightarrow\infty}\expect \CCO{\sup_{\tau\leq t\leq
      \sigma}\lVert X''_n(t) - X_\infty(t)\rVert^2}=0.
  \end{equation}
Thus,
for any subsequence $(X'_n)$ of $(X_n)$ we can find a subsequence $(X''_n)$ of
$(X'_n)$ such that \eqref{eq:conv_Prop_APD_Step-subs} holds, which
proves \eqref{eq:conv_Prop_APD_Step}.

\medskip\noindent
c) Similarly, using \cite[Proposition~3.1]{DaPrato-Tudor95}, we obtain
that, for any subsequence $(X'_n)$ of $(X_n)$ we can find a subsequence $(X''_n)$ of
$(X'_n)$ such that
\begin{equation*}
\lim_{n\rightarrow\infty}d_{\bl}(\law{X''_n},\law{X_\infty})=0,
\end{equation*}
thus \eqref{eq:lim_dBL} holds.
\end{proof}
\finpr

\medskip
\proofof{Theorem \ref{theo:main} (continued)}

\medskip
\noindent{\textbf{Step~3: almost periodicity in $2$-UI distribution of the solution.}} To prove that $X$ is almost periodic in distribution, we use the same arguments as in \cite{KMRF12averaging} (see also \cite{bed-chal-mel-prf-sma2015}), using Proposition~\ref{prop:conv en dist}. For the uniform integrability part, we proceed as in \cite[page 1144]{bed-chal-mel-prf-sma2015}. We omit the details. 
 We can get more. From \eqref{eq:conv_Prop_APD_Step}, the sequence $\CCO{\norm{\tilde X(t+\gamma_n)}^2_{C_k}}$ is uniformly integrable. This means that $X \in \WASS^p_{[a,b]}\!\APD(\R,\h_2)$.
\finpr
\subsection{$\mu$-Pseudo almost periodicity of the solution in $2$-UI distribution}
Let $\mu$ be a Borel measure on $\R$ satisfying \eqref{eq:mu}
and Condition (\textbf{H}). Let us start  with a useful Lemma:
\begin{lemma}\label{lem:convolut}
Let $h\in\StER{q}(\R,\R,\mu)$, and let $K(.)$ be an $\St^{p}$-bounded function from $\R$ to $\R^+$. The function
$$t\mapsto\biggl(\int_{-\infty}^t e^{-2\delta(t-s)}K^2(s)h^2(s)\,ds\biggr)^{1/2}$$
is in $\ER(\R,\R,\mu)$.
\end{lemma}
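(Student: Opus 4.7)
The plan is to dominate the function in question pointwise by a discrete convolution of a known $\mu$-ergodic function with a summable exponential kernel, and then to conclude by translation invariance of $\ER(\R,\R,\mu)$. First I split the integral over $(-\infty,t]$ into the unit intervals $[t-n,t-n+1]$, $n\geq 1$. On each such interval, $e^{-2\delta(t-s)}\leq e^{-2\delta(n-1)}$, and H\"older's inequality with conjugate exponents $p/2$ and $q/2$ (so $\frac{1}{p}+\frac{1}{q}=\frac{1}{2}$, the relation set up in Section~\ref{sec:SDE}) yields
\begin{equation*}
\int_{t-n}^{t-n+1}K^2(s)h^2(s)\,ds
\leq \norm{K}_{\St^p}^{2}\,\abs{h}_{\St^q(t-n)}^{2}.
\end{equation*}
Summing in $n$, taking square roots and using the elementary inequality $\sqrt{\sum_n a_n}\leq\sum_n \sqrt{a_n}$ for $a_n\geq 0$ gives the pointwise bound
\begin{equation*}
\CCO{\int_{-\infty}^t e^{-2\delta(t-s)}K^2(s)h^2(s)\,ds}^{1/2}
\leq \norm{K}_{\St^p}\sum_{n=1}^{\infty}e^{-\delta(n-1)}\abs{h}_{\St^q(t-n)}.
\end{equation*}

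Next, the hypothesis $h\in\StER{q}(\R,\R,\mu)$ is, by the very definition of $\StER{q}$, the assertion that $t\mapsto\abs{h}_{\St^q(t)}$ lies in $\ER(\R,\R,\mu)$; in particular this function is bounded (since $\ER(\R,\R,\mu)\subset\BCont(\R,\R)$), say by a constant $M$. By Condition~(\textbf{H}), $\ER(\R,\R,\mu)$ is translation invariant, so each translate $t\mapsto\abs{h}_{\St^q(t-n)}$ is again in $\ER(\R,\R,\mu)$, and every partial sum $S_N(t):=\sum_{n=1}^{N}e^{-\delta(n-1)}\abs{h}_{\St^q(t-n)}$, being a finite linear combination of ergodic functions, belongs to $\ER(\R,\R,\mu)$. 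Since $\norm{S_\infty-S_N}_\infty\leq M\sum_{n>N}e^{-\delta(n-1)}\to 0$, the partial sums converge uniformly on $\R$ to $S_\infty$, and the closedness of $\ER(\R,\R,\mu)$ in $\BCont(\R,\R)$ under uniform convergence places $S_\infty$ in $\ER(\R,\R,\mu)$. Combined with the pointwise domination and the observation that a nonnegative $\BCont$ function dominated by an element of $\ER(\R,\R,\mu)$ is itself in $\ER(\R,\R,\mu)$, this proves the lemma; the required continuity and boundedness of $t\mapsto\bigl(\int_{-\infty}^t e^{-2\delta(t-s)}K^2(s)h^2(s)\,ds\bigr)^{1/2}$ follow from a routine dominated-convergence argument combined with the same H\"older splitting.

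The main obstacle is the transfer of $\mu$-ergodicity from the Stepanov seminorm function $\abs{h}_{\St^q(\cdot)}$ to the weighted convolution appearing in the statement. The H\"older reduction on unit intervals is routine; the substantive point is that Condition~(\textbf{H}) guarantees translation invariance of $\ER(\R,\R,\mu)$, which together with closedness under uniform limits is precisely what is needed to keep the exponentially weighted infinite sum of translates inside $\ER(\R,\R,\mu)$.
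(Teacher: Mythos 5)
Your proof is correct and follows essentially the same route as the paper's: the same decomposition of $(-\infty,t]$ into unit intervals with H\"older's inequality for the conjugate exponents $p/2$ and $q/2$, the same appeal to translation invariance of the ergodic class under Condition (\textbf{H}), and the same uniform-convergence treatment of the exponentially weighted series of translates of $t\mapsto\abs{h}_{\St^q(t)}$. The only difference is organizational --- you first establish a pointwise domination and then invoke closedness of $\ER(\R,\R,\mu)$ under uniform limits, whereas the paper estimates the $\mu$-average directly and passes to the limit term by term --- and you additionally make explicit the continuity and boundedness check that the paper leaves implicit.
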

\begin{proof}
Our proof uses the following result of \cite[Theorem 3.5]{blot-cieutat_ezzinbi2012}, that ensures that $\StER{q}(\R,\R,\mu)$ is  translation invariant. We have, for every $u\in\R$,
\begin{equation}\label{eq:invariance_h_ergodic}
\lim_{r\rightarrow+\infty}
 \frac{1}{\mu([-r,r])} \int_{[-r,r]}\biggl(\int_{0}^{1} \abs{h(t+u+s)}^{q}\ ds\biggr)^{\frac{1}{q}}\,d\mu(t)
=0.
\end{equation}
By Lebesgue's dominated convergence theorem and Hölder's inequality, we get
\begin{align*}
 &\frac{1}{\mu([-r,r])} \int_{[-r,r]} \biggl(\int_{-\infty}^t e^{-2\delta (t-s)}
           K^2(s) h^2(s)\, ds
        \biggr)^{1/2} d\mu(t)\\
&=  \frac{1}{\mu([-r,r])}
    \int_{[-r,r]}\biggl(\sum_{k=1}^{+\infty}\int_{0}^{1} e^{-2\delta (k-u)}
           K^2(t+u-k) h^2(t+u-k)\, du \biggr)^{1/2} d\mu(t)  \\
&\leq \lim_{n\rightarrow+\infty} \sum_{k=1}^{n} e^{-(k-1)p\delta}\norm{K}_{\St^{p}}^p\frac{1}{\mu([-r,r])}
   \int_{[-r,r]}\biggl(\int_{0}^{1} h^{q}(t+u-k)\, du \biggr)^{\frac{1}{q}} d\mu(t)  \\
&\leq \lim_{n\rightarrow+\infty} \sum_{k=1}^{n} e^{-(k-1)p\delta}\norm{K}_{\St^{p}}^p\frac{1}{\mu([-r,r])}
    \int_{[-r,r]}\biggl(\int_{0}^{1} h^{q}(t+u-k)\, du \biggr)^{\frac{1}{q}} d\mu(t).
\end{align*}
Since the
series
$$\sum_{k\geq 1} e^{-(k-1)p\delta}\norm{K}_{\St^{p}}^p\frac{1}{\mu([-r,r])}
    \int_{[-r,r]}\biggl(\int_{0}^{1} h^{q}(t+u-k)\, du \biggr)^{\frac{1}{q}} d\mu(t)$$
is uniformly convergent with respect to $r$, the claimed result is a consequence of \eqref{eq:invariance_h_ergodic}.
\end{proof}
\finpr
Before presenting the main result of this subsection, namely, the existence of $\mu$-pseudo almost periodic solution to Eq.~\eqref{eq:SDE}, let us introduce the following condition  $\rm(H4)'$ and some notations:
\begin{enumerate}
\item[$\rm(H4)'$] \label{cond2:Sppp of f and g}
$F\in \StPAP{2}(\R \times \h_2,\h_2,\mu)$ and $G \in \StPAP{2}( \R \times \h_2,\espLH(\h_1, \h_2),\mu) $.
\end{enumerate}
We denote by $(F_1,G_1)$ and $(F_2,G_2)$ the decompositions of $F$ and $G$ respectively, that is,
\begin{gather*}
F=F_1+F_2, \quad G=G_1+G_2, \\
F_1\in \StAP{2}(\R \times \h_2,\h_2),\
F_2\in \StER{2}(\R\times \h_2, \h_2,\mu),\\
G_1\in \StAP{2}( \R \times \h_2,\espLH(\h_1, \h_2)),\
G_2\in \StER{2}(\R\times \h_2, \espLH(\h_1, \h_2),\mu).
\end{gather*}
Let us now state the main result of the subsection.
\begin{theo}\label{theo:main3} Let $\mu$ be a Borel measure on $\R$ satisfying \eqref{eq:mu}
and Condition (\textbf{H}). Assume that $\rm{(H1)-(H3)}$ and $\rm{(H4)'}$  hold for both $F$ and $G$. Assume in addition that $F_1$ and $G_1$ satisfy the same growth and Lipschitz conditions $\rm{(H2)}$ and $\rm{(H3)}$  as $F$ and $G$
respectively, with same coefficient $M$ and mapping $K(.)$. Then:
\begin{enumerate}
  \item  Eq. \eqref{eq:SDE} admits a unique mild solution, $X$, in the space
$\CUB\bigl(\R,\ellp{2}(\prob, \h_2)\bigl)$ and $X$ has a.e.~continuous
trajectories, provided $\theta_{\St}<1$.
  \item Moreover, if $\theta'_{\St}<1$, then $X$ is $\mu$-pseudo almost periodic in $2$-UI distribution.
\end{enumerate}
\end{theo}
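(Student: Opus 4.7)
My plan mirrors the three-step structure of the proof of Theorem~\ref{theo:main}. For assertion~1, I would simply copy Step~1 of that proof verbatim: the contraction argument for $\Gamma$ on $\CUB(\R,\ellp{2}(\prob,\h_2))$ depends only on $\rm(H1)$--$\rm(H3)$ and on $\theta_{\St}<1$, so existence, uniqueness and a.e.~continuity of trajectories follow with no modification.

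For assertion~2, I would introduce the auxiliary ``almost periodic'' equation
\begin{equation*}
dX_1(t)=AX_1(t)\,dt+F_1(t,X_1(t))\,dt+G_1(t,X_1(t))\,dW(t).
\end{equation*}
Since $F_1,G_1\in\StAP{2}$ and inherit $\rm(H2)$--$\rm(H3)$ with the same constants $M$ and $K(\cdot)$ by hypothesis, Theorem~\ref{theo:main} (with the same $\theta_{\St},\theta'_{\St}$) furnishes a unique mild solution $X_1\in\CUB(\R,\ellp{2}(\prob,\h_2))$ that is almost periodic in $2$-UI distribution. Setting $X_2:=X-X_1$, the task reduces to proving $X_2\in\ER(\R,\ellp{2}(\prob,\h_2),\mu)$. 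I would write
\begin{equation*}
F(s,X(s))-F_1(s,X_1(s))=\bigl[F_1(s,X(s))-F_1(s,X_1(s))\bigr]+F_2(s,X(s)),
\end{equation*}
and symmetrically for $G$, then estimate $\expect\norm{X_2(t)}^2$ using Cauchy--Schwarz on the Bochner integral, It\^o's isometry on the stochastic integral, and the Lipschitz hypothesis $\rm(H3)$ (applied to $F_1$, $G_1$, and to $F_2=F-F_1$, $G_2=G-G_1$ with constant $2K(\cdot)$). The Lipschitz contributions produce a self-term of the form $C\int_{-\infty}^t e^{-\delta(t-s)}K^2(s)\expect\norm{X_2(s)}^2\,ds$ which will be absorbed by Gronwall, while the residual ``forcing'' piece reduces to an expression of the form
\begin{equation*}
C\!\int_{-\infty}^t\!\! e^{-2\delta(t-s)}K^2(s)\bigl(\expect\norm{F_2(s,X_1(s))}^2+\expect\norm{G_2(s,X_1(s))}^2\bigr)\,ds.
\end{equation*}

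The main obstacle is proving that the scalar functions $h_F(s):=(\expect\norm{F_2(s,X_1(s))}^2)^{1/2}$ and $h_G(s):=(\expect\norm{G_2(s,X_1(s))}^2)^{1/2}$ belong to $\StER{2}(\R,\R,\mu)$; this is a stochastic analogue of Theorem~\ref{thm:superposition_SPAP}. The idea is to exploit that $X_1$, being almost periodic in $2$-UI distribution, has a tight and $2$-uniformly integrable family of one-dimensional laws; combined with Lemma~\ref{lem:SPP-uniform-compact} and the Lipschitz property of $F_2$, this allows a compact-core truncation (as in the proof of Theorem~\ref{thm:superposition_SPAP}) reducing the estimate to finitely many functions $s\mapsto F_2(s,x_i)$, each of which lies in $\StER{2}$ by $\rm(H4)'$, plus a uniformly small remainder. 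Once $h_F,h_G\in\StER{2}(\R,\R,\mu)$ is secured, Lemma~\ref{lem:convolut} converts the forcing integral into an element of $\ER(\R,\R,\mu)$, and a Gronwall-type argument patterned on \cite[Lemma~3.3]{KMRF12averaging}, together with condition $(\mathbf{H})$ on $\mu$, propagates the $\mu$-ergodicity to $\expect\norm{X_2(\cdot)}^2$. Finally, $2$-uniform integrability of $(\norm{X(t)}^2)_{t\in\R}$ is inherited from that of $X_1$ and the $\ellp{2}$-boundedness of $X_2$, so $X\in\PAPD^2(\R,\h_2)$ as required.
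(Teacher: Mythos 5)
Your proposal is correct and shares most of its architecture with the paper's proof: the same contraction argument for assertion~1, the same auxiliary equation driven by $(F_1,G_1)$ whose solution $Y=X_1\in\APD^2(\R,\h_2)$ is supplied by Theorem~\ref{theo:main}, and essentially the same treatment of the forcing terms $F_2(\cdot,Y(\cdot))$ and $G_2(\cdot,Y(\cdot))$ --- tightness of the laws of $Y$, a compact-core truncation reducing matters to finitely many points $y_i$ where $\rm(H4)'$ gives Stepanov $\mu$-ergodicity, and uniform integrability to control the remainder; this is exactly the paper's estimate of $I_1(r)$, $I_2(r)$, $I_3(r)$. Where you genuinely diverge is in how the self-referential part of the estimate for $X_2=X-Y$ is closed. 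The paper applies no Gronwall inequality here: it realizes $X$ as the limit of the Picard iterates $X_0=Y$, $X_{n+1}=\Gamma(X_n)$, proves by induction that each $Z_n=X_n-Y$ lies in $\ER\bigl(\R,\ellp{2}(\prob,\h_2),\mu\bigr)$ (the Lipschitz terms $J_1,J_2$ are handled by Lemma~\ref{lem:convolut} using the induction hypothesis $Z_n\in\ER$), and concludes by closedness of $\ER$ under the uniform convergence $X_n\rightarrow X$, which only requires $\theta_{\St}<1$. You instead run a direct Gronwall argument on $\expect\norm{X_2(t)}^2$, which works under $\theta'_{\St}<1$ via the H\"older--convexity device of Step~2 of Theorem~\ref{theo:main}, provided you also verify that the exponential convolution of a bounded function of $\ER(\R,\R,\mu)$ is again in $\ER(\R,\R,\mu)$ (translation invariance under Condition (\textbf{H}), as in the proof of Lemma~\ref{lem:convolut}). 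Your route is more direct but leans on this extra stability property and on the Gronwall smallness; the paper's route trades that for an induction plus a limit passage. Two small points to tidy: your decomposition leaves $F_2(s,X(s))$ rather than $F_2(s,Y(s))$ in the forcing term, so the further split $F_2(s,X(s))=\bigl[F_2(s,X(s))-F_2(s,X_1(s))\bigr]+F_2(s,X_1(s))$ with Lipschitz constant $2K(\cdot)$ must be performed before the superposition step (you implicitly acknowledge this); and the factor $K^2(s)$ in your displayed forcing integral is spurious --- the drift contribution carries only $e^{-\delta(t-s)}$ after Cauchy--Schwarz and the diffusion contribution $e^{-2\delta(t-s)}$ by It\^o's isometry --- though keeping it would only make Lemma~\ref{lem:convolut} apply verbatim.
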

The proof of this theorem is inspired from \cite[Theorem 4.4]{bed-chal-mel-prf-sma2015}. Before giving it, let us first revisit the hypotheses in \cite[Theorems~4.4 and 4.3 ]{bed-chal-mel-prf-sma2015}  in the context of ($\mu$-pseudo) almost periodicity (resp.~$\mu$-pseudo almost automorphy).
\begin{remark}\begin{enumerate}{\em
 \item It should be noted that assuming that $F_1$ and $G_1$ satisfy the same growth and Lipschitz conditions as $F$ and $G$ respectively, as was done in \cite[Theorem~4.4]{bed-chal-mel-prf-sma2015}, is not necessary since these properties can be deduced from $\rm{(H2)}$ and $\rm{(H3)}$ imposed on both $F$ and $G$. Indeed, assume for instance that $F$ is $L$-Lipchitz (the reasoning is similar for the growth condition). Set, for all fixed $x,y \in \h_1$, and $t\in \R$, $\hat F(t,x,y):=\norm{F(t,x)-F(t,y)}_{\h_2}$. From the following rewriting of $\hat F(t,x,y)$: $\hat F(t,x,y)=\hat F_1(t,x,y)+ \CCO{\hat F(t,x,y)-\hat F_1(t,x,y)}$, and the $\mu$-ergodicity of the mapping
\begin{equation*}
\left[t\mapsto H(t,x,y):=\hat F(t,x,y)-\hat F_1(t,x,y)\right],
\end{equation*} we have $\hat F(.,x,y)\in \PAP(\R,\h_2,\mu)$. In view of the uniqueness of the previous decomposition (under Condition (\textbf{H})), we deduce that $$\accol{\hat F_1(t,x,y),\,t\in \R}\subset\overline{\accol{\hat F(t,x,y),\,t\in \R}}$$ (the closure of the range of $\hat F$). Consequently, for all $t\in \R$, $$\hat F_1(t,x,y)\leq \sup_{t\in \R} \hat F(t,x,y)\leq L \norm{x-y}_{\h_2}$$ from which we conclude that $F_1$ is $L$-Lipschitz. The same conclusion holds for $G_1$.
  \item Compared to the uniformity imposed on the almost periodicity of the parametric functions in \cite[Theorem~4.7]{bed-chal-mel-prf-sma2015} and \cite[Theorem~3.1]{KMRF12averaging}, which occurs with respect to the second variable in bounded subsets of $\h_2$, that imposed here, namely $F_1$ and $G_1$ are (Stepanov) almost periodic with respect to the second variable in compact subsets of $\h_2$, is clearly weaker.}
\end{enumerate}
\end{remark}

\proofof{Theorem \ref{theo:main3}}
The existence, uniqueness and properties of the mild solution $X$ to \eqref{eq:SDE} can be obtained using the same arguments as in  Theorem
\ref{theo:main}, that is, the classical method of the fixed point theorem for the
contractive operator $\Gamma$ on
$\CUB\bigl(\R,\ellp{2}(\prob, \h_2)\bigl)$
defined by
$$\Gamma X(t) = \int^{t}_{-\infty}T(t-s)F\bigl(s, X(s)\bigl)ds +
\int^{t}_{-\infty}T(t-s)G\bigl(s, X(s)\bigl)dW(s).$$

To show that $X$ is almost periodic in $2$-UI distribution, and therefore $X$ has a decomposition $X = Y + Z$ with $Y\in \APD^2(\R,\h_2)$  and $
Z\in\ER(\R,\ellp{2}(\esprob,\prob,\h_2),\mu)$, we proceed as in \cite{bed-chal-mel-prf-sma2015}. Let
$Y\in\CUB\bigl(\R,\ellp{2}(\prob, \h_2)\bigl)$ be the unique
almost periodic in $2$-UI distribution
mild solution to
\begin{equation}\label{eq:edsY}
dY(t)=
AY(t)\,dt + F_1(t, Y(t))\,dt + G_1(t, Y(t))\,dW(t), \ t\in\R.
\end{equation}
The existence and properties of $Y$ are guaranteed by Theorem
\ref{theo:main}.
The process $Y$ is thus the component almost periodic in $2$-UI distribution of $X$. To construct the $\mu$-ergodic component, namely, $Z$, we exploit the fact that  $X$, defined by \eqref{eq:mildsol}, is the limit in $\CUB\bigl(\R,\ellp{2}(\prob, \h_2)\bigl)$
of a sequence $(X_n)$ with  arbitrary $X_0$ and, for every $n$, $X_{n+1}=\Gamma(X_n)$.
Let us take a particular sequence, namely
$$
X_0=Y, \
X_{n+1}=\Gamma(X_n),\ Z_n=X_n-Y,\ n\in\N.
$$
(The first term is provided by the solution to Eq. \eqref{eq:edsY}). Let us prove by induction that each $Z_n$ is in
$\ER\bigl(\R,\ellp{2}(\prob,\h_2),\mu\bigl)$.  
We use some arguments of the proof of
\cite[Theorem 5.7]{blot-cieutat_ezzinbi2012}.

We have, for every $n\in\N$ and every $t\in\R$,
\begin{align*}
Z_{n+1}(t)=&\Gamma (X_n)(t)-Y(t)\\
=&\int_{-\infty}^t T(t-s)\bigl( F(s,X_n(s))-F(s,Y(s))\bigr)\,ds\\
&+\int_{-\infty}^t T(t-s)\bigl( G(s,X_n(s))-G(s,Y(s))\bigr)\,dW(s)\\
&+\int_{-\infty}^t T(t-s)\bigl( F(s,Y(s))-F_1(s,Y(s))\bigr)\,ds\\
&+\int_{-\infty}^t T(t-s)\bigl( G(s,Y(s))-G_1(s,Y(s))\bigr)\,dW(s)\\
=&\int_{-\infty}^t T(t-s)\bigl( F(s,X_n(s))-F(s,Y(s))\bigr)\,ds\\
&+\int_{-\infty}^t T(t-s)\bigl( G(s,X_n(s))-G(s,Y(s))\bigr)\,dW(s)\\
&+\int_{-\infty}^t T(t-s)F_2(s,Y(s))\,ds +\int_{-\infty}^t T(t-s)G_2(s,Y(s))\,dW(s)\\
=&J_1(t)+J_2(t)+J_3(t).
\end{align*}
Assume that $Z_{n}\in\ER\bigl(\R,\ellp{2}(\prob,\h_2),\mu\bigl)$. Since $\ER\bigl(\R,\ellp{2}(\prob,\h_2),\mu\bigl)\subset
\StER{q}\bigl(\R,\ellp{2}(\prob,\h_2),\mu\bigl)$, we have by the Lipschitz condition $\rm(H3)$,
$$\bigl(\expect\norm{F(s,X_n(s))-F(s,Y(s)}^2\bigr)^{1/2}
\leq K(s)\bigl(\expect\norm{Z_n(s)}^2\bigr)^{1/2}, \, \forall s\in \R.$$
The same inequality holds for $G$. Thus, using Hölder's inequality and Lemma \ref{lem:convolut}, we get
\begin{multline*}
\frac{1}{\mu([-r,r])}\int_{[-r,r]}
  \biggl(\expect \norm{J_1(s)}^2
     \biggr)^{1/2} d\mu(t)\\
     \begin{aligned}
     &\leq \frac{1}{\mu([-r,r])}\int_{[-r,r]}
  \biggl(\expect \norm{
    {\int_{-\infty}^t T(t-s)\bigl( F(s,X_n(s))-F(s,Y(s))\bigr)\,ds}}^2
     \biggr)^{1/2} d\mu(t)\\
&\leq  \frac{1}{(\delta)^{1/2}}\frac{1}{\mu([-r,r])}\int_{[-r,r]}
           \CCO{\int_{-\infty}^t e^{-\delta (t-s)}K^{2}(s) \expect\norm{Z_n(s)}^2\,ds
        }^{1/2}d\mu(t)\\
&\rightarrow 0\text{ when }r\rightarrow +\infty,
\end{aligned}
\end{multline*}
and
\begin{multline*}\frac{1}{\mu([-r,r])}\int_{[-r,r]}
  \biggl(\expect \norm{J_2(s)}^2
     \biggr)^{1/2} d\mu(t)\\
     \begin{aligned}
     &\leq\frac{1}{\mu([-r,r])}\int_{[-r,r]}
  \biggl(\expect\norm{
    {\int_{-\infty}^t T(t-s)\bigl( G(s,X_n(s))-G(s,Y(s))\bigr)\,dW(s)}}^2
     \biggr)^{1/2} d\mu(t)\\
&\leq  (\trace Q)^{1/2}\frac{1}{\mu([-r,r])}\int_{[-r,r]}
           \CCO{\int_{-\infty}^t e^{-2\delta (t-s)} K^{2}(s) \expect\norm{Z_n(s)}^2\,ds
        }^{1/2}d\mu(t)\\
&\rightarrow 0\text{ when }r\rightarrow +\infty.
\end{aligned}
\end{multline*}
Hence, $J_1(.)$ and $J_2(.)$  are in $\ER\bigl(\R,\ellp{2}(\prob,\h_2),\mu\bigl)$.
Now, by using the same reasoning as in \cite[page 36]{bed-chal-mel-prf-sma2015}, let us prove that
$J_3(.)$  is in
$\ER\bigl(\R,\ellp{2}(\prob,\h_2),\mu\bigl)$.
The almost periodicity in distribution of $Y$ implies that the family
$({Y}(u+.))_{u\in\R}$ is uniformly tight in
$C_k(\R,\h_2)$. Therefore, for each $\epsilon>0$
there exists a compact subset $\mathcal{K}_\epsilon$ of
$C_k(\R,\h_2)$ such that, for every $u\in \R$,
$$\prob\accol{Y(u+.)\in\mathcal{K}_\epsilon}\geq 1-\epsilon.$$
Then, there exists a compact subset $K_\epsilon$ of $\h_2$ such that, for every $u,t\in \R$,
\begin{equation*}
\prob\accol{(\forall s\in[t,t+1]);\ Y(u+s)\in K_{\epsilon}}\geq 1-\epsilon.
\end{equation*}
For $u=0$, we obtain,
\begin{equation}\label{eq:Ytendu}
\prob\accol{(\forall s\in[t,t+1]);\ Y(s)\in K_{\epsilon}}\geq 1-\epsilon.
\end{equation}
Let $\Omega_{\epsilon,t}$ be the
measurable subset of $\Omega$ on which \eqref{eq:Ytendu} holds.
By compactness of $K_{\epsilon}$,
we can find a finite sequence
$y_1,\dots,y_{n(\epsilon)}$ such that
$$K_{\epsilon} \subset \cup_{i=1}^{n(\epsilon)}B(y_i, \epsilon), $$
and we get, using \eqref{eq:Ytendu}, for every $t\in \R$,
\begin{equation}\label{eq:uniform}
 \sup_{s\in [t,t+1]}\expect\biggl( \min_{1\leq i\leq
  n(\epsilon)}\bigl(1_{\Omega_{\epsilon,t}}\norm{Y(s)-y_i}^2\bigr)\biggr)<\epsilon.
\end{equation}
It is straightforward to see that $F_2=F-F_1$ and $G_2=G-G_1$ satisfy conditions similar to $\rm(H2)$ and $\rm(H3)$.
We have then by Itô's isometry
\begin{multline*}
\frac{1}{\mu([-r,r])}\int_{[-r,r]}
  \expect\biggl\Vert\int_{-\infty}^t T(t-s)F_2(s,Y(s))ds+\int_{-\infty}^t T(t-s)G_2(s,Y(s))\,dW(s)\biggr\Vert^2 d\mu(t)\\
\begin{aligned}
\leq&\frac{2\delta^{-1}}{\mu([-r,r])}\int_{[-r,r]}
 \int_{-\infty}^t e^{-\delta (t-s)}\expect\norm{F_2(s,Y(s))}^2 ds
        d\mu(t) \\
    &+\frac{2\trace Q}{\mu([-r,r])}\int_{[-r,r]}
  \int_{-\infty}^t
         e^{ -2\delta (t-s)}\expect\norm{G_2(s,Y(s))}^2\,dsd\mu(t)\\
        =& J_3^1(r)+J_3^2(r).
\end{aligned}
\end{multline*}
Let us deal with the term $J_3^1(r)$. We have
 \begin{multline*}
\begin{aligned}
         J_3^1(r) \leq&\frac{2\delta^{-1}}{\mu([-r,r])}\int_{[-r,r]}
 \CCO{\sum_{k=1}^{+\infty}\int_{t-k}^{t-k+1}e^{-\delta (t-s)}\expect\norm{F_2(s,Y(s))}^2 ds
        }d\mu(t)\\
\leq&\frac{4\delta^{-1}}{\mu([-r,r])}\int_{[-r,r]}
\sum_{k=1}^{+\infty}e^{-\delta (k-1)}\int_{t-k}^{t-k+1}
\expect\biggl(\min_{1\leq i \leq n}\bigl(\un{\Omega_{\epsilon,t}}\norm{F_2(s,Y(s))-F_2(s,y_{i})}^2\bigr)\biggr) ds
        d\mu(t) \\
    &+\max_{1\leq i \leq n}\frac{4\delta^{-1}}{\mu([-r,r])}\int_{[-r,r]}\CCO{\int_{-\infty}^t e^{-\delta (t-s)}
    \expect\big(\norm{F_2(s,y_{i})}^2\big) ds }d\mu(t) \\
    &+\frac{4\delta^{-1}}{\mu([-r,r])}\int_{[-r,r]}\CCO{\int_{-\infty}^t e^{-\delta (t-s)}
    \expect\big(\un{\Omega_{\epsilon,t}^{c}}\norm{F_2(s,Y(s))}^2\big) ds }d\mu(t) \\
    = & I_{1}(r)+I_{2}(r)+I_{3}(r).
\end{aligned}
\end{multline*}
Using the Lipschitz condition $\rm(H3)$ and the estimation \eqref{eq:uniform}, we obtain
\begin{align*}
  I_{1}(r) 
    &\leq \frac{4\delta^{-1}}{\mu([-r,r])}\int_{[-r,r]}
\CCO{\sum_{k=1}^{+\infty}e^{-\delta (k-1)}\sup_{t\in \R}\int_{t}^{t+1}K^2(s)
\expect\biggl(\min_{1\leq i \leq n}\bigl(\un{\Omega_{\epsilon,t}}\norm{Y(s))-y_{i}}^2\bigr)\biggr) ds
        }d\mu(t) \\
    &\leq \frac{4\delta^{-1}\norm{K}^2_{\St^2}}{1-e^{-\delta}}\epsilon.
\end{align*}
Thanks to the ergodicity of $F_2$ and Lebesgue's dominated convergence theorem, one obtains,
for any $r>0$,
\begin{align*}
  I_{2}(r) &= \max_{1\leq i \leq n}\frac{4\delta^{-1}}{\mu([-r,r])}\int_{[-r,r]}\CCO{\int_{-\infty}^t e^{-\delta (t-s)}
    \expect\big(\norm{F_2(s,y_{i})}^2\big) ds }d\mu(t) \\
    &\leq \max_{1\leq i \leq n}\frac{4\delta^{-1}}{\mu([-r,r])}\int_{[-r,r]}
\CCO{\sum_{k=1}^{+\infty}e^{-\delta (k-1)}\int_{t-k}^{t-k+1}
\expect\bigl(\norm{F_2(s,y_{i})}^2\bigr) ds
        }d\mu(t)  \\
    &= \max_{1\leq i \leq n}\frac{4\delta^{-1}}{\mu([-r,r])}\int_{[-r,r]}
\CCO{\sum_{k=1}^{+\infty}e^{-\delta (k-1)}\int_{t-k}^{t-k+1}
\expect\bigl(\norm{F_2(s,y_{i})}^2\bigr) ds
        }d\mu(t)  \\
    &= \sum_{k=1}^{+\infty}e^{-\delta (k-1)}\max_{1\leq i \leq n}\frac{4\delta^{-1}}{\mu([-r,r])}\int_{[-r,r]}
\CCO{\int_{0}^{1}
\expect\bigl(\norm{F_2(t-k +s,y_{i})}^2\bigr) ds
        }d\mu(t).
\end{align*}
Arguing as in the proof of Lemma~\ref{lem:convolut}, we deduce that $\lim_{r\rightarrow\infty}I_{2}(r)=0$.
On the other hand, by  Condition $\rm(H2)$ and the uniform integrability of the family $(\norm{Y(t)}^2)_{t\in \R}$, we get
\begin{align*}
  I_{3}(r) &= \frac{4\delta^{-1}}{\mu([-r,r])}\int_{[-r,r]}\CCO{\int_{-\infty}^t e^{-\delta (t-s)}
    \expect\big(\un{\Omega_{\epsilon,t}^{c}}\norm{F_2(s,Y(s))}^2\big) ds }d\mu(t)\\
    &\leq  \frac{4M\delta^{-1}}{\mu([-r,r])}\int_{[-r,r]}\CCO{\int_{-\infty}^t e^{-\delta (t-s)}
    \expect\big(\un{\Omega_{(\epsilon,t)}^{c}}(1+\norm{Y(s))})^2\big) ds }d\mu(t) \\
    &\leq \frac{8M\delta^{-1}}{\mu([-r,r])}\int_{[-r,r]}\CCO{\sum_{k=1}^{+\infty}e^{-\delta (k-1)}
   \biggl( \prob\CCO{\Omega_{\epsilon,t}^{c}}+\int_{t-k}^{t-k+1}\expect\bigl(\un{\Omega_{\epsilon,t}^{c}}
   \norm{Y(s))}^2\bigr) ds\biggr) }d\mu(t) \\
    &\leq \frac{8M\delta^{-1}\epsilon}{1-e^{-\delta}}.
\end{align*}
As $\epsilon$ is arbitrary, we  deduce from the previous estimations on $I_{1}(r),I_{2}(r)$ and $I_{3}(r)$, that $\lim_{r\rightarrow\infty}J_{3}^1(r)=0$.
In the same way, we  can estimate the term $J_{3}^2(r)$. We then easily see that $\lim_{r\rightarrow\infty}J_{3}^2(r)=0$. Which shows that
$J_3(.)$  is in $\ER\bigl(\R,\ellp{2}(\prob,\h_2),\mu\bigl)$.
Consequently, $Z_{n+1}\in \ER\bigl(\R,\ellp{2}(\prob,\h_2),\mu\bigl)$.

So we have shown that the sequence  $(Z_n)$
lies in
$\ER\bigl(\R,\ellp{2}(\prob,\h_2),\mu\bigl)$.

Now, the sequence $(X_n)$ converges to $X$ in
$\CUB\bigl(\R,\ellp{2}(\prob, \h_2)\bigl)$,
thus
$(Z_n)$ converges to $Z:=X-Y$ in
$\CUB\bigl(\R,\ellp{2}(\prob, \h_2)\bigl)$. Using the same calculations as in \cite[page 1150]{bed-chal-mel-prf-sma2015}, we obtain that
$Z\in\ER\bigl(\R,\ellp{2}(\prob,\h_2),\mu\bigl)$.
\finpr
\section{Comments and concluding remarks}\label{sec:conslusion}
When the function $g$ in \eqref{eq:SDE} is equal to zero, we retrieve a semilinear (deterministic) differential equation in the Banach space $\ellp{2}(\prob,\h_2)$:
\begin{equation}\label{eq:EDA_g=0}
    u'(t) = Au(t) + f(t, u(t)),\, t \in \R.
\end{equation}
An extensive literature (see e.g. \cite{Ding-Long-N'guerekata2011,long_composition_2011,li_stepanov-like_2011,zhao_new_2011}), is devoted to the problem of existence and uniqueness of a bounded ($\mu$-pseudo) almost periodic mild solution to \eqref{eq:EDA_g=0} in a Banach space $\espX$.  The adopted approach is based on superposition theorems in the Banach space  $\StAP{p}(\R,\espX)$ (or $\StPAP{p}(\R,\espX,\mu)$) combined with the Banach's fixed-point principle, applied to the nonlinear operator $$(\Gamma u)(t)=\int_{-\infty}^t T(t-s)f(s,u(s))ds.$$
To our knowledge,  all existing results use the fact that $\Gamma$
maps $\AP(\R,\espX)$ into itself, but $\Gamma$ does not map
  $\StAP{p}(\R,\espX)$ into $\AP(\R,\espX)$ nor into
  $\StAP{p}(\R,\espX)$ (see in
  particular~\cite{Ding-Long-N'guerekata2011,long_composition_2011,li_stepanov-like_2011,zhao_new_2011}). The
  proposed proofs may be summarized as follows: if $u\in
  \AP(\R,\espX)$, then $u$ satisfies the compactness condition (Com)
  of Subsection \ref{subsec:superposition}, and $u\in
  \StAP{p}(\R,\espX)$. From the existing superposition theorems (see
  e.g.~\cite[Theorem 2.1]{Ding-Long-N'guerekata2011}) combined with
  Condition (Lip) of Subsection
    \ref{subsec:superposition},
  it follows that $F(.):=f(.,u(.))\in
  \StAP{p}(\R,\espX)$, and then $(\Gamma
  u)(.)=\int_{-\infty}^. T(.-s)F(s)ds \in \AP(\R,\espX)$. This
obviously shows the existence (and uniqueness) of an almost periodic
mild solution to \eqref{eq:EDA_g=0}, but it does not exclude the
possibility of existence of a purely Stepanov almost periodic
solution. The main difficulty in showing the nonexistence of a purely
Stepanov almost periodic bounded solution with the tools used in the
literature (see for example \cite{Ding-Long-N'guerekata2011}), arises
from the imposed compactness condition (Com) in the superposition
theorem of Stepanov almost periodic functions, which
  seems strong enough in $\StAP{p}(\R,\espX)$.  Thanks to Theorem
\ref{thm:theorem-composition}, it is easy to see that under Condition
(Lip), the operator $\Gamma$ maps $\StAP{p}(\R,\espX)$ into $\AP(\R,\espX)$. This shows that the obtained bounded solution cannot be purely Stepanov almost periodic.

Now, we focus on the following problem: can we expect a
similar conclusion if we replace the assumption that $f$ is Stepanov
almost periodic by the assumption that $f$ is Stepanov almost periodic in
Lebesgue measure?  Obviously, the answer depends on the
Bohl-Bohr-Amerio Theorem, see e.g.~\cite[p.~80]{Levitan_Zhikov82}, for
functions which are Stepanov almost periodic in Lebesgue measure.  What we need
is to solve first another problem, namely, does the boundedness of the
indefinite integral of a function which is Stepanov almost periodic in Lebesgue measure imply its Bohr almost-periodicity? The answer to this question is negative as shown by the following example:
\begin{example}\label{exple:Bohl-Bohr-Thm} {\em It is well-known that the Levitan function $H:\R\rightarrow\R$, defined by  $$H(t)= \sin\CCO{\frac{1}{g(t)}},$$
where $$g(t)=2+\cos(t)+\cos(\sqrt{2}t)$$
is a bounded $\St^{p}$-almost periodic function but not Bohr-almost periodic (see e.g. \cite{Levitan_Zhikov82}). Let's denote by $h$ its derivative. We have
\begin{equation}\label{eq:h}
  h(t)=\cos\CCO{\frac{1}{g(t)}}\biggl(\frac{\sin(t)+\sqrt{2}\sin(\sqrt{2}t)}{g^{2}(t)}\biggr).
\end{equation}
By the Bohl-Bohr Theorem, the derivative function $h$ cannot be Stepanov almost periodic (see \cite{andres_stepanov_2012}), and consequently, the function $1/g$ cannot be Stepanov almost periodic. But one can easily observe by taking into account 2.~in Remark~\ref{rem:metric property of S_D}, that $h$ is in $\StAP{0}(\R)$ as a product of $\St^{0}$-almost periodic functions.}
\end{example}

In order to answer the first problem, we  give, in the following,  a simple affine scalar equation
with purely Stepanov almost periodic in Lebesque measure coefficient. We see that the unique  bounded solution is not Bohr-almost periodic (but it is purely Stepanov almost periodic).

\begin{example}{\em
Consider the affine differential equation
\begin{equation}\label{deteministe edo}
x'(t)=-x(t)+h(t), t\in\R,
\end{equation}
where $h$ is given by \eqref{eq:h}. The unique bounded solution to \eqref{deteministe edo} is given by:
\begin{eqnarray*}
 x(t)=\int^{t}_{-\infty} e^{(s-t)}h(s)ds
   = \sin\CCO{\frac{1}{g(t)}}+ \int^{t}_{-\infty} e^{(s-t)}\sin\CCO{\frac{1}{g(s)}}ds.
\end{eqnarray*}
  The boundedness of $x$ follows from
$$|x(t)|\leq \abs{\sin\CCO{\frac{1}{g(t)}}}+ \int^{t}_{-\infty} e^{(s-t)}\abs{\sin\CCO{\frac{1}{g(s)}}}ds\leq 2,\ \forall t\in\R.$$
But $x$ is not Bohr-almost periodic, as it is the sum of the purely Stepanov almost periodic function $H(t)=\sin\CCO{\frac{1}{g(t)}}$ and a Bohr almost periodic function. } \end{example}

In comparison to the consequences obtained by Andres and Pennequin \cite[Consequence~1, p.~1667 and Consequence~4, p.~1679]{andres_stepanov_2012}, this example shows (in addition) that one can obtain the existence and the uniqueness of a bounded purely Stepanov almost periodic solution when the coefficients are purely Stepanov almost periodic in  Lebesgue measure.

This simple result can open new directions about the problem of existence of purely Stepanov almost periodic solutions, in both stochastic and deterministic cases.
%

\def\cprime{$'$} \def\cprime{$'$}

\end{document}